\documentclass[final,1p,times,nopreprintline]{elsarticle}
\usepackage{amsmath,amssymb,amsthm,mathtools}
\usepackage{booktabs}
\usepackage{microtype}
\usepackage[colorlinks,linkcolor=blue,citecolor=blue,urlcolor=blue]{hyperref}

\theoremstyle{plain}
\newtheorem{theorem}{Theorem}[section]
\newtheorem{proposition}[theorem]{Proposition}
\newtheorem{lemma}[theorem]{Lemma}
\newtheorem{corollary}[theorem]{Corollary}
\theoremstyle{definition}
\newtheorem{definition}[theorem]{Definition}
\newtheorem{remark}[theorem]{Remark}
\newtheorem{example}[theorem]{Example}

\newcommand{\R}{\mathbb{R}}
\newcommand{\E}{\mathbb{E}}
\newcommand{\PP}{\mathbb{P}}
\newcommand{\Var}{\operatorname{Var}}
\newcommand{\MR}{M_{R}}
\newcommand{\cstar}{c_*}

\begin{document}

\begin{frontmatter}

\title{Feasible Frontiers for Sub-Gamma Envelopes: the Variance--Pole Trade-off for
Infinitely Divisible Laws}

\author[zstu]{Yichuan Chen\corref{cor1}\fnref{equal}}
\ead{smallboychen@gmail.com}

\author[zstu]{Xin Wang\fnref{equal}}
\ead{wxcaup@zstu.edu.cn}
\cortext[cor1]{Corresponding author.}
\fntext[equal]{These authors contributed equally to this work.}

\affiliation[zstu]{organization={School of Art and Design, Zhejiang Sci-Tech University},
            city={Hangzhou},
            postcode={310018},
            country={China}}

\begin{abstract}
A right sub-gamma bound is described by a quadratic proxy $v$ and a pole $c$, and the pair
is not unique: enlarging either preserves it. Fixing $v$ at the variance $V$ removes the
ambiguity at quadratic order but is a convention, not a consequence. For centered
infinitely divisible laws with finite nonzero variance we determine the entire boundary of
the feasible set, the map $v\mapsto\cstar(v)$. A Beta$(1,2)$ multiplier applied to the
normalised Kolmogorov canonical measure turns feasibility into a one-dimensional
comparison and yields an exact variational formula: the frontier is convex, nonincreasing,
and its feasible set is convex. The abscissa of convergence of the moment generating
function imposes a $v$-insensitive floor on $\cstar$, so a law whose variance-exact pole
sits on it has a flat frontier, whereas the third-cumulant obstruction exists only at
$v=V$. Whenever that pole lies strictly above both floors the frontier drops strictly as
soon as $v>V$; if the control is also purely local with
$\kappa_3^2/(9V^2)>\kappa_4/(12V)$, the drop has a square-root profile, and in the
remaining boundary case a cube-root profile, with derivative $-\infty$ at $V$ either way. We then characterise when the optimised frontier reproduces the exact Chernoff
deviation: equality holds at a level exactly when some pair on the frontier is tangent to
the cumulant generating function at a Legendre maximiser for that level. At the levels tabulated in a worked example $v=V$ costs four to thirty percent; for a
centered exponential a gap persists at every level, reaching thirteen percent.
\end{abstract}

\begin{keyword}
Infinitely divisible distribution \sep sub-gamma inequality \sep optimal scale \sep
variance proxy \sep Chernoff bound \sep tempered stable distribution
\MSC[2020] 60E07 \sep 60E15 \sep 60G51
\end{keyword}

\end{frontmatter}

\section{Introduction}

A centered random variable $X$ is right sub-gamma with variance factor $v>0$ and pole
$c\ge0$ if
\begin{equation}\label{eq:sg}
  \log\E e^{tX} \;\le\; \frac{v t^2}{2(1-ct)},
  \qquad 0\le t<1/c,
\end{equation}
with $1/0=\infty$. The bound is the standard route to Bernstein-type tail estimates
\cite{BLM2013} and the pair $(v,c)$ is the usual summary of a light-tailed law. It is
however not intrinsic. Increasing either coordinate preserves \eqref{eq:sg}, so a
convenient pair may be far from sharp, and a statement of the form ``$X$ is sub-gamma with
parameters $(v,c)$'' carries no information until one coordinate is pinned down.

The natural normalisation is $v=V:=\Var(X)$. It is forced at quadratic order, since
$\log\E e^{tX}=Vt^2/2+o(t^2)$ near the origin, and it leaves a single number to be
determined: the smallest compatible pole. This variance-exact problem has been solved for
several classes of laws \cite{Chen2026a,ChenWang2026a,ChenWang2026b}, where the answer is
governed by an ageing-type comparison in the sense of Klar and M\"uller
\cite{KlarMuller2003}, in parallel with
optimal variance-proxy problems for other envelopes
\cite{MarchalArbel2017,Arbel2020,Barreto2026,LV2026}
and with variance-matched scale optimisation for particular scalar families
\cite{Skorski2023,AMA2025}. The quantity $\kappa_3/(3V)$ that appears below as the local
obstruction is the same one that \cite{Skorski2023} identifies, for beta laws, as the
optimal Bernstein scale: there the scale is reported as $c=\kappa_3/V$ and enters the tail
bound as $c\epsilon/3$, which is our pole.

Pinning $v$ at $V$ is nevertheless a convention. The local expansion forces $v\ge V$; it
does not say that $v=V$ is the best place to stand. The purpose of this paper is to
describe the whole trade-off. Writing
\[
  \mathcal F=\{(v,c): \eqref{eq:sg}\ \text{holds}\},
  \qquad
  \cstar(v)=\inf\{c\ge0:(v,c)\in\mathcal F\},
\]
we determine the function $\cstar$ on $[V,\infty)$ for centered infinitely divisible laws
with finite nonzero variance, and we show that the frontier, rather than the single point
$\cstar(V)$, is the object that governs concentration.

\subsection*{Method}

We use the finite-variance canonical representation of Kolmogorov
\cite{Kolmogorov1932,GnedenkoKolmogorov1968}; see also
\cite[Remark 8.4 and Corollary 25.8]{Sato1999} and \cite{SvH2004}. If $a$ is the
Gaussian variance and $\nu$ the L\'evy measure, then
\begin{equation}\label{eq:LKintro}
  K_X(t)=\log\E e^{tX}=\frac{at^2}{2}+\int_{\R\setminus\{0\}}\bigl(e^{tx}-1-tx\bigr)\,\nu(dx),
  \qquad
  V=a+\int_{\R\setminus\{0\}}x^2\,\nu(dx).
\end{equation}
Normalising the Kolmogorov canonical measure $a\delta_0+x^2\nu(dx)$ by $V$, multiplying by
an independent $\mathrm{Beta}(1,2)$ variable, and using the integral form of the
second-order Taylor remainder gives a variable $R$ with
\begin{equation}\label{eq:factintro}
  K_X(t)=\frac{Vt^2}{2}\,\MR(t),\qquad \MR(t)=\E e^{tR}.
\end{equation}
Neither \eqref{eq:LKintro} nor \eqref{eq:factintro} is new. Identity \eqref{eq:factintro}
combines Kolmogorov's form $K_X''(t)=\int e^{tx}H_X(dx)$, in which $H_X$ is finite of mass
$V$, with the integral form of the second-order Taylor remainder,
$e^z-1-z=\tfrac{z^2}{2}\E e^{zB}$ for $B\sim\mathrm{Beta}(1,2)$; it is recorded only to fix
notation. What is
new is that \eqref{eq:factintro} converts the two-parameter feasibility problem into a
one-parameter comparison with an exponential law of adjustable mass, which is what makes
the entire frontier computable.

\subsection*{Results}

Write $\theta=v/V$ and $\tau=\sup\{t>0:K_X(t)<\infty\}$.

\begin{enumerate}
\item[(i)] \emph{Exact frontier} (Theorem~\ref{thm:var}):
\[
  \cstar(v)=\max\Bigl\{0,\ \sup_{t>0}\frac1t\Bigl(1-\frac{\theta}{\MR(t)}\Bigr)\Bigr\},
\]
and, when the right-hand side is finite, the infimum is attained. The frontier is convex
and nonincreasing and $\mathcal F$ is convex (Theorem~\ref{thm:convex} with
Proposition~\ref{prop:degenerate}(i), which empties the fibres below $V$); when $\tau>0$, $\cstar$ is continuous at the endpoint $v=V$
(Proposition~\ref{prop:cont}); and on $[V,\infty)$ finiteness of $\cstar(v)$ is equivalent
to $\tau>0$ irrespective of $v$ (Proposition~\ref{prop:degenerate}).

\item[(ii)] \emph{The two endpoint obstructions behave in opposite ways.} The boundary
floor $\cstar(v)\ge1/\tau$ holds for every $v$, so a law whose variance-exact pole is
boundary controlled has a constant frontier: relaxing the proxy buys nothing
(Proposition~\ref{prop:floor}). The local obstruction $\kappa_3/(3V)$, a lower bound for
$\cstar(V)$ that is attained exactly under local control, is by contrast a knife edge: for
every $\theta>1$ the objective tends to $-\infty$ at the origin
(Proposition~\ref{prop:knife}). More generally, no control mechanism is needed for the
frontier to move: whenever the variance-exact pole lies strictly above both floors,
$\cstar(V)>\max\{0,1/\tau\}$, it drops strictly as soon as $v>V$
(Theorem~\ref{thm:drop}). The two obstructions are not mutually exclusive: under pure local
control, where $\cstar(V)=m_1$, the coincidence $m_1=1/\tau$ makes the frontier constant
(Example~\ref{ex:knifefloor}). Without that hypothesis $m_1=1/\tau$ implies nothing, since
$m_1$ does not determine $\cstar(V)$.

\item[(iii)] \emph{Square-root edge law} (Theorem~\ref{thm:edge}). Under pure local
control with $m_1>0$ strictly dominating the floor $1/\tau$, and with
$\kappa:=\kappa_3^2/(9V^2)-\kappa_4/(12V)>0$,
\[
  \cstar(v)=\frac{\kappa_3}{3V}-2\sqrt{\kappa\,(\theta-1)}+o\bigl(\sqrt{\theta-1}\bigr),
  \qquad \theta\downarrow1,
\]
with maximiser $\hat t_\theta\sim\sqrt{(\theta-1)/\kappa}$. In particular
$\cstar'(V^+)=-\infty$. The condition $3V\kappa_4>4\kappa_3^2$ of \cite{ChenWang2026a} is
exactly $\kappa<0$, which rules local control out, so under pure local control the only
remaining case is $\kappa=0$; there $\Psi_1(t)=m_1-\gamma t^2+o(t^2)$ and, when
$\gamma>0$, the profile is a cube root,
\[
  \cstar(v)=\frac{\kappa_3}{3V}-3\Bigl(\frac{\gamma(\theta-1)^2}{4}\Bigr)^{1/3}
  +o\bigl((\theta-1)^{2/3}\bigr),
  \qquad
  \gamma=\frac{\kappa_3\kappa_4}{18V^2}-\frac{\kappa_3^3}{27V^3}-\frac{\kappa_5}{60V},
\]
with maximiser $\hat t_\theta\sim\{(\theta-1)/(2\gamma)\}^{1/3}$
(Theorem~\ref{thm:cube}). Both are instances of one $\varepsilon^{j/(j+1)}$ law
(Remark~\ref{rem:general-edge}), and in both the one-sided derivative at $V$ is $-\infty$.
This case is not vacuous: the Skellam law at its variance-exact phase transition sits
exactly on it.

\item[(iv)] \emph{Decay} (Section~\ref{sec:asym}). With $t_\theta=\inf\{t>0:\MR(t)>\theta\}$
one has $\cstar(v)\le1/t_\theta$ always, and $\cstar(v)\sim1/t_\theta$ under a mild
regularity condition (Theorem~\ref{thm:asym}); for bounded positive jumps of radius $b$,
where $\tau=\infty$ is automatic, this
gives $\cstar(v)\sim b/\log(v/V)$ (Corollary~\ref{cor:bounded}). The convergence is slow and the implicit form should be
preferred in numerical work.

\item[(v)] \emph{No loss over the frontier} (Theorem~\ref{thm:noloss}). Let
$I(y)=\sup_t\{ty-K_X(t)\}$. Then
\[
  \inf_{v\ge V}\Bigl\{\sqrt{2vx}+\cstar(v)\,x\Bigr\}\;\ge\;I^{-1}(x),
\]
with equality at a level $x$ \emph{if and only if} some pair on the frontier is tangent to
$K_X$ at the Legendre maximiser for that level; for a positive pole this is the same as the
variational supremum being attained at that point. Where
equality holds, the envelope family loses nothing and the loss usually attributed to
Bernstein-type bounds is a loss of the single point $v=V$: in the example of
Section~\ref{sec:noloss} that point costs between four and thirty percent at the levels tabulated there, and under
the
hypotheses of (iii) it is strictly suboptimal at every positive deviation level. Where
equality fails the gap is real; for a centered exponential variable it reaches thirteen
percent (Example~\ref{ex:gaploss}). Which case occurs is not settled by the control
mechanism: within the tempered stable family both a boundary-controlled law with no loss
at any level and a boundary-controlled law with a genuine gap occur
(Example~\ref{ex:noloss-exact} and Example~\ref{ex:gaploss}).
\end{enumerate}

Section~\ref{sec:examples} treats Gamma and bilateral Gamma laws, one-sided tempered
stable and CGMY laws, compound Poisson laws with Gamma jumps, the centered Skellam family,
and a bounded two-point jump law. The frontier is obtained in closed form where it is
constant --- Gamma and bilateral Gamma, and one-sided tempered stable with $Y\ge-1$ --- and
numerically otherwise; for a bilateral CGMY law only the variance-exact pole is computed,
as a counterexample to the one-sided formula. Two by-products
deserve mention: the tempered stable variance-exact pole is
$\cstar(V)=M^{-1}\max\{1,(2-Y)/3\}$, which contains the compound Poisson Gamma-jump pole
of \cite{Chen2026a} and the Gamma scale as the cases $Y=-\alpha$ and $Y=0$; and the
Skellam phase transition at $p_\pm=(2\pm\sqrt3)/4$ found in \cite{ChenWang2026a} exists
only at $v=V$, since by (ii) local control cannot survive any relaxation of the proxy.

Deviation inequalities for infinitely divisible laws expressed directly through the L\'evy
measure go back to \cite{Houdre2002}, with related results for norms of infinitely
divisible vectors \cite{HMR2008} and for compound Poisson laws
\cite{KontoyiannisMadiman2006}; interpolation and covariance representations for
infinitely divisible variables go back to \cite{HPS1998}, and Stein's method gives a
further route \cite{ArrasHoudre2019,Barman2025}. Those methods control deviations; they do not
describe the feasible set of envelope parameters, which is the object here.

\section{Setting}\label{sec:setting}

Throughout, $X$ is a centered, nondegenerate, infinitely divisible random variable with
finite variance $0<V=\Var(X)<\infty$, Gaussian variance $a\ge0$ and L\'evy measure $\nu$,
so that \eqref{eq:LKintro} holds. The integrand $e^{tx}-1-tx$ is nonnegative, so
$K_X(t)\in[0,\infty]$ is defined for every real $t$; where finite it is the cumulant
generating function. Set
\begin{equation}\label{eq:tau}
  \tau=\sup\{t>0:K_X(t)<\infty\}\in[0,\infty].
\end{equation}

Let $H_X(dx)=a\delta_0(dx)+x^2\nu(dx)$, a finite measure of total mass $V$. Let
$Z\sim H_X/V$, let $B\sim\mathrm{Beta}(1,2)$ be independent of $Z$, and put
\begin{equation}\label{eq:R}
  R=BZ,\qquad \MR(t)=\E e^{tR}\in(0,\infty].
\end{equation}
The density of $B$ is $2(1-b)$ on $(0,1)$.

\begin{lemma}[Canonical factorization]\label{lem:fact}
For every $t\in\R$, with extended values allowed and both sides read by continuity at
$t=0$,
\begin{equation}\label{eq:fact}
  K_X(t)=\frac{Vt^2}{2}\,\MR(t).
\end{equation}
In particular $\MR(t)<\infty$ if and only if $K_X(t)<\infty$, so $\tau$ is also the
positive abscissa of convergence of $\MR$.
\end{lemma}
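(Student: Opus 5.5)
The plan is to verify \eqref{eq:fact} pointwise in $t$ by a single application of Tonelli's theorem, using only nonnegative integrands so that extended values cause no difficulty. First, for every real $z$ the integral form of the second-order Taylor remainder gives
\[
  e^z-1-z=z^2\int_0^1(1-s)e^{sz}\,ds=\frac{z^2}{2}\,\E e^{zB},\qquad B\sim\mathrm{Beta}(1,2),
\]
because the density of $B$ is $2(1-s)$. At $z=0$ both sides vanish, so the identity holds for all $z$, including the degenerate case.

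Fix $t\neq0$ and substitute $z=tx$ for $x\neq0$. This gives
\[
  e^{tx}-1-tx=\frac{t^2}{2}\,x^2\,\E e^{tBx}.
\]
Integrating against $\nu$, and adding the Gaussian contribution $at^2/2=\frac{t^2}{2}\,a\,\E e^{tB\cdot0}$, yields
\[
  K_X(t)=\frac{t^2}{2}\int_{\R}\E e^{tBx}\,H_X(dx).
\]
Every integrand here is nonnegative, so Tonelli's theorem lets us exchange the expectation over $B$ with the integral over $H_X$, with the value $+\infty$ allowed throughout. Since $H_X=V\cdot\mathrm{Law}(Z)$, the right-hand side equals $\frac{Vt^2}{2}\E e^{tBZ}=\frac{Vt^2}{2}\MR(t)$.

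At $t=0$ we have $K_X(0)=0$, and the right-hand side is $0\cdot\MR(0)=0$ with $\MR(0)=1$. The phrase ``read by continuity'' therefore only asks for consistency at $t=0$, and this is immediate.

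For the finiteness statement, note that for $t\neq0$ the factor $Vt^2/2$ lies in $(0,\infty)$, because $V>0$. Hence $K_X(t)<\infty$ if and only if $\MR(t)<\infty$. At $t=0$ both quantities are finite. So the two functions have the same sets of finiteness, and in particular $\sup\{t>0:\MR(t)<\infty\}=\tau$.

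The only point that needs care is the justification of the interchange without assuming any exponential moments. I will handle it entirely through Tonelli and nonnegativity, deliberately avoiding dominated convergence, so that the identity holds as an equality in $[0,\infty]$ for every $t$.
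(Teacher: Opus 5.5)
Your proof is correct and is the paper's argument: the Beta$(1,2)$ form of the second-order Taylor remainder, integration against $H_X=a\delta_0+x^2\nu(dx)$, and nonnegativity (Tonelli) to justify both the interchange and the equivalence of finiteness. Your separate handling of $t=0$, and your remark that $Vt^2/2\in(0,\infty)$ for $t\neq0$, spell out details the paper leaves implicit.
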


\begin{proof}
Direct integration gives $\E e^{zB}=2(e^z-1-z)/z^2$ for $z\ne0$, with value $1$ at $z=0$;
equivalently $e^{z}-1-z=z^2\E e^{zB}/2$, the integral form of the second-order Taylor
remainder. Conditioning on $Z$ and integrating against $H_X/V$,
\[
  \frac{Vt^2}{2}\E e^{tBZ}
  =\frac{t^2}{2}\int_\R \E e^{tBy}\,H_X(dy)
  =\frac{at^2}{2}+\int_{\R\setminus\{0\}}\bigl(e^{tx}-1-tx\bigr)\nu(dx),
\]
which is \eqref{eq:fact}. All integrands are nonnegative, so the interchange is legitimate
and the equality of finiteness follows.
\end{proof}

We use the conventions $1/0=\infty$, $1/\infty=0$ and $\theta/\infty=0$ for finite
$\theta$, and we write throughout
\begin{equation}\label{eq:psi}
  \theta=\frac{v}{V},\qquad
  \Psi_\theta(t)=\frac1t\Bigl(1-\frac{\theta}{\MR(t)}\Bigr),\quad t>0 .
\end{equation}
Thus $\Psi_\theta(t)=1/t$ whenever $\MR(t)=\infty$, and $\Psi_\theta(t)\le1/t$ always.

One structural fact about this family is used repeatedly below and is recorded here.

\begin{lemma}[Monotone penalty]\label{lem:monpen}
The map $t\mapsto t\MR(t)$ is nondecreasing on $(0,\infty)$, so $\phi(t):=\{t\MR(t)\}^{-1}$
is nonincreasing, with $\phi\equiv0$ on $(\tau,\infty)$ when $\tau<\infty$; at
$t=\tau$ itself $\phi$ may be positive, which is consistent with monotonicity. Consequently
\begin{equation}\label{eq:pendecomp}
  \Psi_\theta=\Psi_1-(\theta-1)\phi ,\qquad \theta\ge1 .
\end{equation}
\end{lemma}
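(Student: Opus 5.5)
The key observation is that, by Lemma~\ref{lem:fact}, $t\MR(t)=2K_X(t)/(Vt)$ on $(0,\tau)$, so the monotonicity of $t\mapsto t\MR(t)$ reduces to the monotonicity of $K_X(t)/t$. I would prove the latter directly from the Kolmogorov representation \eqref{eq:LKintro}. The Gaussian part contributes $at/2$, which is nondecreasing. Each L\'evy integrand contributes $(e^{tx}-1-tx)/t$, and I will show it is nondecreasing in $t>0$ for every fixed real $x$.

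For this I set $g(t)=e^{tx}-1-tx$. Then $g(0)=0$ and $g$ is convex in $t$, so $g(t)/t$, being the slope of a chord from the origin, is nondecreasing on $(0,\infty)$. Equivalently, $t\,g'(t)-g(t)=1+e^{tx}(tx-1)\ge0$, because $h(u)=1+e^u(u-1)$ has $h(0)=0$ and $h'(u)=ue^u$, which has the sign of $u$; hence $h\ge0$. Integrating against $\nu$ preserves monotonicity, since all integrands are nonnegative and the integrals are extended-valued. So $K_X(t)/t$ is nondecreasing as a $[0,\infty]$-valued function on all of $(0,\infty)$, including past $\tau$.

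To avoid dividing by infinity, I would instead argue on the $\MR$ side. From $\E e^{zB}=2(e^z-1-z)/z^2$ we get $t\,\E e^{tBy}=2g_y(t)/(t\,y^2)$ for $y\ne0$, which is nondecreasing in $t$ by the chord argument. For $y=0$ the integrand is $t$, also nondecreasing. Integrating over $Z\sim H_X/V$ by monotone comparison then gives that $t\MR(t)$ is nondecreasing in $[0,\infty]$. One caveat: $t\MR(t)>0$ for $t>0$ because $\MR>0$, so $\phi$ is well defined in $[0,\infty)$.

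The remaining consequences are routine. Since $t\MR(t)$ is nondecreasing and positive, $\phi$ is nonincreasing. For $t>\tau$ we have $K_X(t)=\infty$, so $\MR(t)=\infty$ by Lemma~\ref{lem:fact}, and $\phi(t)=0$ under the convention $1/\infty=0$. At $t=\tau$, $\MR(\tau)$ may be finite, so $\phi(\tau)>0$ is possible; this is consistent with monotonicity because $\phi$ only needs to be nonincreasing.

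For \eqref{eq:pendecomp}, I expand
\[
\Psi_\theta(t)=\frac1t-\frac{\theta}{t\MR(t)}=\frac1t-\frac1{t\MR(t)}-\frac{\theta-1}{t\MR(t)}=\Psi_1(t)-(\theta-1)\phi(t),
\]
which is valid with extended conventions since $\theta-1\ge0$ is finite and every term is finite (using $\theta/\infty=0$).

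The only delicate step is handling the extended values at and beyond $\tau$. Working with the nonnegative integrands $t\,\E e^{tBy}$ pointwise, rather than with the quotient $K_X/t$, settles this cleanly.
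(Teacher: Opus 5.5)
Your proposal is correct and is essentially the paper's argument. The paper applies the chord-slope-from-the-origin property once, to the convex function $K_X$ with $K_X(0)=0$, and reads $K_X(s)/s\le K_X(t)/t$ in extended values. You apply the same convexity to each integrand $e^{ty}-1-ty$ (equivalently to $t\,\E e^{tBy}$) and then integrate. That extra integrand-by-integrand step is sound but not needed.
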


\begin{proof}
By Lemma~\ref{lem:fact}, $t\MR(t)=2K_X(t)/(Vt)$. The function $K_X$ is convex with
$K_X(0)=0$, so for $0<s<t$ we have $K_X(s)\le(s/t)K_X(t)$, that is $K_X(s)/s\le K_X(t)/t$,
with extended values allowed. Identity \eqref{eq:pendecomp} is immediate from
\eqref{eq:psi}.
\end{proof}

\section{The feasible region and the exact frontier}\label{sec:frontier}

\begin{definition}\label{def:feasible}
A pair $(v,c)\in(0,\infty)\times[0,\infty)$ is \emph{feasible} for $X$ if
\begin{equation}\label{eq:env}
  K_X(t)\le\frac{vt^2}{2(1-ct)}\qquad\text{for every }0\le t<1/c .
\end{equation}
Let $\mathcal F=\mathcal F(X)$ be the set of feasible pairs and, for $v>0$,
\begin{equation}\label{eq:cstardef}
  \cstar(v)=\inf\{c\ge0:(v,c)\in\mathcal F\}\in[0,\infty],
\end{equation}
with $\inf\emptyset=\infty$. For $c=0$ condition \eqref{eq:env} is required for all
$t\ge0$.
\end{definition}

\begin{lemma}[Reduction]\label{lem:reduce}
For $v>0$ and $c\ge0$, the pair $(v,c)$ is feasible if and only if
\begin{equation}\label{eq:reduced}
  \MR(t)\le\frac{\theta}{1-ct}\qquad\text{for every }0<t<1/c .
\end{equation}
\end{lemma}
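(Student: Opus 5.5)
The plan is to substitute the canonical factorization of Lemma~\ref{lem:fact} into the envelope condition \eqref{eq:env} and divide by the positive quantity $Vt^2/2$. Take first $0<t<1/c$, with $t<\infty$ when $c=0$. There $1-ct>0$, and both sides of \eqref{eq:env} can be multiplied by the positive finite number $2/(Vt^2)$ without changing the inequality; extended values are allowed on the left. Since $K_X(t)=\tfrac{Vt^2}{2}\MR(t)$ by \eqref{eq:fact}, the inequality $K_X(t)\le vt^2/\{2(1-ct)\}$ becomes
\[
  \MR(t)\le\frac{v}{V(1-ct)}=\frac{\theta}{1-ct}.
\]
This holds with extended values: if $\MR(t)=\infty$ then $K_X(t)=\infty$, so both inequalities fail together. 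The two conditions are therefore equivalent pointwise for every $t\in(0,1/c)$.

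It remains to handle $t=0$, which \eqref{eq:env} includes but \eqref{eq:reduced} omits. At $t=0$ the left side of \eqref{eq:env} is $K_X(0)=\log\E e^{0}=0$ and the right side is $0$, so the inequality $0\le0$ always holds. Thus the $t=0$ constraint is vacuous, and \eqref{eq:env} on $[0,1/c)$ is equivalent to \eqref{eq:reduced} on $(0,1/c)$. The case $c=0$ is covered by the convention $1/0=\infty$: both conditions are then required for all $t>0$.

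There is no real obstacle here. The only care needed is in handling extended values, namely that $\MR(t)=\infty$ exactly when $K_X(t)=\infty$, which Lemma~\ref{lem:fact} gives, and in noting that the endpoint $t=0$ contributes nothing.
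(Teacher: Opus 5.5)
Your proposal is correct and is essentially the paper's own proof. Both note that the two sides of the envelope vanish at $t=0$, divide by $Vt^2/2$ for $0<t<1/c$ using the factorization \eqref{eq:fact}, and observe that the equivalence holds including when $K_X(t)=\MR(t)=\infty$.
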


\begin{proof}
Both sides of \eqref{eq:env} vanish at $t=0$. For $0<t<1/c$ divide by $Vt^2/2>0$ and use
\eqref{eq:fact}; the resulting inequalities are identical, including the cases where
either side is infinite.
\end{proof}

The right-hand side of \eqref{eq:reduced} is $\theta$ times the moment generating function
of an exponential variable with mean $c$. Feasibility is therefore a comparison of $\MR$
with a scaled exponential transform, and the frontier is the exact solution of that
comparison.

\begin{theorem}[Exact frontier]\label{thm:var}
For every $v>0$,
\begin{equation}\label{eq:varformula}
  \cstar(v)=\max\Bigl\{0,\ \sup_{t>0}\Psi_\theta(t)\Bigr\}.
\end{equation}
If the right-hand side is finite it is itself feasible; the infimum in
\eqref{eq:cstardef} is then attained and is a minimum.
\end{theorem}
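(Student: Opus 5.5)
The plan is to rewrite the reduced condition of Lemma~\ref{lem:reduce} as a family of lower bounds on $c$, one for each $t>0$, and then take the supremum. Fix $v>0$, $\theta=v/V$, and write $S=\sup_{t>0}\Psi_\theta(t)\in(-\infty,\infty]$.

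The first step is to show that for $c\ge0$ and $t>0$ with $ct<1$, the inequality $\MR(t)\le\theta/(1-ct)$ is equivalent to $c\ge\Psi_\theta(t)$. If $\MR(t)<\infty$, both $\MR(t)$ and $1-ct$ are positive, so the inequality reads $1-ct\le\theta/\MR(t)$, that is $ct\ge1-\theta/\MR(t)$, which is $c\ge\Psi_\theta(t)$. If $\MR(t)=\infty$, the inequality fails. Correspondingly $\Psi_\theta(t)=1/t$, so $c\ge\Psi_\theta(t)$ would force $ct\ge1$, contradicting $ct<1$. The equivalence therefore holds in both cases.

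The second step proves $\cstar(v)\ge\max\{0,S\}$. Let $(v,c)$ be feasible and fix $t>0$. If $t<1/c$, the first step gives $c\ge\Psi_\theta(t)$ directly. If $t\ge1/c$ (so $c>0$), then $\Psi_\theta(t)\le1/t\le c$, using the observation after \eqref{eq:psi}. Hence $c\ge S$ for every feasible $c$, and trivially $c\ge0$. Taking the infimum gives the lower bound, including the case $S=\infty$, where no $c$ is feasible and $\cstar(v)=\infty$.

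The third step proves attainment. Suppose $c_0:=\max\{0,S\}<\infty$. For every $t\in(0,1/c_0)$ we have $c_0\ge\Psi_\theta(t)$ and $c_0t<1$, so by the first step the reduced inequality \eqref{eq:reduced} holds at $t$. By Lemma~\ref{lem:reduce}, $(v,c_0)$ is feasible. When $c_0=0$ the range of $t$ is all of $(0,\infty)$, and the equivalence still applies with $1-ct=1$. So $c_0$ is feasible, $\cstar(v)\le c_0$, and together with the second step the infimum equals $c_0$ and is attained.

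The only delicate point, and the one I expect to take the most care, is the bookkeeping of extended values: the case $\MR(t)=\infty$, the region $t\ge1/c$ that the constraint does not see, and the degenerate case $c=0$. The relation $\Psi_\theta\le1/t$ together with the convention $\theta/\infty=0$ handles all three uniformly. No continuity or monotonicity of $\MR$ is needed, since the argument is pointwise in $t$.
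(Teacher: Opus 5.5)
Your proof is correct and follows the paper's argument essentially step for step. The paper likewise handles $t\ge1/c$ using $\Psi_\theta\le1/t$, handles $0<t<1/c$ by excluding $\MR(t)=\infty$ and rearranging \eqref{eq:reduced}, and then checks feasibility of $C=\max\{0,\sup_{t>0}\Psi_\theta\}$ through Lemma~\ref{lem:reduce}; the only difference is that you isolate the pointwise equivalence $\MR(t)\le\theta/(1-ct)\iff c\ge\Psi_\theta(t)$ for $ct<1$ as a preliminary step, whereas the paper proves each direction inline.
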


\begin{proof}
Let $c$ be feasible and $t>0$. If $t\ge1/c$, which is vacuous for $c=0$, then
$\Psi_\theta(t)\le1/t\le c$. If $0<t<1/c$ then $\MR(t)<\infty$, since otherwise the left
side of \eqref{eq:reduced} is infinite and the right side finite; rearranging
\eqref{eq:reduced} gives $1-ct\le\theta/\MR(t)$, that is $\Psi_\theta(t)\le c$. Hence
$\sup_{t>0}\Psi_\theta\le c$ for every feasible $c$, and as feasible poles are nonnegative
the right side of \eqref{eq:varformula} is a lower bound for $\cstar(v)$.

Conversely let $C=\max\{0,\sup_{t>0}\Psi_\theta\}<\infty$ and fix $0<t<1/C$. If
$\MR(t)=\infty$ then $\Psi_\theta(t)=1/t>C$, a contradiction, so $\MR(t)<\infty$. From
$\Psi_\theta(t)\le C$ we get $\theta/\MR(t)\ge1-Ct>0$, hence $\MR(t)\le\theta/(1-Ct)$. By
Lemma~\ref{lem:reduce} the pair $(v,C)$ is feasible, which gives the reverse inequality
and attainment.
\end{proof}

\begin{proposition}[Degenerate cases]\label{prop:degenerate}
\begin{enumerate}
\item[(i)] If $v<V$ then $\cstar(v)=\infty$.
\item[(ii)] $\cstar(v)<\infty$ for some $v>0$ if and only if $\tau>0$, and then
$\cstar(v)<\infty$ for every $v\ge V$. On $[V,\infty)$ finiteness of the frontier does not
depend on $v$.
\end{enumerate}
\end{proposition}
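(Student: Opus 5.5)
We work throughout with the variational formula of Theorem~\ref{thm:var} and the reduced condition of Lemma~\ref{lem:reduce}. Finiteness of $\cstar(v)$ is then the same as $\sup_{t>0}\Psi_\theta(t)<\infty$.

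For (i), let $v<V$, so $\theta<1$. By Lemma~\ref{lem:fact}, $\MR(t)=2K_X(t)/(Vt^2)$. The expansion $K_X(t)=Vt^2/2+o(t^2)$ requires some care when $\tau=0$, so we argue directly on $\MR$. As $t\downarrow0$, $\MR(t)\to1$: if $\tau>0$ this follows by dominated convergence, since $\E e^{tR}$ is finite near $0$. If $\tau=0$, then $\MR(t)=\infty$ for all $t>0$ and $\Psi_\theta(t)=1/t\to\infty$, so $\cstar=\infty$ at once. In the case $\tau>0$, $1-\theta/\MR(t)\to1-\theta>0$, hence $\Psi_\theta(t)\ge(1-\theta)/(2t)$ for small $t$, which is unbounded. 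Either way the supremum is infinite and $\cstar(v)=\infty$.

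For (ii), the ``only if'' direction holds for every $v>0$. If $\tau=0$ then $\MR\equiv\infty$ on $(0,\infty)$, so $\Psi_\theta(t)=1/t$ and the supremum is $+\infty$. For the converse, assume $\tau>0$ and $\theta\ge1$. By the decomposition \eqref{eq:pendecomp} of Lemma~\ref{lem:monpen}, $\Psi_\theta\le\Psi_1$, so it suffices to bound $\sup_t\Psi_1$. Pick $t_0\in(0,\tau)$, so that $\MR$ is finite on $[0,t_0]$. On $[t_0,\infty)$ we have $\Psi_1(t)\le1/t\le1/t_0$. On $(0,t_0]$ we write
\[
\Psi_1(t)=\frac{\MR(t)-1}{t\,\MR(t)}\le\frac{\MR(t)-1}{t}.
\]
By convexity of the moment generating function $\MR$ on $[0,t_0]$ with $\MR(0)=1$, the chord slope satisfies $(\MR(t)-1)/t\le(\MR(t_0)-1)/t_0<\infty$. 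Hence $\sup\Psi_1\le\max\{1/t_0,(\MR(t_0)-1)/t_0\}<\infty$, and $\cstar(v)<\infty$ for every $v\ge V$. The statement that finiteness on $[V,\infty)$ does not depend on $v$ is then just the combination of the two directions: $\cstar$ is finite throughout $[V,\infty)$ when $\tau>0$, and infinite throughout when $\tau=0$.

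The only delicate point is the limit $\MR(t)\to1$ in (i) and the use of convexity in (ii). Both rely on $\MR$ being finite on an interval $[0,t_0]$, which is exactly $\tau>0$. The case $\tau=0$ is disposed of separately in each part, so no step should be a real obstacle.
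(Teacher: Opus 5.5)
Your proof is correct and follows the paper's argument essentially step for step: the same split on $\tau=0$ versus $\tau>0$, dominated convergence giving $\MR(t)\to1$ in (i), and in (ii) the bound $\Psi_\theta\le\Psi_1$ combined with the chord-slope estimate from convexity of $\MR$ on $(0,t_0]$ and $\Psi_1(t)\le1/t\le1/t_0$ on $[t_0,\infty)$. The only difference is that the paper splits $(0,t_0]$ according to whether $\MR(t)<1$, whereas your inequality $\Psi_1(t)\le\{\MR(t)-1\}/t$ holds for every value $m=\MR(t)>0$, because $1-1/m\le m-1$ is equivalent to $(m-1)^2\ge0$; this makes the split unnecessary, but you should state that reason, since the inequality is not obvious at a glance when $\MR(t)<1$.
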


\begin{proof}
(i) Let $\theta<1$. If $\tau=0$ then $\MR(t)=\infty$ for every $t>0$, so
$\Psi_\theta(t)=1/t$ and the supremum is infinite. If $\tau>0$ then $\MR$ is finite on
$(0,\tau)$ and $\MR(t)\to1$ as $t\downarrow0$ by dominated convergence, dominating by
$1+e^{t_0R}$ for a fixed $t_0\in(0,\tau)$; hence
$\Psi_\theta(t)=t^{-1}(1-\theta+o(1))\to+\infty$.

(ii) If $\tau=0$ then $\MR(t)=\infty$ for every $t>0$, so $\Psi_\theta(t)=1/t$ for every
$\theta$, whose
supremum is infinite. If $\tau>0$ and $\theta\ge1$ fix $0<t_0<\tau$. On $(0,t_0]$ either $\MR(t)<1$, in which
case $\Psi_\theta\le\Psi_1<0$, or $\MR(t)\ge1$ and
$\Psi_\theta(t)\le\Psi_1(t)\le\{\MR(t)-1\}/t\le\{\MR(t_0)-1\}/t_0$, the last step because
$\MR$ is convex with $\MR(0)=1$, so that $t\mapsto\{\MR(t)-1\}/t$ is nondecreasing; on
$[t_0,\infty)$ we have $\Psi_\theta(t)\le1/t\le1/t_0$. (For $\theta<1$ the supremum is
infinite by (i).)
\end{proof}

From now on we consider $v\ge V$.

\begin{theorem}[Structure of the frontier]\label{thm:convex}
The map $v\mapsto\cstar(v)$ is nonincreasing and convex on $[V,\infty)$, and
\begin{equation}\label{eq:epi}
  \mathcal F\cap\bigl([V,\infty)\times[0,\infty)\bigr)=\{(v,c):v\ge V,\ c\ge\cstar(v)\},
\end{equation}
so this part of the feasible region is convex.
\end{theorem}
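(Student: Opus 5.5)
The plan is to work directly with the feasibility condition in its original form \eqref{eq:env}, or equivalently \eqref{eq:reduced}, rather than with the variational formula. Convexity and monotonicity of $\cstar$ then follow from corresponding facts about the set $\mathcal F$.

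\textbf{Monotonicity and the epigraph description.} If $(v,c)\in\mathcal F$ and $v'\ge v$, $c'\ge c$, then $(v',c')\in\mathcal F$. The reason is that for $0\le t<1/c'\le 1/c$ we have
\[
\frac{vt^2}{2(1-ct)}\le\frac{v't^2}{2(1-c't)}.
\]
Hence $\cstar$ is nonincreasing. The fibre $\{c:(v,c)\in\mathcal F\}$ is an up-set in $c$. By Proposition~\ref{prop:degenerate}, $\cstar(v)$ is either finite for all $v\ge V$ or infinite for all $v\ge V$. In the finite case, Theorem~\ref{thm:var} shows the infimum is attained. So each fibre is exactly $[\cstar(v),\infty)$, which gives \eqref{eq:epi}; when $\tau=0$ both sides are empty.

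\textbf{Convexity of $\mathcal F$.} Once \eqref{eq:epi} holds, convexity of $\cstar$ is equivalent to convexity of $\mathcal F\cap([V,\infty)\times[0,\infty))$, since that set is the epigraph. It therefore suffices to show that $\mathcal F$ itself is convex. I will use the fact that for fixed $t\ge0$ the map
\[
g_t(v,c)=\frac{vt^2}{2(1-ct)}\quad\text{on } \{ct<1\}
\]
is jointly convex: $v/(1-ct)$ is the perspective-type function $v^2/(v(1-ct))$, or more directly, $(x,y)\mapsto x^2/y$ is convex on $y>0$. To make this clean, I will rewrite feasibility at a fixed $t>0$ with $K_X(t)<\infty$ as
\[
2K_X(t)(1-ct)\le vt^2 \quad\text{and}\quad ct<1.
\]
This condition is affine in $(v,c)$. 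The strict constraint $ct<1$ is only imposed for those $t$ with $t<1/c$, though, so the constraint set depends on $c$ and this needs care.

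\textbf{Handling the domain issue.} Take $(v_i,c_i)\in\mathcal F$, $\lambda\in[0,1]$, and set $c=\lambda c_1+(1-\lambda)c_2$. For $t<1/c$ we need not have $t<1/c_i$ for both $i$. I expect this to be the main obstacle. I would resolve it as follows. If $t\ge 1/c_1$, say, then since $(v_1,c_1)$ is feasible, the proof of Theorem~\ref{thm:var} gives $\Psi_{\theta_1}(s)\le c_1$ for all $s$. Rather than chase that, I would use the reduced formulation in terms of $\Psi$: feasibility of $(v,c)$ with $v\ge V$ is equivalent to $\Psi_\theta(t)\le c$ for all $t>0$. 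By \eqref{eq:pendecomp}, $\Psi_\theta(t)$ is affine in $\theta$ for each fixed $t$, because $\Psi_\theta=\Psi_1-(\theta-1)\phi$. So the condition $\Psi_{\theta}(t)\le c$ is a half-plane condition in $(\theta,c)$ for each $t$. The feasible set $\{(v,c): v\ge V,\ c\ge0,\ \Psi_{v/V}(t)\le c\ \forall t>0\}$ is then an intersection of half-planes and is convex. Equivalently, $\cstar(v)=\max\{0,\sup_t\Psi_{v/V}(t)\}$ is a supremum of affine functions of $v$, hence convex, which sidesteps the domain issue entirely.

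The conventions at points where $\MR(t)=\infty$ need checking. There $\phi(t)=0$ and $\Psi_\theta(t)=1/t$, which is constant in $\theta$ and hence still affine. Under these conventions \eqref{eq:pendecomp} holds for all $t>0$.
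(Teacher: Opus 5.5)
Your final argument is the paper's own proof: for each fixed $t>0$ the map $v\mapsto\Psi_{v/V}(t)$ is affine with nonpositive slope (the constant $1/t$ where $\MR(t)=\infty$), so $\cstar=\max\{0,\sup_{t>0}\Psi_{v/V}(t)\}$ is convex and nonincreasing, and \eqref{eq:epi} follows from monotonicity of feasibility in $c$, the attainment in Theorem~\ref{thm:var}, and Proposition~\ref{prop:degenerate} for the empty case. Your aside that $(v,c)\mapsto vt^2/\{2(1-ct)\}$ is jointly convex is false (its Hessian has negative determinant), and convexity would be the wrong property anyway since feasibility is a superlevel condition; the fact you actually rely on, that each constraint $\Psi_{v/V}(t)\le c$ is a half-plane in $(v,c)$, is correct, so the slip does no harm.
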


\begin{proof}
Fix $t>0$. By \eqref{eq:psi} the map
$v\mapsto\Psi_{v/V}(t)=t^{-1}-v\bigl(tV\MR(t)\bigr)^{-1}$ is affine in $v$ with slope
$-\bigl(tV\MR(t)\bigr)^{-1}\le0$; if $\MR(t)=\infty$ it is the constant $1/t$, again affine
with slope $0$. A pointwise supremum of affine nonincreasing functions is convex and
nonincreasing, and taking the maximum with the constant $0$ preserves both properties;
Theorem~\ref{thm:var} then gives the first claim.

Feasibility is monotone in $c$: if $c\le c'$ and $(v,c)\in\mathcal F$ then for
$0\le t<1/c'\le1/c$,
\[
  K_X(t)\le\frac{vt^2}{2(1-ct)}\le\frac{vt^2}{2(1-c't)} .
\]
Together with the attainment in Theorem~\ref{thm:var} this gives \eqref{eq:epi} when
$\cstar(v)<\infty$; when $\cstar(v)=\infty$ both sides are empty in that fibre by
Proposition~\ref{prop:degenerate}(ii). The right side of \eqref{eq:epi} is the epigraph of
a convex function on a half-line.
\end{proof}

\begin{proposition}[Continuity at the variance-exact endpoint]\label{prop:cont}
Assume $\tau>0$. Then $\cstar$ is continuous on $[V,\infty)$; in particular
$\lim_{v\downarrow V}\cstar(v)=\cstar(V)$.
\end{proposition}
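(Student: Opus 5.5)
Since $\tau>0$, Proposition~\ref{prop:degenerate}(ii) shows that $\cstar$ is finite on $[V,\infty)$, and Theorem~\ref{thm:convex} shows that it is convex and nonincreasing there. A finite convex function on a half-line is continuous on the open interval $(V,\infty)$. So the only thing to prove is continuity at the endpoint $v=V$. Because $\cstar$ is nonincreasing, $L:=\lim_{v\downarrow V}\cstar(v)$ exists and $L\le\cstar(V)$. It therefore remains to show $\cstar(V)\le L$, which is the lower semicontinuity of a convex function at the boundary of its domain. Convexity alone does not give this, since a convex function can jump up at an endpoint, so I will use the variational formula directly.

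By Theorem~\ref{thm:var}, $\cstar(V)=\max\{0,\sup_{t>0}\Psi_1(t)\}$. If $\cstar(V)=0$ there is nothing to prove. Otherwise, fix $\varepsilon>0$ and choose $t_0>0$ with $\Psi_1(t_0)>\cstar(V)-\varepsilon$. I split into two cases according to $\MR(t_0)$.

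\emph{Case $\MR(t_0)<\infty$.} Here $\phi(t_0)=1/\{t_0\MR(t_0)\}$ is finite. By \eqref{eq:pendecomp},
\[
\cstar(v)\ \ge\ \Psi_\theta(t_0)\ =\ \Psi_1(t_0)-(\theta-1)\phi(t_0),
\]
and the right side tends to $\Psi_1(t_0)$ as $\theta\downarrow1$. Hence $L\ge\Psi_1(t_0)>\cstar(V)-\varepsilon$.

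\emph{Case $\MR(t_0)=\infty$.} Then $\Psi_\theta(t_0)=1/t_0$ for every $\theta$, so $\cstar(v)\ge1/t_0=\Psi_1(t_0)$ for all $v$, and again $L>\cstar(V)-\varepsilon$.

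Letting $\varepsilon\downarrow0$ gives $L\ge\cstar(V)$, which completes the argument. Equivalently, $\cstar$ is a pointwise supremum of functions of $v$ that are each continuous on $[V,\infty)$. These are the affine maps of the proof of Theorem~\ref{thm:convex}, together with the constant $0$. Such a supremum is lower semicontinuous, and a lower semicontinuous, nonincreasing function is right-continuous at $V$.

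I expect the only delicate point to be the endpoint lower semicontinuity. The affine-in-$v$ structure handles it, including the extended case $\MR(t)=\infty$, where the slope is $0$. The hypothesis $\tau>0$ is needed only to guarantee finiteness, so that convexity yields continuity on the interior.
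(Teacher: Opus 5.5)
Your proof is correct and follows essentially the same route as the paper: convexity and finiteness give continuity on $(V,\infty)$, monotonicity gives a limit $L\le\cstar(V)$, and the reverse inequality comes from fixing $t_0$, letting $\theta\downarrow1$ in the affine map $\theta\mapsto\Psi_\theta(t_0)$, and taking the supremum over $t_0$. Your case split on $\MR(t_0)$ is harmless but unnecessary, since with the convention $1/\infty=0$ one has $\phi(t_0)=0$ in the second case and the first case already covers it.
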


\begin{proof}
A finite convex function is continuous on the interior of its domain, so only the endpoint
needs an argument. As $\cstar$ is nonincreasing the limit
$L=\lim_{\theta\downarrow1}\cstar(\theta V)$ exists and $L\le\cstar(V)$. For the reverse
inequality fix $t_0>0$. Since $\theta\mapsto\Psi_\theta(t_0)$ is affine, hence continuous,
\[
  L\;\ge\;\lim_{\theta\downarrow1}\max\{0,\Psi_\theta(t_0)\}\;=\;\max\{0,\Psi_1(t_0)\}.
\]
Taking the supremum over $t_0>0$ gives $L\ge\max\{0,\sup_{t>0}\Psi_1\}=\cstar(V)$.
\end{proof}

\begin{remark}
Continuity at the endpoint does not follow from convexity, which controls only the
interior, and the objectives on the two sides of $v=V$ are of different shapes: the
supremum defining $\cstar(V)$ may be approached only as $t\downarrow0$, whereas for every
$\theta>1$ the objective tends to $-\infty$ at the origin
(Proposition~\ref{prop:knife}). It is the location of the maximiser, not its value, that
jumps at $v=V$.
\end{remark}

\section{Mechanisms along the frontier}\label{sec:mech}

The variance-exact pole can be produced by a local mechanism at $t\downarrow0$, a boundary
mechanism on $(\tau,\infty)$, or an interior maximiser \cite{Chen2026a}. These are not
mutually exclusive --- the supremum may be approached or attained in more than one of the
three ways at once, and Example~\ref{ex:knifefloor} realises two of them together --- but
the first two respond to a change of $v$ in opposite ways. We fix the terminology, which
is used throughout the paper.

\begin{definition}[Control mechanisms]\label{def:control}
Assume $\tau>0$ and $\cstar(V)>0$, so that $\cstar(V)=\sup_{t>0}\Psi_1$ by
Theorem~\ref{thm:var}. The variance-exact pole is
\emph{locally controlled} if $\cstar(V)=\lim_{t\downarrow0}\Psi_1(t)$, which by
Proposition~\ref{prop:knife} below is the statement $\cstar(V)=m_1=\kappa_3/(3V)$ whenever
\eqref{eq:m1} applies;
\emph{boundary controlled} if $\cstar(V)=1/\tau$; and
\emph{interior controlled} if the supremum is attained at some $t\in(0,\tau)$.
\end{definition}

These three exhaust the possibilities. Indeed $\Psi_1$ is continuous on $(0,\tau]$ and
equals $1/t$ on $(\tau,\infty)$ by Lemma~\ref{lem:psicont} below, so a supremum that is neither
approached at the origin nor attained in $(0,\tau)$ must be $\sup_{t\ge\tau}\Psi_1$; that
supremum equals $1/\tau$, since $\Psi_1(\tau)<1/\tau$ when $\MR(\tau)<\infty$ and
$\Psi_1(\tau)=1/\tau$ otherwise. They are not exclusive: Example~\ref{ex:knifefloor} is
both locally and boundary controlled, and so is the tempered stable law of
Example~\ref{ex:ts} at $Y=-1$.

\subsection{The boundary floor}

\begin{proposition}[Floor and flatness]\label{prop:floor}
For every $v>0$ one has $\cstar(v)\ge1/\tau$. Consequently, if the variance-exact pole is
boundary controlled, that is $\cstar(V)=1/\tau$, then
$\cstar(v)=1/\tau$ for every $v\ge V$: the frontier is constant and relaxing the quadratic
proxy buys nothing.
\end{proposition}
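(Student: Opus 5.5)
The floor comes straight from the variational formula of Theorem~\ref{thm:var}; monotonicity from Theorem~\ref{thm:convex} then gives flatness.

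\emph{Step 1: the floor.} If $\tau=\infty$ the claim $\cstar(v)\ge0$ is trivial. If $\tau=0$, Proposition~\ref{prop:degenerate}(ii) gives $\cstar(v)=\infty=1/\tau$. Suppose $0<\tau<\infty$. By Lemma~\ref{lem:fact}, $\MR(t)=\infty$ for every $t>\tau$, so by the convention $\theta/\infty=0$ we get $\Psi_\theta(t)=1/t$ for all $t>\tau$ and every $\theta$. Theorem~\ref{thm:var} then gives
\[
  \cstar(v)\ge\sup_{t>\tau}\Psi_\theta(t)=\sup_{t>\tau}\frac1t=\frac1\tau .
\]
The same bound for $v<V$ is trivial by Proposition~\ref{prop:degenerate}(i). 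One could also argue directly from feasibility: a pair $(v,c)$ with $c<1/\tau$ would require $K_X(t)<\infty$ on $(\tau,1/c)$, which is impossible.

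\emph{Step 2: flatness.} Assume $\cstar(V)=1/\tau$. Then $\tau>0$, since otherwise both sides are infinite and the statement reduces to $\cstar\equiv\infty$, which is consistent with Proposition~\ref{prop:degenerate}. By Theorem~\ref{thm:convex}, $\cstar$ is nonincreasing on $[V,\infty)$, so $\cstar(v)\le\cstar(V)=1/\tau$ for every $v\ge V$. Step~1 gives the reverse inequality, hence $\cstar(v)=1/\tau$ on $[V,\infty)$.

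There is essentially no obstacle. The only care needed is with the extended-value conventions at $\tau\in\{0,\infty\}$ and with the fact that $\MR(\tau)$ itself may be finite. This does not matter, because the supremum over the open set $t>\tau$ already equals $1/\tau$ without being attained.
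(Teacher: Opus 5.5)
Your proof is correct and is essentially the paper's own argument. The floor follows from $\Psi_\theta(t)=1/t$ on $(\tau,\infty)$ in the variational formula of Theorem~\ref{thm:var}, and flatness follows from the sandwich $1/\tau\le c_*(v)\le c_*(V)=1/\tau$ given by monotonicity. Your separate appeal to Proposition~\ref{prop:degenerate} for $\tau=0$ is harmless but unnecessary, because the same supremum argument already gives $\sup_{t>0}1/t=\infty$ in that case.
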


\begin{proof}
For $\tau=\infty$ there is nothing to prove. If $\tau<\infty$ then $\MR(t)=\infty$ for
$t>\tau$, so $\Psi_\theta(t)=1/t$ there and
$\sup_{t>0}\Psi_\theta\ge\sup_{t>\tau}1/t=1/\tau$. The second claim follows from
monotonicity: $1/\tau\le\cstar(v)\le\cstar(V)=1/\tau$.
\end{proof}

\subsection{Local control is a knife edge}

Assume, here and wherever $m_1$ or $\kappa_3$ appears below, that
$\int_{|x|>1}|x|^3\nu(dx)<\infty$, so that $\kappa_3$ exists
and
\begin{equation}\label{eq:m1}
  m_1=\E R=\frac{\kappa_3}{3V}.
\end{equation}

We first record the regularity of $\Psi_\theta$, which is used repeatedly below.

\begin{lemma}[Continuity and the jump at the abscissa]\label{lem:psicont}
Let $\theta\ge1$. As an extended-real-valued function $\Psi_\theta$ is continuous on
$(0,\tau]$ and equals $1/t$ on $(\tau,\infty)$. It need \emph{not} be continuous at $\tau$
from the right: if $\tau<\infty$ and $\MR(\tau)<\infty$ then
$\Psi_\theta(\tau)<1/\tau=\lim_{t\downarrow\tau}\Psi_\theta(t)$, an upward jump.
\end{lemma}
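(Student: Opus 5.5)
The lemma reduces to the regularity of $\MR$ on $(0,\tau]$ and $(\tau,\infty)$, because $\Psi_\theta(t)=t^{-1}(1-\theta/\MR(t))$ is a continuous function of $(t,\MR(t))\in(0,\infty)\times(0,\infty]$, with the convention $\theta/\infty=0$. The plan is to show that $\MR$ is continuous as an extended-real-valued map on $(0,\tau]$ and infinite on $(\tau,\infty)$, and then read off all three claims.

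On $(\tau,\infty)$, Lemma~\ref{lem:fact} gives $\MR(t)=\infty$ whenever $K_X(t)=\infty$, which holds for $t>\tau$ by the definition of $\tau$. Hence $\Psi_\theta(t)=1/t$ there. On the open interval $(0,\tau)$, $\MR$ is the moment generating function of $R$ and is finite, so it is continuous. I would argue this by dominated convergence: for $t$ in a compact subinterval $[t_1,t_2]\subset(0,\tau)$, dominate $e^{tR}$ by $e^{t_1R}+e^{t_2R}$. Since $\MR>0$, $\Psi_\theta$ is continuous on $(0,\tau)$.

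The main step is continuity at $t=\tau$ from the left, when $\tau<\infty$. I would split $e^{tR}=e^{tR}\mathbf 1_{R\ge0}+e^{tR}\mathbf 1_{R<0}$. As $t\uparrow\tau$, the first part increases to $e^{\tau R}\mathbf 1_{R\ge0}$, so monotone convergence gives convergence of its expectation, including to $+\infty$. The second part is bounded by $1$, so dominated convergence applies. Hence $\MR(t)\to\MR(\tau)\in(0,\infty]$. Composing with the continuous map $m\mapsto t^{-1}(1-\theta/m)$, extended by $1/t$ at $m=\infty$, gives continuity of $\Psi_\theta$ at $\tau$ from the left as an extended-real function. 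When $\MR(\tau)=\infty$ the value is $1/\tau$, which agrees with the right-hand values, so $\Psi_\theta$ is then continuous at $\tau$ from both sides.

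For the jump, suppose $\tau<\infty$ and $\MR(\tau)<\infty$. Then $\Psi_\theta(\tau)=\tau^{-1}(1-\theta/\MR(\tau))<1/\tau$, since $\theta\ge1>0$ and $\MR(\tau)\in(0,\infty)$. Meanwhile $\lim_{t\downarrow\tau}\Psi_\theta(t)=\lim_{t\downarrow\tau}1/t=1/\tau$. This is the stated upward jump.

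I do not expect a serious obstacle here. The only delicate point is the one-sided limit at $\tau$, which monotone convergence on the part where $R\ge0$ handles.
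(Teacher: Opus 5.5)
Your proposal is correct and matches the paper's proof. Both arguments obtain $\MR(t)\to\MR(\tau)$ as $t\uparrow\tau$ by monotone convergence on $\{R\ge0\}$ and dominated convergence on $\{R<0\}$, get $\Psi_\theta=1/t$ beyond $\tau$ from $\MR=\infty$, and get the jump from $\theta/\MR(\tau)>0$; the only cosmetic difference is that the paper gets continuity of $\MR$ on $(0,\tau)$ from convexity of a finite convex function, while you use dominated convergence with $e^{t_1R}+e^{t_2R}$, which works equally well.
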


\begin{proof}
$\MR$ is convex and finite on $(0,\tau)$, hence continuous there; and
$\MR(t)\to\MR(\tau)$ as $t\uparrow\tau$, by monotone convergence on $\{R\ge0\}$ and
dominated convergence on $\{R<0\}$, where the integrand is bounded by $1$. For $t>\tau$
we have $\MR(t)=\infty$, so $\Psi_\theta(t)=1/t$. If $\MR(\tau)<\infty$ then
$\Psi_\theta(\tau)=\tau^{-1}\{1-\theta/\MR(\tau)\}<1/\tau$.
\end{proof}

\begin{proposition}[Knife edge]\label{prop:knife}
Assume $\tau>0$. Then $\lim_{t\downarrow0}\Psi_1(t)=m_1$, while
$\lim_{t\downarrow0}\Psi_\theta(t)=-\infty$ for every $\theta>1$. Hence the local
obstruction $\cstar(V)\ge\max\{0,\kappa_3/(3V)\}$ has no analogue for $v>V$: there the
supremum in \eqref{eq:varformula} is attained at a positive finite argument, or forced by
the floor of Proposition~\ref{prop:floor}, or nonpositive, in which case $\cstar(v)=0$.
\end{proposition}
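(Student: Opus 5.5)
The proof rests on the limit $\lim_{t\downarrow0}\{\MR(t)-1\}/t=\E R=m_1$. From it we read off the behaviour of $\Psi_1$ at the origin. The decomposition \eqref{eq:pendecomp} of Lemma~\ref{lem:monpen} then gives the behaviour of $\Psi_\theta$, and the description of the supremum follows from the continuity in Lemma~\ref{lem:psicont}.

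\emph{Step 1: the derivative of $\MR$ at the origin.} Since $\tau>0$, fix $t_0\in(0,\tau)$. For $0<t\le t_0/2$ the difference quotient $(e^{tR}-1)/t$ is bounded by $|R|e^{t_0|R|/2}$. The right tail is integrable because $\MR(t_0)<\infty$. For the left tail we need $\E|R|<\infty$ when $R<0$, which follows from the moment assumption $\int_{|x|>1}|x|^3\nu(dx)<\infty$, since $\E|R|=\tfrac13 V^{-1}\int|x|^3\nu(dx)$ is finite. On $\{R<0\}$ the bound is simply $|R|$. Dominated convergence then gives $\{\MR(t)-1\}/t\to\E R=m_1$, and also $\MR(t)\to1$.

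\emph{Step 2: the two limits.} First,
\[
\Psi_1(t)=\frac{\MR(t)-1}{t\,\MR(t)}\to m_1 .
\]
Second, $\phi(t)=1/\{t\MR(t)\}\sim1/t\to+\infty$. By \eqref{eq:pendecomp}, $\Psi_\theta=\Psi_1-(\theta-1)\phi$, and for $\theta>1$ this tends to $m_1-\infty=-\infty$.

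The inequality $\cstar(V)\ge\max\{0,m_1\}$ is immediate from Theorem~\ref{thm:var}, since the supremum dominates the limit at the origin.

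\emph{Step 3: trichotomy for $\theta>1$.} Set $S=\sup_{t>0}\Psi_\theta$.

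If $S\le0$, then $\cstar(v)=0$ by Theorem~\ref{thm:var}.

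Suppose instead $S>0$. Because $\Psi_\theta\to-\infty$ at $0$, there is $\delta>0$ with $\Psi_\theta<S/2$ on $(0,\delta)$, so the supremum may be taken over $[\delta,\infty)$. Two cases remain.

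\begin{itemize}
\item If $\tau=\infty$: $\Psi_\theta\le1/t$, so for large $T$ the values on $(T,\infty)$ lie below $S/2$. On the compact interval $[\delta,T]$ the function is continuous (Lemma~\ref{lem:psicont}), so the supremum is attained at a positive finite argument.
\item If $\tau<\infty$: on $(\tau,\infty)$ we have $\Psi_\theta=1/t<1/\tau$, with supremum exactly $1/\tau$. On $[\delta,\tau]$ the function is continuous in the extended sense, so its supremum there is attained.
\begin{itemize}
\item Attainment in $[\delta,\tau)$ is the first alternative.
\item If the supremum is $1/\tau$ coming from $(\tau,\infty)$, or attained at $\tau$ with value $1/\tau$ (the case $\MR(\tau)=\infty$), then $\cstar(v)=1/\tau$ is forced by the floor of Proposition~\ref{prop:floor}.
\item If the supremum is attained at $\tau$ with $\MR(\tau)<\infty$, it is attained at a positive finite argument.
\end{itemize}
\end{itemize}

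\emph{Main obstacle.} The delicate point is Step 1: justifying domination uniformly near $0$ on the negative side of $R$. The first moment of $R^-$ must be controlled by the third-moment hypothesis, because $\tau>0$ says nothing about the left tail. I will check that $\E|Z|=V^{-1}\int|x|^3\nu(dx)$ is finite under the standing assumption. This holds because the small jumps contribute $\int_{|x|\le1}|x|^3\nu\le\int x^2\nu<\infty$.
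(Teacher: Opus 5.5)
Your proof is correct and follows the paper's argument. Dominated convergence gives $\MR(t)=1+m_1t+o(t)$; you dominate by $Re^{t_0R/2}$ on $\{R\ge0\}$ and $|R|$ on $\{R<0\}$ (with $\E|R|<\infty$ made explicit from the third-moment assumption), where the paper uses $(e^{t_0R}-1)/t_0$ and $|R|$. The trichotomy then follows from Lemma~\ref{lem:psicont} and $\Psi_\theta\le1/t$.

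The differences are cosmetic. You get the $-\infty$ limit through \eqref{eq:pendecomp} rather than by expanding $\theta/\MR(t)$ directly. Your sub-case ``attained at $\tau$ with $\MR(\tau)<\infty$'' is vacuous, since there $\Psi_\theta(\tau)<1/\tau\le S$.
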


\begin{proof}
Since $\tau>0$, $\MR$ is finite near the origin, and dominated convergence --- with
$(e^{t_0R}-1)/t_0$ on $\{R\ge0\}$ for a fixed $t_0\in(0,\tau)$ and $|R|$ on $\{R<0\}$ --- gives
$\MR(t)=1+m_1t+o(t)$, so
$\theta/\MR(t)=\theta(1-m_1t+o(t))$ and
\[
  \Psi_\theta(t)=\frac{1-\theta}{t}+\theta m_1+o(1).
\]
For $\theta=1$ the first term vanishes; for $\theta>1$ it tends to $-\infty$.

For the last claim fix $\theta>1$ and suppose $\sup_{t>0}\Psi_\theta>0$. By
Lemma~\ref{lem:psicont}, $\Psi_\theta$ is continuous on $(0,\tau]$ and equals $1/t$ on
$(\tau,\infty)$, so $\sup_{t>0}\Psi_\theta=\max\{\sup_{(0,\tau]}\Psi_\theta,\,1/\tau\}$,
with $1/\tau=0$ when $\tau=\infty$. On $(0,\tau]$ the function $\Psi_\theta$ tends to
$-\infty$ at the origin by the first part and satisfies $\Psi_\theta\le1/t$, so it attains
its supremum there at some positive finite argument unless that supremum is at most
$1/\tau$, in which case the value $1/\tau$ of Proposition~\ref{prop:floor} is what forces
the pole. If instead $\sup_{t>0}\Psi_\theta\le0$ then $\cstar(v)=0$ by
Theorem~\ref{thm:var}.
\end{proof}

The strict drop needs no control mechanism at all: it needs only that the variance-exact
pole lies strictly above both floors, the trivial one built into \eqref{eq:cstardef} and
the boundary one of Proposition~\ref{prop:floor}.

\begin{theorem}[Strict drop above the floors]\label{thm:drop}
If $\cstar(V)>\max\{0,1/\tau\}$, then $\cstar(v)<\cstar(V)$ for every $v>V$.
\end{theorem}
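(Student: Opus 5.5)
By Theorem~\ref{thm:convex} the frontier is nonincreasing, so $\cstar(v)\le\cstar(V)$. The work is to rule out equality for $\theta=v/V>1$. Set $c_1=\cstar(V)$, which is finite because it exceeds $0$ and we may assume $\tau>0$. If $\tau=0$, Proposition~\ref{prop:degenerate} gives an infinite pole everywhere and the hypothesis is void. By Theorem~\ref{thm:var} it suffices to show $\sup_{t>0}\Psi_\theta<c_1$. I will split $(0,\infty)$ into three ranges, show that $\Psi_\theta$ stays uniformly below $c_1$ on each, and use the decomposition $\Psi_\theta=\Psi_1-(\theta-1)\phi$ of Lemma~\ref{lem:monpen}.

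\emph{Step 1 (large $t$).} Since $c_1>1/\tau$, choose $T$ with $1/c_1<T<\tau$, or any $T>1/c_1$ if $\tau=\infty$. For $t\ge T$ we have $\Psi_\theta(t)\le1/t\le1/T<c_1$.

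\emph{Step 2 (compact middle range).} On $[\delta,T]\subset(0,\tau)$ the function $\MR$ is finite and continuous, so $\phi(t)=\{t\MR(t)\}^{-1}$ is continuous and strictly positive. Since $\phi$ is nonincreasing, $\phi\ge\phi(T)>0$ on $[\delta,T]$. Hence
\[
\Psi_\theta\le c_1-(\theta-1)\phi(T)<c_1 .
\]
This bound is in fact uniform on the whole interval $(0,T]$, because $\phi$ is nonincreasing there.

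\emph{Step 3 (near the origin).} This step needs no separate work: the bound from Step~2 already holds on all of $(0,T]$ via $\Psi_1\le c_1$ and $\phi\ge\phi(T)$. Proposition~\ref{prop:knife}, which gives $\Psi_\theta\to-\infty$ at $0$, is not needed. Combining with Step~1,
\[
\sup_{t>0}\Psi_\theta\le\max\Bigl\{c_1-(\theta-1)\phi(T),\ 1/T\Bigr\}<c_1 .
\]
Then $\cstar(v)=\max\{0,\sup\Psi_\theta\}<c_1$, since $c_1>0$.

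The only delicate point is choosing $T$ strictly inside $(0,\tau)$ so that $\phi(T)>0$, which requires $\MR(T)<\infty$. This is exactly where the strict inequality $c_1>1/\tau$ is used. The hypothesis $c_1>0$ ensures that the maximum with $0$ does not cap the value at $c_1$. I expect this choice of $T$ to be the main obstacle, and it is minor.
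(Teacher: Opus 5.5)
Your proof is correct, and it takes a more direct route than the paper's.

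The paper argues by contradiction. It takes $t_k$ with $\Psi_\theta(t_k)\to\cstar(V)$, extracts a limit $t^*\in[0,\infty]$, and rules out three cases:
\begin{itemize}
\item $t^*=0$, where $\phi\to\infty$;
\item $t^*\in(0,\tau)$, where continuity of $\MR$ gives $\phi(t^*)>0$;
\item $t^*\ge\tau$, where $\Psi_1\le1/t$ caps the value at $\max\{0,1/\tau\}$.
\end{itemize}
Your Step~1 is the third case made uniform. Your use of the monotonicity of $\phi$ from Lemma~\ref{lem:monpen} gives $\phi\ge\phi(T)>0$ on all of $(0,T]$. That absorbs the first two cases at once, with no continuity of $\MR$, no subsequences, and no appeal to the behaviour at the origin.

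What your route buys is a quantitative statement. For any $T\in(1/\cstar(V),\tau)$,
\[
  \cstar(V)-\cstar(\theta V)\;\ge\;\min\bigl\{(\theta-1)\phi(T),\ \cstar(V)-1/T\bigr\},
\]
so the drop is at least linear in $\theta-1$. The paper's argument is purely qualitative. Its case split does, however, mirror the local, interior and boundary mechanisms that organise the rest of the paper.

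Two small presentational points:
\begin{itemize}
\item The finiteness of $\cstar(V)$ comes from $\tau>0$ via Proposition~\ref{prop:degenerate}(ii), not from $\cstar(V)>0$. The case $\tau=0$ is excluded simply because then $\max\{0,1/\tau\}=\infty$.
\item The interval $[\delta,T]$ and the continuity of $\MR$ in Step~2 play no role and can be dropped.
\end{itemize}
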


\begin{proof}
Write $c_0=\cstar(V)$; by hypothesis $c_0>0$, so $c_0=\sup_{t>0}\Psi_1$. The hypothesis
also forces $\tau>0$, since $\tau=0$ would give $\max\{0,1/\tau\}=\infty$. Fix $\theta>1$.
Since $\phi\ge0$, \eqref{eq:pendecomp} gives $\Psi_\theta\le\Psi_1$ and hence
$\sup_{t>0}\Psi_\theta\le c_0$. Suppose the supremum equals $c_0$ and choose $t_k>0$ with
$\Psi_\theta(t_k)\to c_0$. As $\Psi_\theta=\Psi_1-(\theta-1)\phi$ with both
$\Psi_1\le c_0$ and $\phi\ge0$, we get $\Psi_1(t_k)\to c_0$ and $\phi(t_k)\to0$. Passing
to a subsequence, $t_k\to t^*\in[0,\infty]$, and we distinguish three cases.

If $t^*=0$ then $\MR(t_k)\to1$, so $\phi(t_k)=\{t_k\MR(t_k)\}^{-1}\to\infty$ and
$\Psi_\theta(t_k)\to-\infty$, contradicting $\Psi_\theta(t_k)\to c_0$.

If $t^*\in(0,\tau)$ then $\MR$ is continuous and finite at $t^*$, so
$\phi(t_k)\to\phi(t^*)>0$, contradicting $\phi(t_k)\to0$.

If $t^*\in[\tau,\infty]$ then, since $\Psi_1(t)\le1/t$ for every $t>0$,
$c_0=\lim_k\Psi_1(t_k)\le\lim_k 1/t_k=1/t^*\le\max\{0,1/\tau\}$, with $1/\infty=0$,
contradicting the hypothesis $c_0>\max\{0,1/\tau\}$.

Hence $\sup_{t>0}\Psi_\theta<c_0$, and as $c_0>0$,
$\cstar(\theta V)=\max\{0,\sup_{t>0}\Psi_\theta\}<c_0$.
\end{proof}

Under pure local control the hypothesis of Theorem~\ref{thm:drop} takes an explicit form,
and a sharper statement about $\Psi_1$ away from the origin becomes available; both are
needed in Section~\ref{sec:asym}.

\begin{lemma}[Pure local control]\label{lem:away}
Assume $\tau>0$, $m_1>0$,
\begin{equation}\label{eq:purelocal}
  \Psi_1(t)<m_1\quad\text{for every }t>0,
\end{equation}
and, when $\tau<\infty$, the strict inequality $m_1>1/\tau$. Then
$\cstar(V)=m_1>\max\{0,1/\tau\}$, Theorem~\ref{thm:drop} applies, and
\begin{equation}\label{eq:away}
  \sup_{t\ge\delta}\Psi_1(t)<m_1\qquad\text{for every }\delta>0 .
\end{equation}
\end{lemma}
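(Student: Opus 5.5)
The plan has two parts. First we identify $\cstar(V)$ and check the hypothesis of Theorem~\ref{thm:drop}. Then we prove \eqref{eq:away} by a compactness-style argument on $[\delta,\infty)$.

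\emph{Identifying the pole.} By Proposition~\ref{prop:knife}, $\lim_{t\downarrow0}\Psi_1(t)=m_1$, so $\sup_{t>0}\Psi_1\ge m_1$. Assumption \eqref{eq:purelocal} gives the reverse inequality, so $\sup_{t>0}\Psi_1=m_1>0$. Theorem~\ref{thm:var} then yields $\cstar(V)=\max\{0,m_1\}=m_1$. When $\tau=\infty$ we have $\max\{0,1/\tau\}=0<m_1$. When $\tau<\infty$ the assumed strict inequality $m_1>1/\tau$ gives $\cstar(V)>\max\{0,1/\tau\}$. Hence Theorem~\ref{thm:drop} applies.

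\emph{Proof of \eqref{eq:away}.} Fix $\delta>0$ and suppose, for contradiction, that $\sup_{t\ge\delta}\Psi_1=m_1$; it cannot exceed $m_1$. Choose $t_k\ge\delta$ with $\Psi_1(t_k)\to m_1$ and pass to a subsequence with $t_k\to t^*\in[\delta,\infty]$. We split into cases, following the proof of Theorem~\ref{thm:drop}.
\begin{enumerate}
\item[(a)] If $t^*\in[\tau,\infty]$, which is only possible when $\tau<\infty$ or $t^*=\infty$, then $\Psi_1(t_k)\le1/t_k$ gives $m_1\le1/t^*\le\max\{0,1/\tau\}$. This contradicts $m_1>\max\{0,1/\tau\}$.
\item[(b)] If $t^*\in[\delta,\tau)$, Lemma~\ref{lem:psicont} says $\Psi_1$ is continuous and finite at $t^*$. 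Then $\Psi_1(t^*)=m_1$, contradicting \eqref{eq:purelocal} at the point $t^*>0$.
\end{enumerate}

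The only delicate point is case (a) with $t_k\to\tau$ from below versus above. This is exactly where the upward jump of Lemma~\ref{lem:psicont} matters. We handle it uniformly by using $\Psi_1(t)\le 1/t$ for all $t>0$, so that $\limsup\Psi_1(t_k)\le 1/\tau$ regardless of the side of approach. This is why the strict inequality $m_1>1/\tau$ is needed. In the case $\tau=\infty$, $t^*=\infty$ simply forces the limit $\le0<m_1$. Hence the supremum over $[\delta,\infty)$ is strictly below $m_1$.
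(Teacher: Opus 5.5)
Your proof is correct and follows essentially the paper's argument: you identify the pole from Proposition~\ref{prop:knife}, \eqref{eq:purelocal} and Theorem~\ref{thm:var}, and you get \eqref{eq:away} from three ingredients, namely continuity of $\Psi_1$ on $(0,\tau]$, the tail bound $\Psi_1(t)\le1/t$, and the strict inequality $m_1>1/\tau$. The difference is only in presentation: you argue by contradiction with a sequence, in the style of Theorem~\ref{thm:drop}, whereas the paper takes the maximum directly on the compact intervals $[\delta,2/m_1]$ (when $\tau=\infty$) or $[\delta,\tau]$ (when $\tau<\infty$) and bounds the remaining tail by $1/t$.
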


\begin{proof}
That $\cstar(V)=m_1$ is Proposition~\ref{prop:knife} together with
\eqref{eq:purelocal}, and $m_1>\max\{0,1/\tau\}$ holds by hypothesis. For
\eqref{eq:away} use Lemma~\ref{lem:psicont}. If $\tau=\infty$, set $T=2/m_1$: for
$t\ge T$ we have $\Psi_1(t)\le1/t\le m_1/2$, while on the compact interval $[\delta,T]$
the continuous function $\Psi_1$ attains a maximum, which is strictly below $m_1$ by
\eqref{eq:purelocal}; for $\delta\ge T$ the first bound alone suffices. If $\tau<\infty$,
then $\sup_{t>\tau}\Psi_1=\sup_{t>\tau}1/t=1/\tau<m_1$ by hypothesis, while for
$\delta<\tau$ the maximum of $\Psi_1$ over the compact interval $[\delta,\tau]$ is
attained and is strictly below $m_1$; for $\delta\ge\tau$ the first bound alone suffices.
\end{proof}

\begin{remark}\label{rem:implied}
Hypothesis \eqref{eq:purelocal} implies $m_1\ge1/\tau$ when $\tau<\infty$: for $t>\tau$
one has $\Psi_1(t)=1/t<m_1$, and letting $t\downarrow\tau$ gives $1/\tau\le m_1$. Pure
local control therefore dominates the floor automatically. The strict inequality assumed in
Lemma~\ref{lem:away} is a genuine further requirement, and it is exactly the statement that
the law is not simultaneously boundary controlled: by Proposition~\ref{prop:floor} the
frontier is constant as soon as $\cstar(V)=1/\tau$, so $m_1=1/\tau$ forces
$\cstar\equiv1/\tau$ and the hypothesis of Theorem~\ref{thm:drop} fails, as does its
conclusion. The two mechanisms are therefore not mutually exclusive, and the boundary case
$m_1=1/\tau$ occurs; Example~\ref{ex:knifefloor} exhibits it.
\end{remark}

\begin{example}[Local and boundary control can coincide]\label{ex:knifefloor}
Let $X$ be centered compound Poisson with L\'evy measure $\nu(dx)=x^{-2}e^{-(x-2)}dx$ on
$(2,\infty)$. Then $Z=2+E$ with $E$ standard exponential, $V=1$, $\kappa_3=3$,
$\kappa_4=10$, so $m_1=1$, $m_2=5/3$ and $\kappa=1/6>0$. Since $\E B^n=2/\{(n+1)(n+2)\}$
and $\E Z^n=n!\sum_{j\le n}2^j/j!$,
\[
  \frac{\E R^n}{n!}=\frac{2}{(n+1)(n+2)}\sum_{j\le n}\frac{2^j}{j!}
  \;\le\;\frac{2e^2}{(n+1)(n+2)},
\]
which equals $1$ for $n\in\{0,1\}$, equals $5/6$ at $n=2$, and for $n\ge3$ is at most
$2e^2/20<5/6$. Hence
$\MR(t)<(1-t)^{-1}$ strictly on $(0,1)$, that is $\Psi_1(t)<1=m_1$ there; moreover
$\tau=1$ with
\[
  \MR(1)=2e^2\int_2^\infty\frac{1-e^{-y}-ye^{-y}}{y^2}\,dy
        =2e^2\Bigl(\frac12-\frac{e^{-2}}{2}\Bigr)=e^2-1<\infty,
\]
so $\Psi_1(1)=1-(e^2-1)^{-1}=0.8435<1$, and $\Psi_1(t)=1/t<1$ for $t>1$. Thus \eqref{eq:purelocal} holds, together with $\tau>0$, $m_1>0$ and $\kappa>0$.
But $m_1=1=1/\tau$, and the same moment bound makes $c=1$ feasible for every
$\theta\ge1$, so with Proposition~\ref{prop:floor}
\[
  \cstar(v)\equiv1\qquad\text{for every }v\ge V .
\]
The frontier is constant: without the strict inequality $m_1>1/\tau$ the conclusions of
Theorem~\ref{thm:drop}, Theorem~\ref{thm:edge} and Corollary~\ref{cor:neveropt} all
fail for this law, and its hypothesis $\cstar(V)>\max\{0,1/\tau\}$ is exactly what fails.
\end{example}

\subsection{The zero set and a first upper bound}

\begin{proposition}[Zero dichotomy]\label{prop:zero}
$\cstar(v)=0$ for some $v>0$ if and only if $\nu((0,\infty))=0$, and then $\cstar\equiv0$
on $[V,\infty)$. Equivalently, if $X$ has positive jumps then the frontier is strictly
positive at every $v$, however large.
\end{proposition}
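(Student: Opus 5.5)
The proof reduces, via Theorem~\ref{thm:var}, to deciding when $\sup_{t>0}\Psi_\theta(t)\le0$, that is, when $\MR(t)\le\theta$ for every $t>0$. We treat the two directions separately.

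\emph{No positive jumps.} Suppose $\nu((0,\infty))=0$. Then $H_X$ is supported on $(-\infty,0]$, so $Z\le0$ almost surely and hence $R=BZ\le0$. It follows that $\MR(t)=\E e^{tR}\le1\le\theta$ for every $t>0$ and every $\theta\ge1$, so $\Psi_\theta(t)\le0$. Theorem~\ref{thm:var} then gives $\cstar(v)=0$ for all $v\ge V$. (Here $\tau=\infty$ automatically, so the floor of Proposition~\ref{prop:floor} imposes nothing.) This proves the ``if'' direction and the claim $\cstar\equiv0$ on $[V,\infty)$.

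\emph{Positive jumps.} Suppose $\nu((0,\infty))>0$ and fix any $v>0$. If $v<V$, Proposition~\ref{prop:degenerate}(i) already gives $\cstar(v)=\infty>0$, so assume $\theta\ge1$. If $\tau<\infty$, Proposition~\ref{prop:floor} gives $\cstar(v)\ge1/\tau>0$; this includes $\tau=0$, where $\cstar=\infty$. It remains to treat $\tau=\infty$. Since $x^2\nu(dx)$ charges $(0,\infty)$, we have $\PP(Z>0)>0$, and since $B>0$ almost surely, $\PP(R>\varepsilon)=p>0$ for some $\varepsilon>0$. Then
\[
\MR(t)\ge p\,e^{t\varepsilon}\to\infty \quad\text{as } t\to\infty,
\]
so for $t$ large we have $\MR(t)>\theta$, hence $\Psi_\theta(t)>0$, and Theorem~\ref{thm:var} gives $\cstar(v)>0$.

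The argument is routine. The only point requiring care is the passage from ``$\nu$ charges $(0,\infty)$'' to ``$R$ has positive mass above some $\varepsilon>0$''. This uses that $x^2\nu(dx)$ restricted to $(0,\infty)$ is a nonzero measure, since $x^2>0$ there, and that $B$ has a density on $(0,1)$.
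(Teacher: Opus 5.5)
Your proof is correct and follows the paper's argument: $R\le0$ gives $\MR\le1\le\theta$ in one direction, and $\MR(t)\to\infty$ forces $\Psi_\theta(t)>0$ in the other. Your case split on $\tau$ (via the floor) and on $v<V$ (via Proposition~\ref{prop:degenerate}(i)) is harmless but unnecessary. The bound $\MR(t)\ge p\,e^{t\varepsilon}$ holds for every $\tau$ and every $\theta>0$, and when $\MR(t)=\infty$ it gives $\Psi_\theta(t)=1/t>0$ directly.
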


\begin{proof}
If $\nu((0,\infty))=0$ then $Z\le0$, hence $R\le0$ almost surely, so $\MR(t)\le1\le\theta$
for $t>0$ and $\theta\ge1$; thus $\Psi_\theta\le0$. Conversely, if $\nu((0,\infty))>0$
then $\PP(R>0)>0$, and $\E[e^{tR}\mathbf 1_{\{R\ge0\}}]\uparrow\infty$ as $t\to\infty$ by
monotone convergence while $\E[e^{tR}\mathbf 1_{\{R<0\}}]\in[0,1]$; hence
$\MR(t)\to\infty$, possibly through the value $+\infty$, so $\MR(t)>\theta$ for some $t$
and $\Psi_\theta(t)>0$.
\end{proof}

\begin{proposition}[Threshold bound]\label{prop:tth}
For $\theta\ge1$ set $t_\theta=\inf\{t>0:\MR(t)>\theta\}\in[0,\infty]$, with
$1/0=\infty$. Then $\cstar(v)\le1/t_\theta$.
\end{proposition}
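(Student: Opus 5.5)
The plan is to combine the exact formula of Theorem~\ref{thm:var} with a pointwise bound $\Psi_\theta(t)\le1/t_\theta$ for every $t>0$. Fix $\theta\ge1$. If $t_\theta=0$ the claimed bound reads $\cstar(v)\le\infty$ and there is nothing to prove. Assume therefore $t_\theta>0$, possibly $t_\theta=\infty$, and split the half-line $t>0$ at $t_\theta$.

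On $(0,t_\theta)$ the definition of $t_\theta$ as an infimum gives $\MR(t)\le\theta$. In particular $\MR(t)$ is finite there, so $\theta/\MR(t)\ge1$. Then
\[
\Psi_\theta(t)=\frac1t\Bigl(1-\frac{\theta}{\MR(t)}\Bigr)\le0\le\frac1{t_\theta}.
\]
On $[t_\theta,\infty)$, which is empty when $t_\theta=\infty$, I use the universal bound $\Psi_\theta(t)\le1/t$ recorded after \eqref{eq:psi}. Together with $t\ge t_\theta$ this gives $\Psi_\theta(t)\le1/t\le1/t_\theta$. The inequality $\Psi_\theta\le1/t$ holds even when $\MR(t)=\infty$, by the convention $\theta/\infty=0$. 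So the point $t=t_\theta$ itself, and any region beyond $\tau$, need no separate treatment.

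Hence $\sup_{t>0}\Psi_\theta\le1/t_\theta$. Since $1/t_\theta\ge0$, Theorem~\ref{thm:var} yields
\[
\cstar(v)=\max\bigl\{0,\sup_{t>0}\Psi_\theta\bigr\}\le\frac1{t_\theta}.
\]
When $t_\theta=\infty$ this says $\cstar(v)=0$, consistent with Proposition~\ref{prop:zero}.

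No step is a real obstacle. The only care needed is with the extended-value conventions: the case $t_\theta=0$, infinite values of $\MR$, and the meaning of $1/t_\theta$ at the endpoints. These are all absorbed by the conventions $1/0=\infty$, $1/\infty=0$ and $\theta/\infty=0$ fixed in Section~\ref{sec:setting}.
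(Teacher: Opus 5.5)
Your proposal is correct and follows the paper's proof. You split $(0,\infty)$ at $t_\theta$, use $\MR\le\theta$ to get $\Psi_\theta\le0$ below $t_\theta$ and $\Psi_\theta\le1/t\le1/t_\theta$ at or above it, then apply Theorem~\ref{thm:var}; your explicit handling of $t_\theta=0$, $t_\theta=\infty$ and the maximum with $0$ only spells out what the paper leaves implicit.
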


\begin{proof}
For $0<t<t_\theta$ we have $\MR(t)\le\theta$ and $\Psi_\theta(t)\le0$; for $t\ge t_\theta$
we have $\Psi_\theta(t)\le1/t\le1/t_\theta$. Apply Theorem~\ref{thm:var}.
\end{proof}

\begin{proposition}[Critical proxy]\label{prop:vstar}
Suppose $0<\tau<\infty$ and define $v^*=\inf\{v\ge V:\cstar(v)=1/\tau\}\in[V,\infty]$. Then
$v^*\le V\max\{1,\MR(\tau^-)\}$, where $\MR(\tau^-)=\lim_{t\uparrow\tau}\MR(t)$, and in all
cases
\[
  \cstar(v)\downarrow\frac1\tau\qquad\text{as }v\to\infty .
\]
Beyond $v^*$ the frontier is constant, so no further relaxation of the proxy is useful.
\end{proposition}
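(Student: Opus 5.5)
The argument rests on three facts already established: the floor $\cstar(v)\ge1/\tau$ of Proposition~\ref{prop:floor}, the monotonicity of $\cstar$ (Theorem~\ref{thm:convex}), and the splitting
\[
\sup_{t>0}\Psi_\theta=\max\Bigl\{\sup_{(0,\tau]}\Psi_\theta,\ 1/\tau\Bigr\},
\]
which follows from Lemma~\ref{lem:psicont}. The plan is to show that as soon as $\theta\ge\max\{1,\MR(\tau^-)\}$, every value of $\Psi_\theta$ on $(0,\tau]$ is at most $1/\tau$. Then $\cstar(\theta V)=1/\tau$, so $v^*\le V\max\{1,\MR(\tau^-)\}$ whenever $\MR(\tau^-)<\infty$. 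The "beyond $v^*$ constant" clause is immediate, because $\cstar$ is nonincreasing and bounded below by $1/\tau$, so once it reaches $1/\tau$ it stays there. The infimum defining $v^*$ is attained, since $\cstar$ is continuous by Proposition~\ref{prop:cont}.

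For the key step, $\MR$ is nondecreasing on $(0,\tau)$ where $\MR\ge1$. Indeed $\MR$ is convex with $\MR(0)=1$ and $\MR'(0^+)=m_1$ may be negative, so I would not assume monotonicity outright. Instead I use the direct bound: for $t\in(0,\tau)$ we have $\MR(t)\le\max\{1,\MR(\tau^-)\}\le\theta$. This holds because a convex function on $[0,\tau)$ is dominated by the maximum of its endpoint values $\MR(0)=1$ and $\MR(\tau^-)$, where the limit exists in $(0,\infty]$ by convexity and equals $\MR(\tau)$ by Lemma~\ref{lem:psicont}. Hence $\Psi_\theta(t)=t^{-1}(1-\theta/\MR(t))\le0$ on $(0,\tau)$. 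At $t=\tau$, either $\MR(\tau)=\infty$, giving $\Psi_\theta(\tau)=1/\tau$, or $\Psi_\theta(\tau)<1/\tau$. Either way $\sup_{t>0}\Psi_\theta=1/\tau$, and since $\tau$ is finite this value is positive, so $\cstar(\theta V)=1/\tau$ by Theorem~\ref{thm:var}.

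The case $\MR(\tau^-)=\infty$ is where I expect the main obstacle, since the bound on $v^*$ is then vacuous but the convergence $\cstar(v)\downarrow1/\tau$ must still be proved. Here I would fix $\varepsilon>0$ and set $\delta$ so that $1/(\tau-\delta)<1/\tau+\varepsilon$. On $(0,\tau-\delta]$, $\MR$ is finite and continuous, so $M_\delta:=\sup_{(0,\tau-\delta]}\MR<\infty$ by the same convexity bound. Taking $\theta\ge M_\delta$ makes $\Psi_\theta\le0$ on that interval. On $[\tau-\delta,\infty)$ we always have $\Psi_\theta\le1/t\le1/(\tau-\delta)$. Therefore $\cstar(\theta V)\le1/\tau+\varepsilon$ for all large $\theta$. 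Combined with the floor and monotonicity, this gives the monotone convergence $\cstar(v)\downarrow1/\tau$. It covers the finite case as well, though that case already reaches the limit at a finite $v$.
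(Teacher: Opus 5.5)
Your proof is correct and is essentially the paper's argument. It uses the convexity bound $\MR(t)\le\max\{1,\MR(\tau^-)\}$ on $[0,\tau)$, the floor of Proposition~\ref{prop:floor}, and monotonicity of $\cstar$, and it obtains the limit by bounding $\MR$ on $(0,\tau-\delta]$ by $\max\{1,\MR(\tau-\delta)\}$. The only difference is cosmetic: you split $\Psi_\theta$ at $\tau$ directly, whereas the paper phrases the same computation through the threshold $t_\theta\uparrow\tau$ and Proposition~\ref{prop:tth}.
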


\begin{proof}
Let $\theta\ge\max\{1,\MR(\tau^-)\}$. Since $\MR$ is convex on $[0,\tau)$ with $\MR(0)=1$, it
attains its supremum there at an endpoint, so $\MR(t)\le\max\{1,\MR(\tau^-)\}\le\theta$ for
every $t<\tau$; $\MR$ need not be monotone, so it is convexity that is used here. Hence
$t_\theta=\tau$ and Proposition~\ref{prop:tth} gives $\cstar(v)\le1/\tau$; with
Proposition~\ref{prop:floor} this is an equality, proving
$v^*\le V\max\{1,\MR(\tau^-)\}$. In general $t_\theta\uparrow\tau$ as $\theta\to\infty$:
for any $s<\tau$ convexity again gives $\sup_{t\le s}\MR=\max\{1,\MR(s)\}<\infty$, so
$t_\theta\ge s$ as soon as $\theta$ exceeds that value. Hence
$\cstar(v)\le1/t_\theta\to1/\tau$, while $\cstar(v)\ge1/\tau$
throughout; monotonicity gives the decreasing convergence. Constancy beyond $v^*$ follows
from Proposition~\ref{prop:floor} and monotonicity.
\end{proof}

\section{Edge behaviour and decay}\label{sec:asym}

\subsection{The square-root edge law}

Assume, here and wherever $m_2$ or $\kappa$ appears below --- but not in
Section~\ref{sec:asym}'s second subsection, which needs no moment condition beyond the
standing one --- that $\int_{|x|>1}x^4\,\nu(dx)<\infty$, so that
\begin{equation}\label{eq:m2}
  m_2=\E R^2=\frac{\kappa_4}{6V},
\end{equation}
and set
\begin{equation}\label{eq:kappa}
  \kappa=m_1^2-\frac{m_2}{2}=\frac{\kappa_3^2}{9V^2}-\frac{\kappa_4}{12V}.
\end{equation}
When $\tau>0$, expanding \eqref{eq:psi} at the origin gives $\Psi_1(t)=m_1-\kappa t+o(t)$
--- the expansion of $\MR$ to second order follows from dominated convergence, using
$R^2(e^{t_0R}-1-t_0R)/(t_0R)^2$ on $\{R\ge0\}$ for a fixed $t_0\in(0,\tau)$ and
$R^2/2$ on $\{R<0\}$, the latter because $z\mapsto(e^z-1-z)/z^2$ increases on $\R$ --- so $\kappa>0$
implies that $\Psi_1$ decreases at the origin; the converse can fail at $\kappa=0$, as
Example~\ref{ex:skellam} at $p=p_+$ shows. Equivalently $\kappa<0$ is
the condition $3V\kappa_4>4\kappa_3^2$ of \cite{ChenWang2026a}. It excludes local control
but does not by itself force an interior maximiser: for the tempered stable family of
Example~\ref{ex:ts} one computes $\kappa=(2-Y)(-1-Y)/(36M^2)$, so $\kappa<0$ exactly when
$Y>-1$, and Proposition~\ref{prop:ts} shows that the whole range $-1<Y<2$ is boundary
controlled with a constant frontier. Pure local
control \eqref{eq:purelocal} already forces $\kappa\ge0$: if $\kappa<0$ then
$\Psi_1(t)>m_1$ for all small $t>0$, contradicting \eqref{eq:purelocal}. Assuming
$\kappa>0$ therefore excludes only the boundary case $\kappa=0$, which is treated
separately in Theorem~\ref{thm:cube} and which does occur: it is where the Skellam
transition of Example~\ref{ex:skellam} sits.

For that boundary case one order more is needed. Assume, wherever $m_3$ or $\gamma$
appears below, that $\int_{|x|>1}|x|^5\nu(dx)<\infty$, so that
\begin{equation}\label{eq:m3}
  m_3=\E R^3=\frac{\kappa_5}{10V},
\end{equation}
and set
\begin{equation}\label{eq:gamma}
  \gamma=m_1m_2-m_1^3-\frac{m_3}{6}
        =\frac{\kappa_3\kappa_4}{18V^2}-\frac{\kappa_3^3}{27V^3}-\frac{\kappa_5}{60V}.
\end{equation}
Then, when $\tau>0$, the expansion above continues to one further order,
\begin{equation}\label{eq:psi2}
  \Psi_1(t)=m_1-\kappa t-\gamma t^2+o(t^2),\qquad t\downarrow0 .
\end{equation}
The third-order expansion of $\MR$ again follows from dominated convergence: writing
$e^z-1-z-\tfrac{z^2}{2}=z^3h_3(z)$ with
$h_3(z)=\tfrac12\int_0^1(1-u)^2e^{zu}\,du$ increasing in $z$, one dominates
$R^3h_3(tR)$ by $\{e^{t_0R}-1-t_0R-(t_0R)^2/2\}/t_0^3$ on $\{R\ge0\}$ for a fixed
$t_0\in(0,\tau)$ and by $|R|^3/6$ on $\{R<0\}$. Exactly as for $\kappa$, pure local
control \eqref{eq:purelocal} together with $\kappa=0$ forces $\gamma\ge0$, since
$\gamma<0$ would give $\Psi_1(t)>m_1$ for all small $t>0$.

\begin{theorem}[Square-root edge]\label{thm:edge}
Assume $\int_{|x|>1}x^4\nu(dx)<\infty$, $\tau>0$, $m_1>0$, $\kappa>0$ with $m_2$ and
$\kappa$ as in \eqref{eq:m2} and \eqref{eq:kappa}, pure local control
\eqref{eq:purelocal}, and $m_1>1/\tau$ when $\tau<\infty$. Then,
with $\varepsilon=\theta-1\downarrow0$,
\begin{equation}\label{eq:edge}
  \cstar(\theta V)=m_1-2\sqrt{\kappa\varepsilon}+o(\sqrt{\varepsilon}),
\end{equation}
and any maximiser $\hat t_\theta$ of $\Psi_\theta$ satisfies
$\hat t_\theta=\sqrt{\varepsilon/\kappa}\,(1+o(1))$. In particular $\cstar$ has one-sided
derivative $-\infty$ at $v=V$.
\end{theorem}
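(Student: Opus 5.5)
Using the penalty decomposition \eqref{eq:pendecomp}, I would write $\Psi_\theta=\Psi_1-\varepsilon\phi$ with $\phi(t)=\{t\MR(t)\}^{-1}$. I would then split the supremum in \eqref{eq:varformula} into a near region $(0,\delta]$ and a far region $[\delta,\infty)$. Lemma~\ref{lem:away} supplies the far-region bound $\sup_{t\ge\delta}\Psi_\theta\le\sup_{t\ge\delta}\Psi_1<m_1$. On the near region I expect the supremum to be close to $m_1$ once $\varepsilon$ is small, so for each fixed $\delta$ the far region becomes irrelevant and every maximiser lies in $(0,\delta)$. Since $\delta$ is arbitrary, this shows $\hat t_\theta\to0$. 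The main work is therefore the asymptotics on a shrinking neighbourhood of the origin.

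Near the origin the second-order expansion gives $\Psi_1(t)=m_1-\kappa t+o(t)$. I would also need $\phi(t)=1/t-m_1+o(1)$, which follows from $\MR(t)=1+m_1t+O(t^2)$. Together these give, uniformly for $t\in(0,\delta]$,
\[
  \Psi_\theta(t)=m_1-\kappa t-\frac{\varepsilon}{t}+\varepsilon m_1+r(t)t+o(\varepsilon),\qquad r(t)\to0 .
\]
For the upper bound, I choose $\delta$ small enough that $|r|\le\eta$ on $(0,\delta]$. This bounds the near-region supremum by $m_1+\varepsilon m_1-\min_{t>0}\{(\kappa-\eta)t+\varepsilon/t\}$, which equals $m_1-2\sqrt{(\kappa-\eta)\varepsilon}+O(\varepsilon)$. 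For the lower bound, I evaluate at $t=\sqrt{\varepsilon/\kappa}$ and get $m_1-2\sqrt{\kappa\varepsilon}+o(\sqrt\varepsilon)$. Letting $\eta\downarrow0$ then yields \eqref{eq:edge}. Because this value is positive for small $\varepsilon$, the $\max\{0,\cdot\}$ in \eqref{eq:varformula} plays no role.

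For the maximiser, I would use the same sandwich argument. Write $t=s\sqrt{\varepsilon}$; any near maximiser satisfies
\[
  \bigl(\kappa s+1/s\bigr)\sqrt\varepsilon\le 2\sqrt{\kappa\varepsilon}+o(\sqrt\varepsilon)+\eta s\sqrt\varepsilon .
\]
The function $\kappa s+1/s$ has a strict minimum at $s=\kappa^{-1/2}$ and grows linearly for large $s$, so $s\to\kappa^{-1/2}$, which is $\hat t_\theta\sim\sqrt{\varepsilon/\kappa}$. Finally, dividing $\cstar(\theta V)-\cstar(V)=-2\sqrt{\kappa\varepsilon}(1+o(1))$ by $V\varepsilon$ gives the one-sided derivative $-\infty$.

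The main obstacle is the uniformity of the remainders in the near region. The error term $\varepsilon\cdot o(1)$ in $\phi$ is harmless. What needs care is confirming that $\Psi_1(t)-m_1+\kappa t=o(t)$ holds uniformly as $t\downarrow0$, so that it can be absorbed as $\eta t$ with $\eta$ small. This holds because it is a single-variable expansion, and the dominated-convergence justification given after \eqref{eq:kappa} already supplies it.
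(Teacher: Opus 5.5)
Your proposal is correct and follows essentially the same route as the paper. Both arguments use the penalty decomposition and cut off the far region with Lemma~\ref{lem:away}. Both compare $\Psi_\theta$ two-sidedly with $m_1-\kappa t-\varepsilon/t$ on $(0,\delta]$; the paper simply absorbs your $\varepsilon m_1+o(\varepsilon)$ into a uniform bound $C\varepsilon$. Both evaluate at $t=\sqrt{\varepsilon/\kappa}$ for the lower bound and rescale the maximiser through $(\kappa-\eta)s+1/s$.

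The paper adds one thing: a short existence argument for a maximiser. On $(0,\varepsilon)$ the values of $\Psi_\theta$ are near $m_1-1$, and $\Psi_\theta$ is continuous on the compact interval $[\varepsilon,\delta]$. Your phrasing in terms of near maximisers does not need this, but it makes ``any maximiser'' non-vacuous.
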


\begin{proof}
Write $\Psi_\theta(t)=\Psi_1(t)-\varepsilon\bigl(t\MR(t)\bigr)^{-1}$ and
$\Psi_1(t)=m_1-\kappa t+r(t)$ with $r(t)=o(t)$ as $t\downarrow0$. Since $\MR(t)=1+O(t)$
near the origin there are $\delta_0\in(0,\tau)$ and $C<\infty$ with
$\bigl|\varepsilon(t\MR(t))^{-1}-\varepsilon/t\bigr|\le C\varepsilon$ for
$0<t\le\delta_0$. Hence
\begin{equation}\label{eq:edgeexp}
  \Bigl|\Psi_\theta(t)-\Bigl(m_1-\kappa t-\frac{\varepsilon}{t}\Bigr)\Bigr|
  \;\le\;|r(t)|+C\varepsilon,
  \qquad 0<t\le\delta_0 .
\end{equation}

\emph{Lower bound on $\cstar$.} Evaluating \eqref{eq:edgeexp} at
$t=\sqrt{\varepsilon/\kappa}$, where $-\kappa t-\varepsilon/t=-2\sqrt{\kappa\varepsilon}$,
gives $\cstar(\theta V)\ge\Psi_\theta(t)\ge m_1-2\sqrt{\kappa\varepsilon}
-o(\sqrt\varepsilon)$, so
\begin{equation}\label{eq:edgeUB}
  \limsup_{\varepsilon\downarrow0}\frac{m_1-\cstar(\theta V)}{\sqrt\varepsilon}\;\le\;2\sqrt\kappa .
\end{equation}

\emph{Upper bound on $\cstar$.} Fix $\eta\in(0,\kappa)$ and choose
$\delta=\delta(\eta)\in(0,\delta_0)$, such that $|r(t)|\le\eta t$ for $t\le\delta$. Then
\eqref{eq:edgeexp} gives, for $0<t\le\delta$,
\[
  \Psi_\theta(t)\;\le\;m_1-(\kappa-\eta)t-\frac{\varepsilon}{t}+C\varepsilon
  \;\le\;m_1-2\sqrt{(\kappa-\eta)\varepsilon}+C\varepsilon .
\]
By \eqref{eq:away} of Lemma~\ref{lem:away} there is $\eta_\delta>0$ with
$\sup_{t\ge\delta}\Psi_1\le m_1-\eta_\delta$, and $\Psi_\theta\le\Psi_1$ pointwise. The
lower bound of the previous paragraph gives
$\sup_{t>0}\Psi_\theta\ge m_1-2\sqrt{\kappa\varepsilon}-o(\sqrt\varepsilon)$, which exceeds
$m_1-\eta_\delta$ for all small $\varepsilon$; hence for such $\varepsilon$ the supremum of
$\Psi_\theta$ is not approached on $[\delta,\infty)$ and
\[
  \cstar(\theta V)=\sup_{0<t\le\delta}\Psi_\theta(t)
  \;\le\;m_1-2\sqrt{(\kappa-\eta)\varepsilon}+C\varepsilon .
\]
Therefore $\liminf_{\varepsilon\downarrow0}(m_1-\cstar)/\sqrt\varepsilon\ge2\sqrt{\kappa-\eta}$
for every $\eta\in(0,\kappa)$, hence $\ge2\sqrt\kappa$. With \eqref{eq:edgeUB} this proves
\eqref{eq:edge}.

\emph{Existence of a maximiser.} By the previous paragraph the supremum for small
$\varepsilon$ is taken over the compact interval $[\varepsilon,\delta]$, on which
$\Psi_\theta$ is continuous, and by \eqref{eq:edgeexp} the values on $(0,\varepsilon)$ are
at most $m_1-1+(\eta+C)\varepsilon$, below the supremum. A maximiser therefore exists.

\emph{The maximiser.} Let $\hat t_\theta$ be any maximiser. By the previous paragraph
$\hat t_\theta\le\delta(\eta)$ for small $\varepsilon$, so
\[
  (\kappa-\eta)\hat t_\theta+\frac{\varepsilon}{\hat t_\theta}
  \;\le\;m_1+C\varepsilon-\Psi_\theta(\hat t_\theta)
  \;=\;2\sqrt{\kappa\varepsilon}+o(\sqrt\varepsilon).
\]
Writing $\hat t_\theta=s_\varepsilon\sqrt{\varepsilon/\kappa}$ and dividing by
$\sqrt{\kappa\varepsilon}$ yields
$\bigl(1-\eta/\kappa\bigr)s_\varepsilon+s_\varepsilon^{-1}\le2+o(1)$. Letting
$\varepsilon\downarrow0$ and then $\eta\downarrow0$ forces $s_\varepsilon\to1$, that is
$\hat t_\theta\sim\sqrt{\varepsilon/\kappa}$.

Finally, with $v=\theta V=V(1+\varepsilon)$,
\[
  \frac{\cstar(v)-\cstar(V)}{v-V}
  =\frac{-2\sqrt{\kappa\varepsilon}+o(\sqrt\varepsilon)}{V\varepsilon}
  \sim-\frac{2}{V}\sqrt{\frac{\kappa}{\varepsilon}}\;\longrightarrow\;-\infty,
\]
so the right derivative of $\cstar$ at $v=V$ is $-\infty$.
\end{proof}

At $\kappa=0$ the same method applies one order further and returns a cube-root law. This
settles the case left open by Theorem~\ref{thm:edge} under pure local control.

\begin{theorem}[Cube-root edge at the boundary case]\label{thm:cube}
Assume $\int_{|x|>1}|x|^5\nu(dx)<\infty$, $\tau>0$, $m_1>0$, pure local control
\eqref{eq:purelocal}, and $m_1>1/\tau$ when $\tau<\infty$. Assume further that $\kappa=0$
and $\gamma>0$, with $m_3$ and $\gamma$ as in \eqref{eq:m3} and \eqref{eq:gamma}. Then, with
$\varepsilon=\theta-1\downarrow0$,
\begin{equation}\label{eq:cube}
  \cstar(\theta V)=m_1-3\Bigl(\frac{\gamma\varepsilon^2}{4}\Bigr)^{1/3}
                   +o\bigl(\varepsilon^{2/3}\bigr),
\end{equation}
and any maximiser $\hat t_\theta$ of $\Psi_\theta$ satisfies
$\hat t_\theta=(\varepsilon/2\gamma)^{1/3}(1+o(1))$. In particular $\cstar$ again has
one-sided derivative $-\infty$ at $v=V$.
\end{theorem}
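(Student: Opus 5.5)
I will follow the proof of Theorem~\ref{thm:edge} essentially line by line, replacing the model function $m_1-\kappa t-\varepsilon/t$ by $m_1-\gamma t^2-\varepsilon/t$.

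\emph{Model problem.} Minimising $g(t)=\gamma t^2+\varepsilon/t$ over $t>0$ gives $2\gamma t=\varepsilon/t^2$, so the minimiser is $t_0=(\varepsilon/2\gamma)^{1/3}$. The minimum value is
\[
\gamma t_0^2+\varepsilon/t_0=3\gamma t_0^2=3\bigl(\gamma\varepsilon^2/4\bigr)^{1/3}.
\]
This value is of order $\varepsilon^{2/3}$, which is much larger than $\varepsilon$.

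\emph{Comparison bound.} Write $\Psi_\theta=\Psi_1-\varepsilon\phi$ using \eqref{eq:pendecomp}. By \eqref{eq:psi2} with $\kappa=0$, we have $\Psi_1(t)=m_1-\gamma t^2+r(t)$ with $r(t)=o(t^2)$. As in \eqref{eq:edgeexp}, $\MR(t)=1+O(t)$ gives $|\varepsilon\phi(t)-\varepsilon/t|\le C\varepsilon$ on $(0,\delta_0]$. Hence
\[
\bigl|\Psi_\theta(t)-(m_1-\gamma t^2-\varepsilon/t)\bigr|\le|r(t)|+C\varepsilon \quad\text{on } (0,\delta_0].
\]

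\emph{Lower bound on $\cstar$.} Evaluate at $t=t_0$. There $|r(t_0)|=o(\varepsilon^{2/3})$ and $C\varepsilon=o(\varepsilon^{2/3})$. This gives
\[
\cstar(\theta V)\ge m_1-3(\gamma\varepsilon^2/4)^{1/3}-o(\varepsilon^{2/3}).
\]

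\emph{Upper bound on $\cstar$.}
\begin{itemize}
\item Fix $\eta\in(0,\gamma)$ and choose $\delta<\delta_0$ with $|r(t)|\le\eta t^2$ on $(0,\delta]$.
\item On $(0,\delta]$ we then get $\Psi_\theta\le m_1-3((\gamma-\eta)\varepsilon^2/4)^{1/3}+C\varepsilon$.
\item On $[\delta,\infty)$ we use $\Psi_\theta\le\Psi_1\le m_1-\eta_\delta$. This follows from \eqref{eq:away} of Lemma~\ref{lem:away}, whose hypotheses (pure local control, $m_1>0$, $m_1>1/\tau$) are assumed.
\item For small $\varepsilon$ the lower bound exceeds $m_1-\eta_\delta$, so the supremum is taken over $(0,\delta]$.
\end{itemize}
Letting $\varepsilon\downarrow0$ and then $\eta\downarrow0$ yields \eqref{eq:cube}, because $C\varepsilon=o(\varepsilon^{2/3})$.

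\emph{Existence of a maximiser.} On $(0,\varepsilon]$ we have $\Psi_\theta\le m_1-1+O(\varepsilon)$, which is below the supremum. So the supremum is taken over the compact interval $[\varepsilon,\delta]$. By Lemma~\ref{lem:psicont}, $\Psi_\theta$ is continuous there, so the supremum is attained.

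\emph{Localisation of the maximiser.} Any maximiser $\hat t$ lies in $(0,\delta]$ and satisfies
\[
(\gamma-\eta)\hat t^2+\varepsilon/\hat t\le 3(\gamma\varepsilon^2/4)^{1/3}+o(\varepsilon^{2/3}).
\]
Put $\hat t=s\,t_0$ and divide by $\gamma t_0^2=\varepsilon/(2t_0)$. This gives
\[
(1-\eta/\gamma)s^2+2/s\le3+o(1).
\]
The function $s\mapsto s^2+2/s$ has its unique minimum $3$ at $s=1$, and it is strictly convex. Letting $\varepsilon\downarrow0$ and then $\eta\downarrow0$ therefore forces $s\to1$.

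\emph{One-sided derivative.} The difference quotient satisfies
\[
\frac{\cstar(v)-\cstar(V)}{v-V}\sim-\frac{3(\gamma/4)^{1/3}\,\varepsilon^{2/3}}{V\varepsilon}\to-\infty.
\]
Here $\cstar(V)=m_1$ by Lemma~\ref{lem:away}.

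\emph{Main obstacle.} The only step needing real care is the error bookkeeping. The $C\varepsilon$ term from replacing $\phi$ by $1/t$ must be negligible against $\varepsilon^{2/3}$; it is. The remainder $r(t)=o(t^2)$ must be absorbed uniformly near $t_0\sim\varepsilon^{1/3}$; the $\eta t^2$ device handles this. Both go through exactly as in the square-root case.
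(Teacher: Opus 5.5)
Your proposal is correct and follows the paper's proof essentially step for step. It uses the same comparison bound with error $|r(t)|+C\varepsilon$, the same $\eta t^2$ absorption combined with Lemma~\ref{lem:away} to confine the supremum to $(0,\delta]$, the same compactness argument on $[\varepsilon,\delta]$, and the same rescaled inequality $(1-\eta/\gamma)s^2+2/s\le 3+o(1)$ to locate the maximiser. The only cosmetic difference is that you minimise $\gamma t^2+\varepsilon/t$ by calculus, where the paper uses the three-term arithmetic--geometric mean inequality.
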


\begin{proof}
By \eqref{eq:psi2} with $\kappa=0$ write $\Psi_1(t)=m_1-\gamma t^2+r(t)$ with
$r(t)=o(t^2)$. As in the proof of Theorem~\ref{thm:edge} there are $\delta_0\in(0,\tau)$
and $C<\infty$ with $\bigl|\varepsilon(t\MR(t))^{-1}-\varepsilon/t\bigr|\le C\varepsilon$
for $0<t\le\delta_0$, so
\begin{equation}\label{eq:cubeexp}
  \Bigl|\Psi_\theta(t)-\Bigl(m_1-\gamma t^2-\frac{\varepsilon}{t}\Bigr)\Bigr|
  \;\le\;|r(t)|+C\varepsilon,\qquad 0<t\le\delta_0 .
\end{equation}
For $\gamma'>0$ the arithmetic--geometric mean inequality, applied to the three
terms $\gamma't^2$, $\varepsilon/(2t)$ and $\varepsilon/(2t)$, gives
\begin{equation}\label{eq:amgm3}
  \gamma't^2+\frac{\varepsilon}{t}\;\ge\;3\Bigl(\frac{\gamma'\varepsilon^2}{4}\Bigr)^{1/3},
  \qquad t>0,
\end{equation}
with equality exactly at $t=(\varepsilon/2\gamma')^{1/3}$.

\emph{Lower bound on $\cstar$.} Evaluating \eqref{eq:cubeexp} at
$t=(\varepsilon/2\gamma)^{1/3}$, where by \eqref{eq:amgm3} the bracket equals
$m_1-3(\gamma\varepsilon^2/4)^{1/3}$, and using $r(t)=o(t^2)=o(\varepsilon^{2/3})$ and
$C\varepsilon=o(\varepsilon^{2/3})$,
\begin{equation}\label{eq:cubeLB}
  \cstar(\theta V)\;\ge\;m_1-3\Bigl(\frac{\gamma\varepsilon^2}{4}\Bigr)^{1/3}
  -o\bigl(\varepsilon^{2/3}\bigr).
\end{equation}

\emph{Upper bound on $\cstar$.} Fix $\eta\in(0,\gamma)$ and choose
$\delta=\delta(\eta)\in(0,\delta_0)$ with $|r(t)|\le\eta t^2$ for $t\le\delta$. Then
\eqref{eq:cubeexp} and \eqref{eq:amgm3} give, for $0<t\le\delta$,
\[
  \Psi_\theta(t)\;\le\;m_1-(\gamma-\eta)t^2-\frac{\varepsilon}{t}+C\varepsilon
  \;\le\;m_1-3\Bigl(\frac{(\gamma-\eta)\varepsilon^2}{4}\Bigr)^{1/3}+C\varepsilon .
\]
By \eqref{eq:away} of Lemma~\ref{lem:away} there is $\eta_\delta>0$ with
$\sup_{t\ge\delta}\Psi_1\le m_1-\eta_\delta$, and $\Psi_\theta\le\Psi_1$ pointwise; by
\eqref{eq:cubeLB} the supremum of $\Psi_\theta$ exceeds $m_1-\eta_\delta$ for all small
$\varepsilon$, so for such $\varepsilon$ it is not approached on $[\delta,\infty)$ and
\[
  \cstar(\theta V)=\sup_{0<t\le\delta}\Psi_\theta(t)
  \;\le\;m_1-3\Bigl(\frac{(\gamma-\eta)\varepsilon^2}{4}\Bigr)^{1/3}+C\varepsilon .
\]
Hence $\liminf_{\varepsilon\downarrow0}(m_1-\cstar)\varepsilon^{-2/3}
\ge3\{(\gamma-\eta)/4\}^{1/3}$ for every $\eta\in(0,\gamma)$, and letting
$\eta\downarrow0$ and combining with \eqref{eq:cubeLB} proves \eqref{eq:cube}.

\emph{Existence of a maximiser.} On $(0,\varepsilon)$ one has $\varepsilon/t>1$ and
$|r(t)|\le\eta t^2\le\eta\varepsilon$ for $\varepsilon<1$, so \eqref{eq:cubeexp} bounds
$\Psi_\theta$ there by $m_1-1+(\eta+C)\varepsilon$, which is below the supremum for all
small $\varepsilon$. The supremum is therefore taken over the compact interval
$[\varepsilon,\delta]$, on which $\Psi_\theta$ is continuous, and is attained.

\emph{The maximiser.} Let $\hat t_\theta$ be any maximiser, so $\hat t_\theta\le\delta$
for small $\varepsilon$. By \eqref{eq:cubeexp} and \eqref{eq:cube},
\[
  (\gamma-\eta)\hat t_\theta^{\,2}+\frac{\varepsilon}{\hat t_\theta}
  \;\le\;m_1+C\varepsilon-\Psi_\theta(\hat t_\theta)
  \;=\;3\Bigl(\frac{\gamma\varepsilon^2}{4}\Bigr)^{1/3}+o\bigl(\varepsilon^{2/3}\bigr).
\]
Write $\hat t_\theta=s_\varepsilon(\varepsilon/2\gamma)^{1/3}$ and divide by
$(\gamma\varepsilon^2/4)^{1/3}$; since $\gamma(\varepsilon/2\gamma)^{2/3}
=(\gamma\varepsilon^2/4)^{1/3}$ and $\varepsilon(2\gamma/\varepsilon)^{1/3}
=2(\gamma\varepsilon^2/4)^{1/3}$, this reads
\[
  \Bigl(1-\frac{\eta}{\gamma}\Bigr)s_\varepsilon^2+\frac{2}{s_\varepsilon}\;\le\;3+o(1).
\]
The function $s\mapsto s^2+2/s$ has a strict minimum $3$ at $s=1$, so letting
$\varepsilon\downarrow0$ and then $\eta\downarrow0$ forces $s_\varepsilon\to1$, that is
$\hat t_\theta\sim(\varepsilon/2\gamma)^{1/3}$.

Finally $\{\cstar(\theta V)-\cstar(V)\}/(V\varepsilon)\sim
-3(\gamma/4)^{1/3}V^{-1}\varepsilon^{-1/3}\to-\infty$.
\end{proof}

\begin{remark}\label{rem:general-edge}
Theorems~\ref{thm:edge} and \ref{thm:cube} are the cases $j=1$ and $j=2$ of one
computation. If pure local control holds with $m_1>\max\{0,1/\tau\}$ and
$\Psi_1(t)=m_1-\gamma_jt^j+o(t^j)$ for some integer $j\ge1$ and some $\gamma_j>0$, then
the same three steps give
\[
  \cstar(\theta V)=m_1-(j+1)\,j^{-j/(j+1)}\bigl(\gamma_j\varepsilon^{\,j}\bigr)^{1/(j+1)}
  +o\bigl(\varepsilon^{j/(j+1)}\bigr),
  \qquad
  \hat t_\theta\sim\Bigl(\frac{\varepsilon}{j\gamma_j}\Bigr)^{1/(j+1)} ,
\]
so the one-sided derivative at $v=V$ is $-\infty$ for every such $j$. Only the
extremal-value step changes: $\gamma_jt^j+\varepsilon/t$ is minimised at
$t=(\varepsilon/j\gamma_j)^{1/(j+1)}$ with value
$(j+1)j^{-j/(j+1)}(\gamma_j\varepsilon^{j})^{1/(j+1)}$. The response is always slower than
linear in $\varepsilon$, which is what Corollary~\ref{cor:neveropt} uses.
\end{remark}

\begin{remark}
Theorem~\ref{thm:edge} explains the numerical behaviour reported in
Section~\ref{sec:examples}: a relaxation of the proxy by five percent lowers the pole by
roughly thirteen percent because the response is of order $\sqrt\varepsilon$, not
$\varepsilon$. It also shows that the variance-exact point is a singular point of the
frontier, which is what makes it a poor place to stand for tail bounds; see
Section~\ref{sec:noloss}.
\end{remark}

\subsection{Decay as the proxy grows}

\begin{theorem}[Two-sided threshold bounds]\label{thm:asym}
Let $t_\theta$ be as in Proposition~\ref{prop:tth}. For every $\lambda>1$,
\begin{equation}\label{eq:sandwich}
  \Bigl(1-\frac1\lambda\Bigr)\frac{1}{t_{\lambda\theta}}
  \;\le\;\cstar(\theta V)\;\le\;\frac{1}{t_\theta}.
\end{equation}
Consequently, if
\begin{equation}\label{eq:ratio}
  \lim_{\theta\to\infty}\frac{t_{\lambda\theta}}{t_\theta}=1
  \qquad\text{for every fixed }\lambda>1,
\end{equation}
then $\cstar(\theta V)\sim1/t_\theta$ as $\theta\to\infty$.
\end{theorem}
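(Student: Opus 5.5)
The upper bound in \eqref{eq:sandwich} is Proposition~\ref{prop:tth}, so only the lower bound needs an argument. The plan is to evaluate the variational formula of Theorem~\ref{thm:var} at one well-chosen point just beyond the threshold $t_{\lambda\theta}$. First dispose of degenerate cases. If $t_{\lambda\theta}=\infty$ the left side is $0\le\cstar(\theta V)$ and there is nothing to prove. If $t_{\lambda\theta}=0$, then for every small $t>0$ there are points $s<t$ with $\MR(s)>\lambda\theta$. Because $\MR(s)\to1$ as $s\downarrow0$ when $\tau>0$, this forces $\tau=0$, and then $\cstar=\infty$ by Proposition~\ref{prop:degenerate}. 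So assume $0<t_{\lambda\theta}<\infty$.

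\textbf{Main step.} Fix $t>t_{\lambda\theta}$ close to it. By definition of the infimum there is $s\in[t_{\lambda\theta},t)$ with $\MR(s)>\lambda\theta$; the value $\MR(s)=\infty$ is allowed. At such an $s$,
\[
  \Psi_\theta(s)=\frac1s\Bigl(1-\frac{\theta}{\MR(s)}\Bigr)\ge\frac1s\Bigl(1-\frac1\lambda\Bigr)\ge\frac1t\Bigl(1-\frac1\lambda\Bigr),
\]
using the convention $\theta/\infty=0$. By Theorem~\ref{thm:var}, $\cstar(\theta V)\ge\sup\Psi_\theta\ge(1-1/\lambda)/t$. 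Letting $t\downarrow t_{\lambda\theta}$ gives the left inequality. Choosing $s$ via the infimum, rather than working at $t_{\lambda\theta}$ itself, avoids any continuity issue for $\MR$ at the threshold, including the jump at $\tau$ described in Lemma~\ref{lem:psicont}.

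\textbf{Asymptotic consequence.} Multiply \eqref{eq:sandwich} by $t_\theta$:
\[
  \Bigl(1-\frac1\lambda\Bigr)\frac{t_\theta}{t_{\lambda\theta}}\le t_\theta\,\cstar(\theta V)\le1 .
\]
Under \eqref{eq:ratio} the liminf of the middle term as $\theta\to\infty$ is at least $1-1/\lambda$, for every $\lambda>1$. Letting $\lambda\to\infty$ gives the limit $1$.

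The only delicate point is that this step needs $t_\theta\in(0,\infty)$ for large $\theta$. If $t_\theta=\infty$ eventually, both sides are $0$ and the statement is read trivially. By Proposition~\ref{prop:zero} this happens exactly when there are no positive jumps, since otherwise $\MR\to\infty$ and the threshold $t_\theta$ is finite. The case $t_\theta=0$ is excluded as above whenever $\tau>0$. The remainder of the argument is bookkeeping.
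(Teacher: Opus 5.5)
Your proof is correct and follows the paper's route. The upper bound is Proposition~\ref{prop:tth}; the lower bound evaluates the variational formula of Theorem~\ref{thm:var} at the threshold $t_{\lambda\theta}$; and the asymptotic statement comes from dividing by $1/t_\theta$ and letting $\lambda\to\infty$. The only difference is that you approach $t_{\lambda\theta}$ from above through points with $\MR>\lambda\theta$. This avoids the paper's case split between $t_{\lambda\theta}<\tau$, where continuity of $\MR$ is used, and $t_{\lambda\theta}=\tau$, where the floor of Proposition~\ref{prop:floor} is invoked.
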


\begin{proof}
The upper bound is Proposition~\ref{prop:tth}. For the lower bound take
$t=t_{\lambda\theta}$, the bound being trivial if $t=\infty$; if $t=0$ then $\tau=0$, so
$\cstar\equiv\infty$ by Proposition~\ref{prop:degenerate} and there is nothing to prove. If $t<\tau$ then $\MR$ is
continuous at $t$, so $\MR(t)\ge\lambda\theta$ and
$\Psi_\theta(t)\ge t^{-1}(1-1/\lambda)$; if $t=\tau$ then Proposition~\ref{prop:floor}
gives the stronger bound $\cstar(\theta V)\ge1/\tau=1/t$. Under \eqref{eq:ratio}, dividing
\eqref{eq:sandwich} by $1/t_\theta$ and letting $\theta\to\infty$ gives
$1-1/\lambda\le\liminf\cstar t_\theta\le\limsup \cstar t_\theta\le1$ for every
$\lambda>1$.
\end{proof}

\begin{corollary}[Bounded positive jumps]\label{cor:bounded}
Suppose the positive part of $\nu$ has bounded support with radius
\[
  b=\sup\{x>0:x\in\operatorname{supp}\nu\}\in(0,\infty).
\]
Then $\log\MR(t)/t\to b$, condition \eqref{eq:ratio} holds, and
\begin{equation}\label{eq:blog}
  \cstar(v)\;\sim\;\frac{b}{\log(v/V)}\qquad (v\to\infty).
\end{equation}
\end{corollary}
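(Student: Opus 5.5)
The plan has three steps. First show $\log\MR(t)/t\to b$. Then use this to verify \eqref{eq:ratio}. Finally apply Theorem~\ref{thm:asym}.

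\emph{Growth of $\MR$.} Since $Z\sim H_X/V$ and $H_X=a\delta_0+x^2\nu(dx)$, the positive part of $Z$ is supported in $(0,b]$. Moreover $b$ is the supremum of the support of $Z$: every neighbourhood of $b$ carries positive $x^2\nu$-mass, because $b>0$ lies in the closed support of $\nu$. As $B\in(0,1)$, $R=BZ\le b$ almost surely, so $\MR(t)\le e^{tb}$ for $t\ge0$. In particular $\tau=\infty$, and $\limsup\log\MR(t)/t\le b$.

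For the lower bound, fix $\eta\in(0,b)$. The event $\{Z>b-\eta/2,\ B>1-\eta/(2b)\}$ has probability $p_\eta>0$, and on it $R>b-\eta$. Hence $\MR(t)\ge p_\eta e^{t(b-\eta)}$, which gives $\liminf\log\MR(t)/t\ge b-\eta$. Letting $\eta\downarrow0$ yields $\log\MR(t)/t\to b$.

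\emph{Threshold asymptotics.} Since $\tau=\infty$, $\MR$ is finite, convex and continuous on $[0,\infty)$, and $\MR(t)\to\infty$ (Proposition~\ref{prop:zero} argument, since $\nu((0,\infty))>0$). So $t_\theta<\infty$ and $t_\theta\to\infty$ as $\theta\to\infty$: for any $s$, $\sup_{t\le s}\MR=\max\{1,\MR(s)\}$ by convexity, so $t_\theta\ge s$ once $\theta$ exceeds it.

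By continuity, $\MR(t_\theta)=\theta$ for $\theta\ge1$ large. Indeed $\MR(t)\le\theta$ for $t<t_\theta$, and there are points just to the right of $t_\theta$ with $\MR>\theta$. Taking logarithms, $\log\theta=\log\MR(t_\theta)=t_\theta(b+o(1))$, so
\[
t_\theta\sim\frac{\log\theta}{b}.
\]
For fixed $\lambda>1$, $\log(\lambda\theta)/\log\theta\to1$, hence $t_{\lambda\theta}/t_\theta\to1$. This is \eqref{eq:ratio}.

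\emph{Conclusion.} Theorem~\ref{thm:asym} now gives $\cstar(\theta V)\sim1/t_\theta\sim b/\log\theta$ with $\theta=v/V$, which is \eqref{eq:blog}.

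\emph{Main obstacle.} The only delicate point is the identity $\MR(t_\theta)=\theta$, or at least two-sided bounds $\MR(t_\theta)\in[\theta,\theta]$ up to a harmless factor. It relies on $\MR$ being finite and continuous everywhere, which holds because $\tau=\infty$. One must also avoid the degenerate situation where $\MR$ is flat at level $\theta$; this causes no problem, since only the value $\MR(t_\theta)$ enters, not uniqueness of the crossing. A secondary point is that $b$ really is the essential supremum of $Z$. This follows from the definition of $b$ via the support of $\nu$, since $x^2\nu$ and $\nu$ have the same support on $(0,\infty)$.
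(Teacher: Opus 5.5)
Your proposal is correct and follows the paper's proof. You use $R\le b$ to get $\MR(t)\le e^{bt}$, the positive-probability event that $Z$ is near $b$ and $B$ is near $1$ to get the matching exponential lower bound, and $t_\theta\sim(\log\theta)/b$ to obtain \eqref{eq:ratio} and, via Theorem~\ref{thm:asym}, \eqref{eq:blog}. The paper proves $\tau=\infty$ by bounding the integrand of $K_X$ directly, whereas you derive it from $\MR(t)\le e^{bt}$ and Lemma~\ref{lem:fact}, and you also make explicit the step $\MR(t_\theta)=\theta$ that the paper leaves implicit.
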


\begin{proof}
First, $\tau=\infty$: for $0<x\le b$ one has $e^{tx}-1-tx\le\tfrac12t^2x^2e^{tb}$ and for
$x<0$, $e^{tx}-1-tx\le\tfrac12t^2x^2$, so $K_X(t)<\infty$ for every $t>0$ by the standing
assumption $\int x^2\nu(dx)<\infty$. Since $B\in(0,1)$ and $Z\le b$ we have $R\le b$
almost surely, so $\MR(t)\le e^{bt}$. For
$\varepsilon>0$, $Z$ charges $(b-\varepsilon/2,b]$ and $B$ has positive density near $1$,
so $\PP(R>b-\varepsilon)>0$ and
$\MR(t)\ge e^{(b-\varepsilon)t}\PP(R>b-\varepsilon)$. Hence $\log\MR(t)/t\to b$ and
$t_\theta=(\log\theta)/b\,(1+o(1))$, which gives both \eqref{eq:ratio} and
\eqref{eq:blog}.
\end{proof}

\begin{remark}\label{rem:slow}
The convergence in \eqref{eq:blog} is slow, because, when the law of $R$ has a density vanishing to a finite order at $b$,
$\log\MR(t)=bt-k\log t+O(1)$ with $k$ that order plus one. In
numerical work the implicit relation $bt-k\log t=\log\theta+O(1)$ should be solved for
$t_\theta$ rather than using \eqref{eq:blog}, and the constant term should be retained:
dropping it costs about half of the gain. See Example~\ref{ex:twopoint}, where at
$v/V=10^8$ the crude form overstates the pole by more than a factor of one and a half,
the implicit form is accurate to about twenty percent, and the implicit form without the
constant term to about forty percent. The same phenomenon appears
in the rare-jump asymptotics of \cite{ChenWang2026a}.
\end{remark}

\section{Directional and multivariate frontiers}\label{sec:multi}

Nothing above used a sign restriction on $\nu$, so the results apply verbatim to the two
directional frontiers of a general centered infinitely divisible law. Writing
$\cstar^+(v)=\cstar(v)$ and $\cstar^-(v)=\cstar(v;-X)$, Theorem~\ref{thm:var} reads
\[
  \cstar^{\pm}(v)=\max\Bigl\{0,\ \sup_{t>0}\frac1t\Bigl(1-\frac{\theta}{\E e^{\pm tR}}\Bigr)\Bigr\},
\]
and Propositions~\ref{prop:floor} and \ref{prop:zero} give the corresponding floors
$1/\tau_\pm$, where $\tau_\pm=\sup\{t>0:\E e^{\pm tX}<\infty\}$, together with the zero
pattern of \cite{ChenWang2026a}: $\cstar^{+}$ vanishes identically exactly when
$\nu((0,\infty))=0$, and $\cstar^{-}$ exactly when $\nu((-\infty,0))=0$. A law can have a constant frontier in one direction and a strictly decreasing one in the
other: for the one-sided tempered stable law of Example~\ref{ex:ts} with $Y<-1$,
Proposition~\ref{prop:zero} applied to $-X$ gives $\cstar^-\equiv0$, while
Proposition~\ref{prop:ts} makes $\cstar^+$ drop strictly to the right of $V$. For two-sided laws the two
directions are not independent, since by \eqref{eq:R} the whole Kolmogorov measure enters
$\MR$ in each of them. The
bilateral Gamma family of Example~\ref{ex:gamma} is constant in both.

For a centered infinitely divisible random vector $X$ in $\R^d$ with covariance $\Sigma$
and $V_X(t)=t^\top\Sigma t$, fix an inflation factor $\theta\ge1$ and define
$C_X(t;\theta)$ as the least $c\ge0$ with
\[
  K_X(st)\le\frac{\theta\,V_X(t)\,s^2}{2(1-cs)},\qquad 0\le s<1/c .
\]
For each $t$ with $V_X(t)>0$ the projection $\langle t,X\rangle$ is itself a centered
infinitely divisible scalar variable with variance $V_X(t)$, so Lemma~\ref{lem:fact}
applied to it supplies a variable $R_t$ with $K_X(st)=V_X(t)s^2\,\E e^{sR_t}/2$; this is
the directional factorization of \cite{ChenWang2026b}. Hence
Theorem~\ref{thm:var} applies in each direction and
\[
  C_X(t;\theta)=\max\Bigl\{0,\ \sup_{s>0}\frac1s\Bigl(1-\frac{\theta}{\E e^{sR_t}}\Bigr)\Bigr\}.
\]
When $V_X(t)=0$ the projection $\langle t,X\rangle$ vanishes almost surely, so
$K_X(st)\equiv0$ and the least admissible $c$ is $0$; we read $C_X(t;\theta)=0$ there. For
fixed $\theta$ the map $t\mapsto C_X(t;\theta)$ is positively homogeneous, and it is the
pointwise least such denominator compatible with the inflated quadratic form
$\theta\,t^\top\Sigma t$, in the following sense; the argument is that of
\cite[Theorem 2.8]{ChenWang2026b}, and we give it since it is short. Call $q:\R^d\to[0,\infty]$ admissible if it is positively homogeneous and
$K_X(t)\le\theta V_X(t)/\{2(1-q(t))\}$ whenever $q(t)<1$. Then $C_X(\cdot;\theta)$ is
admissible, by taking $s=1$ in its defining envelope; and if $q$ is admissible and $t$ is
fixed, then for $0\le s<1/q(t)$ homogeneity gives $q(st)=sq(t)<1$, so applying the
displayed inequality at $st$ yields $K_X(st)\le\theta V_X(t)s^2/\{2(1-sq(t))\}$. Hence
$q(t)$ is a feasible directional scale and $q(t)\ge C_X(t;\theta)$.
Thus the radial pole is not a single function but a one-parameter family of them, and
Theorem~\ref{thm:convex} describes how each direction moves as
$\theta$ grows.

\section{Examples}\label{sec:examples}

Throughout this section frontiers are computed from \eqref{eq:varformula} on a fine
logarithmic grid in $t$, refined by golden-section search near the maximiser; all displayed
digits were stable under refinement. One practical warning: near the origin the quotient
$\MR=2K_X/(Vt^2)$ suffers severe cancellation in floating point, and the power series of
$\MR$ must be used there instead.

\begin{example}[Gaussian]
If $\nu=0$ then $R=0$ almost surely, $\MR\equiv1$, and $\Psi_\theta\le0$; hence
$\cstar\equiv0$. By Proposition~\ref{prop:zero} the frontier vanishes identically for
exactly the laws with $\nu((0,\infty))=0$, of which this is one.
\end{example}

\begin{example}[Gamma and bilateral Gamma]\label{ex:gamma}
Let $X$ be a centered $\mathrm{Gamma}(\alpha,\text{scale }\beta)$ variable, so
$K_X(t)=\alpha\{-\log(1-\beta t)-\beta t\}$ and $\tau=1/\beta$. Its L\'evy measure is
$\nu(dx)=\alpha x^{-1}e^{-x/\beta}dx$, the case $Y=0$, $C=\alpha$, $M=1/\beta$ of
Example~\ref{ex:ts}, so Proposition~\ref{prop:ts} below gives $\cstar(V)=1/M=\beta$ and
boundary control, recovering the value obtained in \cite{ChenWang2026a}. By
Proposition~\ref{prop:floor} the frontier is then constant:
\[
  \cstar(v)=\beta\qquad\text{for every }v\ge V .
\]
Numerically the computed frontier equals $\beta$ to six digits for $v/V$ ranging over
$[1,10^3]$. The same holds for each direction of a centered bilateral Gamma law
\cite{KuchlerTappe2008}, whose directional poles are the two scale parameters. Indeed, for
the right direction write $\MR=(V_+/V)\MR^{(+)}+(V_-/V)\MR^{(-)}$ for the decomposition of
$H_X$ into its two sides; the negative side has $Z\le0$, so $\MR^{(-)}(t)\le1$ for $t>0$,
while $\MR^{(+)}(t)\le(1-\beta_+t)^{-1}$ by the one-sided case. Since
$(1-\beta_+t)^{-1}\ge1$, the convex combination obeys the same bound, so $c=\beta_+$ is
feasible; Proposition~\ref{prop:floor} supplies the matching lower bound. Relaxing the quadratic proxy is useless
for this family.
\end{example}

\begin{example}[Compound Poisson with Gamma jumps]\label{ex:g4}
Let $X=\sum_{i\le N}J_i-\lambda\E J$ with $N\sim\mathrm{Poisson}(\lambda)$ and
$J\sim\mathrm{Gamma}(\alpha,\text{scale }\beta)$. Then $\tau=1/\beta$. This law is the case
$Y=-\alpha$, $M=1/\beta$ of Example~\ref{ex:ts}, so Proposition~\ref{prop:ts} gives
$\cstar(V)=\beta\max\{1,(\alpha+2)/3\}$, the value of \cite{Chen2026a}, and for $\alpha>1$ it
supplies pure local control \eqref{eq:purelocal} together with $m_1>1/\tau$; note that
$\cstar(V)=m_1$ alone is weaker than \eqref{eq:purelocal}, which additionally requires the
supremum not to be attained at any $t>0$. Take $\alpha=4$, $\beta=1$, so $\cstar(V)=2$, $m_1=2$, $m_2=7$ and
$\kappa=m_1^2-m_2/2=1/2$. The frontier is given in Table~\ref{tab:g4}.

\begin{table}[t]
\centering
\caption{Frontier $\cstar(v)$ and its maximiser for the centered compound Poisson law with
$\mathrm{Gamma}(4,1)$ jumps.}
\label{tab:g4}
\begin{tabular}{lcl}
\toprule
$v/V$ & $\cstar(v)$ & maximiser \\
\midrule
$1$      & $2.0000$ & $t\downarrow0$ (local) \\
$1.01$   & $1.8701$ & $0.127$ \\
$1.05$   & $1.7380$ & $0.251$ \\
$1.5$    & $1.4294$ & $0.538$ \\
$3$      & $1.2536$ & $0.709$ \\
$20$     & $1.1001$ & $0.876$ \\
$10^{4}$ & $1.0104$ & $0.986$ \\
$10^{6}$ & $1.0022$ & $0.997$ \\
\bottomrule
\end{tabular}
\end{table}

\noindent
The maximiser leaves the origin immediately, as Proposition~\ref{prop:knife} requires, and
the values decrease towards the floor $1/\tau=1$ of Proposition~\ref{prop:floor}, with the
decreasing convergence of Proposition~\ref{prop:vstar}; numerically the floor is not
reached at any $v/V\le10^{6}$. The square-root law of
Theorem~\ref{thm:edge} predicts $\cstar\approx2-2\sqrt{\varepsilon/2}$; the ratio of the
observed drop to the predicted one is $0.919$ at $\varepsilon=10^{-2}$, $0.974$ at
$10^{-3}$, $0.992$ at $10^{-4}$ and $0.997$ at $10^{-5}$, and the observed maximiser
agrees with $\sqrt{2\varepsilon}$ to within $0.4\%$ at $\varepsilon=10^{-5}$
($4.456\times10^{-3}$ against $4.472\times10^{-3}$).
\end{example}

\begin{example}[One-sided tempered stable and CGMY]\label{ex:ts}
Let $\nu(dx)=Cx^{-1-Y}e^{-Mx}dx$ on $(0,\infty)$ with $Y<2$, $C,M>0$
\cite{CGMY2002,Rosinski2007}, so that $\kappa_n=C\Gamma(n-Y)M^{Y-n}$ for $n\ge2$ and
$\tau=M$. Here and below, $Y$ denotes the tempered stable index in the CGMY convention,
which is unrelated to the canonical variable $Z$ of \eqref{eq:R}. Then
\begin{equation}\label{eq:tsmoments}
  \E R^n=\frac{2\Gamma(n+2-Y)}{(n+1)(n+2)\Gamma(2-Y)}\,M^{-n},
  \qquad n\ge1 .
\end{equation}
Proposition~\ref{prop:ts} below determines the whole frontier for this family.
\end{example}

\begin{proposition}\label{prop:ts}
For the law above,
\begin{equation}\label{eq:tspole}
  \cstar(V)=\frac1M\max\Bigl\{1,\ \frac{2-Y}{3}\Bigr\},
\end{equation}
with local control for $Y\le-1$ and boundary control for $Y\ge-1$ in the sense of
Definition~\ref{def:control}. Moreover the
frontier is constant for $Y\ge-1$, while for $Y<-1$ it decreases from \eqref{eq:tspole} to
the floor $1/M$.
\end{proposition}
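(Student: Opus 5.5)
Since $\cstar$ scales like $1/M$ (replacing $X$ by $MX$ multiplies $R$ by $M$ and every admissible pole by $M$), I would first reduce to $M=1$. Here $H_X(dx)/V$ has density proportional to $x^{1-Y}e^{-x}$ on $(0,\infty)$, so $Z\sim\mathrm{Gamma}(2-Y,1)$. This is consistent with \eqref{eq:tsmoments}, and I would take that formula as the starting point. Set $a_n=\E R^n/n!$, so that $\MR(t)=\sum_{n\ge0}a_nt^n$ for $0\le t<1$ (all terms nonnegative, radius of convergence $\tau=1$). The comparison of Lemma~\ref{lem:reduce} at $\theta=1$ is then against $(1-ct)^{-1}=\sum c^nt^n$, so the natural strategy is a coefficientwise comparison $a_n\le c^n$.

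\textbf{Step 1: the ratio.} From \eqref{eq:tsmoments},
\[
  \frac{a_{n+1}}{a_n}=\frac{n+2-Y}{n+3},\qquad a_0=1,\quad a_1=m_1=\frac{2-Y}{3}.
\]
If $Y\ge-1$ every ratio is at most $1$, so $a_n\le1$. If $Y<-1$ the ratio is strictly decreasing in $n$, because $2-Y>3$, so $a_n\le m_1^n$, with strict inequality for $n\ge2$.

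\textbf{Step 2: the value \eqref{eq:tspole}.} Put $c=\max\{1,m_1\}$. Step 1 gives $a_n\le c^n$, hence $\MR(t)\le(1-ct)^{-1}$ on $(0,1/c)$, and $(V,c)$ is feasible. For the matching lower bound, Proposition~\ref{prop:floor} gives $\cstar(V)\ge1/\tau=1$, and Proposition~\ref{prop:knife} gives $\cstar(V)\ge\lim_{t\downarrow0}\Psi_1(t)=m_1$. This proves \eqref{eq:tspole}.

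\textbf{Step 3: the mechanism.} For $Y\ge-1$ we have $\cstar(V)=1=1/\tau$, which is boundary control. For $Y\le-1$ we have $\cstar(V)=m_1=\lim_{t\downarrow0}\Psi_1(t)$, which is local control. At $Y=-1$ both hold, since $m_1=1$.

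\textbf{Step 4: the frontier.} For $Y\ge-1$, boundary control together with Proposition~\ref{prop:floor} makes the frontier constant, equal to $1/M$. For $Y<-1$ we have $\cstar(V)=m_1>1=\max\{0,1/\tau\}$, so Theorem~\ref{thm:drop} gives a strict drop for every $v>V$. Theorem~\ref{thm:convex} shows the frontier is nonincreasing, and Proposition~\ref{prop:vstar} gives $\cstar(v)\downarrow1/\tau=1/M$. Undoing the scaling then gives the statement for general $M$.

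I would also record pure local control \eqref{eq:purelocal} for $Y<-1$, since it is used later for the compound Poisson case. The strict inequalities in Step 1 give $\MR(t)<(1-m_1t)^{-1}$ for $0<t<1/m_1$. Since $1/m_1<1=\tau$, the left side is finite there, so $\Psi_1(t)<m_1$ on $(0,1/m_1)$. For $t\ge1/m_1$ we have $\Psi_1(t)\le1/t$, and this bound is strict for $t\le\tau$ by the definition of $\Psi_1$; on $(\tau,\infty)$ we have $1/t<1<m_1$.

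\textbf{Main obstacle.} The only genuine work is the coefficient monotonicity in Step 1. Everything else is assembly of earlier results. I would double-check the ratio computation from \eqref{eq:tsmoments} directly, including that $2-Y>0$ keeps all coefficients positive.
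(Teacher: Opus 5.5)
Your proposal is correct and is essentially the paper's proof. It uses the same coefficientwise comparison of $\E R^n/n!$ with an exponential law through the ratio $(n+2-Y)/(n+3)$, the same lower bounds from Propositions~\ref{prop:floor} and \ref{prop:knife}, and the same assembly via Theorem~\ref{thm:drop} and Proposition~\ref{prop:vstar}. Your ratio-at-most-one argument for $Y\ge-1$ and your direct appeal to Theorem~\ref{thm:drop} are only cosmetic variants of the paper's monotonicity in $Y$ and its detour through Lemma~\ref{lem:away}.

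There is one harmless slip in your pure-local-control remark. For $Y<-1$ one has $\MR(\tau)=\infty$, so $\Psi_1(\tau)=1/\tau$ is not strictly below $1/\tau$. Nothing is lost, because $1/\tau=1<m_1$, and strictness is needed only at $t=1/m_1<\tau$, where $\MR(1/m_1)<\infty$.
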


\begin{proof}
Put $m=(2-Y)/(3M)$, which is $\E R$ by \eqref{eq:tsmoments}. Comparing successive moments
in \eqref{eq:tsmoments} with those of an exponential law of mean $m$, the inequality
$\E R^n\le n!\,m^n$ propagates from $n$ to $n+1$ precisely when
\[
  \frac{n+2-Y}{n+3}\le\frac{2-Y}{3}
  \iff 3(n+2-Y)\le(2-Y)(n+3)
  \iff 3n\le 2n-Yn
  \iff Y\le-1 ,
\]
for every $n\ge1$; the induction starts from the base case $\E R=m$, which is an equality.
Here $R\ge0$ because $\nu$ is carried by $(0,\infty)$, so all
moments are nonnegative and
\begin{equation}\label{eq:momdom}
  \E R^n\le n!\,c^n\ \ \text{for all }n\ge1
  \quad\Longrightarrow\quad
  \MR(t)=\sum_{n\ge0}\frac{\E R^n}{n!}t^n\le\sum_{n\ge0}(ct)^n=\frac{1}{1-ct},
  \qquad 0\le t<1/c,
\end{equation}
by monotone convergence. For $Y\le-1$ this gives feasibility of $m$, and the local bound
$\cstar(V)\ge m$ of Proposition~\ref{prop:knife} gives equality.

For $Y\ge-1$ note first that at $Y=-1$ formula \eqref{eq:tsmoments} reduces to
\[
  \E R^n=\frac{2\,\Gamma(n+3)}{(n+1)(n+2)\Gamma(3)}M^{-n}
        =\frac{(n+2)!}{(n+1)(n+2)}M^{-n}=n!\,M^{-n},
\]
so $R$ has exactly the moments of an exponential variable of mean $1/M$. For general
$Y<2$,
\[
  \frac{\Gamma(n+2-Y)}{\Gamma(2-Y)}=\prod_{j=0}^{n-1}(2-Y+j),
\]
a product of positive factors each nonincreasing in $Y$; hence $\E R^n$ is nonincreasing
in $Y$ and $\E R^n\le n!\,M^{-n}$ for every $Y\ge-1$. By \eqref{eq:momdom} the value $1/M$
is feasible, and Proposition~\ref{prop:floor} gives $\cstar(V)\ge1/\tau=1/M$; the two
bounds agree.

For the frontier: when $Y\ge-1$ the pole is boundary controlled, so
Proposition~\ref{prop:floor} makes the frontier constant. When $Y<-1$ the ratio condition
above is \emph{strict} at $n=1$, so $\E R^2<2m^2$ and hence, by the same termwise
comparison, $\MR(t)<(1-mt)^{-1}$ for $0<t<1/m$; note $1/m<M=\tau$ because $m>1/M$. This
gives $\Psi_1(t)<m$ for $0<t<1/m$, while $\Psi_1(t)\le1/t<m$ for $t>1/m$; at the remaining
point $t=1/m$ one has $\MR(1/m)<\infty$ because $1/m<\tau$, so
$\Psi_1(1/m)=m\{1-1/\MR(1/m)\}<m$ as well. Thus pure
local control \eqref{eq:purelocal} holds; with $m>1/M=1/\tau$ this is the hypothesis of
Lemma~\ref{lem:away}, so $\cstar(V)=m>\max\{0,1/\tau\}$ and Theorem~\ref{thm:drop}
gives a strict decrease to the right of $V$; Propositions~\ref{prop:floor} and
\ref{prop:vstar} give the floor $1/M$ and the decreasing convergence to it. The
intermediate case $Y=-1$ is excluded from the strict statement: there $R$ is exactly
exponential with mean $1/M$, $\Psi_1\equiv1/M$ on $(0,\tau)$, and the frontier is again
constant.
\end{proof}

\noindent
Formula \eqref{eq:tspole} contains two known cases. Taking $Y=-\alpha$ recovers the
compound Poisson Gamma-jump pole $\beta\max\{1,(\alpha+2)/3\}$ of \cite{Chen2026a} with
$\beta=1/M$, and taking $Y=0$ gives the Gamma scale $\beta$ used in
Example~\ref{ex:gamma}. Formula
\eqref{eq:tspole} does \emph{not} extend to bilateral laws: there $R$ is built from the
whole Kolmogorov measure, so the jumps of the opposite sign also enter $\MR$ and lower it.
For the bilateral law with $\nu(dx)=Cx^{-1-Y_+}e^{-Mx}dx$ on $(0,\infty)$ and
$C|x|^{-1-Y_-}e^{-G|x|}dx$ on $(-\infty,0)$, taking $C=G=M=1$ and $Y_+=Y_-=-3$ gives right
pole $1.1914$, not $(2-Y_+)/3=5/3$. Numerically, with
$M=1$, the frontier at $v/V\in\{1,1.5,3,10,10^3\}$ is $1.667,1.251,1.132,1.059,1.005$ for
$Y=-3$ and $1.333,1.089,1.034,1.009,1.000$ for $Y=-2$, and is constant equal to $1$ for
$Y\in\{-1,0,1\}$, in agreement with Proposition~\ref{prop:ts}.

\begin{example}[Skellam and the disappearance of a phase transition]\label{ex:skellam}
Let $\nu=\lambda\{p\delta_1+(1-p)\delta_{-1}\}$ and $V=\lambda$, the centered Skellam law \cite{Skellam1946}.
Here $\tau=\infty$ and, with $d=2p-1$,
\[
  \MR(t)=\frac{2}{t^2}\bigl\{\cosh t-1+d(\sinh t-t)\bigr\}.
\]
At $v=V$ the right pole equals $d/3$ exactly when $p\ge p_+=(2+\sqrt3)/4$, and is attained
at a positive finite argument otherwise; this is Lemma~\ref{lem:skellam} below, and it
recovers the transition of \cite{ChenWang2026a}. Proposition~\ref{prop:knife} shows
that this transition cannot survive any relaxation of the proxy: for every $\theta>1$ the
objective diverges at the origin, so control is interior for every $p>0$. The computed
frontier confirms this. For $p=1$, where the law is a centered Poisson variable, the pole
falls from $1/3$ at $\theta=1$ to $0.3231$ at $\theta=1.001$ with maximiser $0.19$, and to
$0.2709$ at $\theta=1.05$; at $p=p_+$ the corresponding values are $0.2887$, $0.2850$ and
$0.2516$. The two columns obey different edge laws. At $p=1$ one has $\kappa=1/36>0$ and
Theorem~\ref{thm:edge} applies. At $p=p_+$ one has $\kappa=0$, and \eqref{eq:gamma} with
$\kappa_3=\kappa_5=\lambda d$, $\kappa_4=\lambda$ and $V=\lambda$ gives
\[
  \gamma=\frac{d}{18}-\frac{d^3}{27}-\frac{d}{60}=0.0096225>0
  \qquad\text{at } d=\tfrac{\sqrt3}{2},
\]
so Theorem~\ref{thm:cube} applies instead and predicts a drop $3(\gamma\varepsilon^2/4)^{1/3}$
with maximiser $(\varepsilon/2\gamma)^{1/3}$. Numerically the ratio of the observed drop to
the predicted one is $0.910$ at $\varepsilon=10^{-3}$, $0.958$ at $10^{-4}$, $0.991$ at
$10^{-6}$ and $0.998$ at $10^{-8}$, and the corresponding ratios for the maximiser are
$1.028$, $1.013$, $1.003$ and $1.001$. The transition at $p_+$ is where $\kappa$ changes sign: here $m_1=d/3$ and $m_2=1/6$, so
$\kappa=d^2/9-1/12$ vanishes at $d=\pm\sqrt3/2$, that is at $p_\pm=(2\pm\sqrt3)/4$. Only
$p_+$ is a transition for the right pole: near $p_-$ one has $m_1=d/3<0$, so
$\cstar(V)>\max\{0,m_1\}$ and the maximiser is interior on both sides of it, and $p_-$ is
the corresponding threshold for the opposite direction. Both are features of the single
point $v=V$, not of the family.
\end{example}

\begin{lemma}[The variance-exact Skellam threshold]\label{lem:skellam}
Let $0<p\le1$ and $d=2p-1$ in Example~\ref{ex:skellam}. Then $\cstar(V)=d/3$ if
$d\ge\sqrt3/2$, and $\cstar(V)>\max\{0,d/3\}$ if $d<\sqrt3/2$; in the latter case the
supremum in \eqref{eq:varformula} is attained at a positive finite argument. (At $p=0$
there are no positive jumps and $\cstar\equiv0$ by Proposition~\ref{prop:zero}.)
\end{lemma}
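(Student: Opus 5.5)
The plan is to use the explicit power series of $\MR$ and compare coefficients against the exponential transform, in the same spirit as the moment comparison \eqref{eq:momdom} in the proof of Proposition~\ref{prop:ts}. The difference is that $R$ now takes both signs, so coefficient domination of $\MR$ itself is not available. Instead I would work with the product $(1-ct)\MR(t)$.

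\emph{Power series.} Expanding $\cosh t-1$ and $\sinh t-t$ gives $\MR(t)=\sum_{n\ge0}a_nt^n$, where
\[
a_n=\frac{2}{(n+2)!}\quad(n\text{ even}),\qquad a_n=\frac{2d}{(n+2)!}\quad(n\text{ odd}).
\]
This is an entire series, consistent with $\tau=\infty$. By Lemma~\ref{lem:reduce} with $\theta=1$, the pole $c$ is feasible if and only if $(1-ct)\MR(t)\le1$ on $0<t<1/c$. The left side equals $1+\sum_{n\ge1}(a_n-ca_{n-1})t^n$, so it suffices that every coefficient $a_n-ca_{n-1}$ is nonpositive.

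\emph{Case $d\ge\sqrt3/2$.} Take $c=d/3>0$ and check the coefficients $a_n-ca_{n-1}$ one at a time.
\begin{itemize}
\item[] At $n=1$ the coefficient is $d/3-c=0$.
\item[] At $n=2$ it is $1/12-d^2/9$, which is $\le0$ exactly when $d^2\ge3/4$; this is where the threshold $d\ge\sqrt3/2$ enters.
\item[] For odd $n\ge3$ it equals $\frac{2d}{(n+1)!}\bigl(\frac1{n+2}-\frac13\bigr)\le0$, since $d>0$.
\item[] For even $n\ge4$ it equals $\frac{2}{(n+1)!}\bigl(\frac1{n+2}-\frac{d^2}{3}\bigr)$, which is $\le0$ because $d^2\ge3/4>3/(n+2)$.
\end{itemize}
Hence $c=d/3$ is feasible and $\cstar(V)\le d/3$. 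For the matching lower bound, Proposition~\ref{prop:knife} gives $\cstar(V)\ge\lim_{t\downarrow0}\Psi_1(t)=m_1=d/3$, so equality holds.

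\emph{Case $d<\sqrt3/2$.} Since $p>0$, $\nu$ charges $\{1\}$, so Proposition~\ref{prop:zero} gives $\cstar(V)>0$ and $\cstar(V)=\sup_{t>0}\Psi_1$.
\begin{itemize}
\item[] If $d\le0$, then $\max\{0,d/3\}=0$ and the strict inequality is already proved.
\item[] If $0<d<\sqrt3/2$, then $\kappa=m_1^2-m_2/2=d^2/9-1/12<0$. The expansion $\Psi_1(t)=m_1-\kappa t+o(t)$ then gives $\Psi_1(t)>m_1=d/3$ for small $t>0$, hence $\cstar(V)>\max\{0,d/3\}$.
\end{itemize}

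\emph{Attainment.} Because $\tau=\infty$, Lemma~\ref{lem:psicont} shows that $\Psi_1$ is continuous on $(0,\infty)$. Its limit at $0$ is $m_1$ by Proposition~\ref{prop:knife}, and $\Psi_1(t)\le1/t\to0$ as $t\to\infty$. The supremum strictly exceeds both end values $\max\{0,m_1\}$, so it is achieved on a compact subinterval $[\delta,T]$ and hence at a positive finite argument.

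The main obstacle is the $d\ge\sqrt3/2$ direction: one has to see that the correct object is $(1-ct)\MR$, not the moments of $R$ themselves, and then verify the sign of every coefficient. The only binding constraint is at $n=2$, which reproduces $\kappa\le0$. The even coefficients with $n\ge4$ need $d^2\ge3/(n+2)$, which holds with room to spare once $d^2\ge3/4$.
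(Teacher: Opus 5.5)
Your proof is correct. The case $d<\sqrt3/2$ and the attainment step are essentially the paper's argument: Proposition~\ref{prop:zero} handles $d\le0$, the sign of $\kappa=d^2/9-1/12$ handles $0<d<\sqrt3/2$, and attainment comes from continuity of $\Psi_1$ together with its limits $m_1$ at $0$ and $0$ at $\infty$.

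The case $d\ge\sqrt3/2$ is organised differently. The paper proves the moment domination $\E R^n\le n!\,m_1^n$, that is $a_n\le m_1^n$, by a crude route:
\begin{itemize}
\item it replaces $d$ by $1$ in the odd moments;
\item it uses $m_1\ge\sqrt3/6$;
\item it checks that $b_n=n!\,3^{n/2}(n+1)(n+2)/(2\cdot6^n)$ has $b_2=1$ and increases for $n\ge2$;
\item it then sums termwise as in \eqref{eq:momdom}.
\end{itemize}

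You instead prove the one-step condition $a_n\le c\,a_{n-1}$ for every $n\ge1$. This is coefficientwise stronger, since it implies $a_n\le c^n$ by induction because $c>0$. It is the same propagation device used in the proof of Proposition~\ref{prop:ts}, recast as the sign of the Taylor coefficients of $(1-ct)\MR(t)$. Here it can be checked exactly by parity, with no auxiliary sequence and no loss from bounding $d$ by $1$.

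Both routes find the only binding constraint at $n=2$. In your form that coefficient is literally $a_2-m_1a_1=m_2/2-m_1^2=-\kappa$, which ties the threshold in both halves of the lemma to the same quantity. The paper's version buys reuse of the standing device \eqref{eq:momdom}. Yours buys a slightly sharper structural fact: $1-(1-m_1t)\MR(t)$ has nonnegative Taylor coefficients.

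One remark in your motivation is mistaken, although nothing in your argument depends on it. Coefficient domination of $\MR$ itself is available when $R$ takes both signs. If $a_n\le c^n$ for all $n$, then $\sum_n a_nt^n\le\sum_n(ct)^n$ on $0\le t<1/c$, simply because $t^n\ge0$ and both series converge absolutely. The sign of the moments only decides whether monotone or absolute convergence justifies the termwise summation. The paper uses exactly this, citing $|R|\le1$.
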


\begin{proof}
Here $Z=\pm1$ with probabilities $p,1-p$ and $\E B^n=2/\{(n+1)(n+2)\}$, so
\[
  \E R^n=\frac{2}{(n+1)(n+2)}\times
  \begin{cases}1,&n\text{ even},\\ d,&n\text{ odd},\end{cases}
  \qquad m_1=\E R=\frac d3 .
\]
Let $d\ge\sqrt3/2$. We claim $\E R^n\le n!\,m_1^n$ for every $n\ge1$. At $n=1$ both sides
equal $d/3$. For $n\ge2$ use $|d|\le1$ to get $\E R^n\le2/\{(n+1)(n+2)\}$, so it suffices
that $b_n:=n!\,3^{n/2}(n+1)(n+2)/(2\cdot6^n)\ge1$; and indeed $b_2=1$ while
$b_{n+1}/b_n=\sqrt3\,(n+3)/6$, which is $5\sqrt3/6>1$ at $n=2$ and increases in $n$. Here
$R$ takes both signs, so the passage from moments to \eqref{eq:momdom} rests on absolute
rather than monotone convergence; that is immediate, since $|R|=B|Z|\le1$ almost surely
gives $\E e^{t|R|}\le e^{t}<\infty$ and the exponential series may be summed termwise.
Hence
$\MR(t)\le(1-m_1t)^{-1}$ on $(0,1/m_1)$, the scale $m_1$ is feasible, and
$\cstar(V)\le m_1$; with Proposition~\ref{prop:knife} this is an equality.

Now let $d<\sqrt3/2$. If $d>0$ then $\kappa=m_1^2-m_2/2=d^2/9-1/12<0$, since $m_2=\E R^2=1/6$
and $0<d<\sqrt3/2$; by the expansion $\Psi_1(t)=m_1-\kappa t+o(t)$ of
Section~\ref{sec:asym} we then have $\Psi_1(t)>m_1>0$ for all small $t>0$, whence
$\cstar(V)>m_1=\max\{0,m_1\}$. If $d\le0$ then $\max\{0,m_1\}=0$, while $p>0$ gives
$\nu((0,\infty))>0$ and Proposition~\ref{prop:zero} yields $\cstar(V)>0$. In both cases
the supremum is attained: here $\tau=\infty$, so $\Psi_1$ is continuous on $(0,\infty)$ by
Lemma~\ref{lem:psicont}, tends to $m_1$ at the origin and satisfies $\Psi_1(t)\le1/t\to0$
as $t\to\infty$, and a supremum strictly above $\max\{0,m_1\}$ must therefore be attained
on a compact subinterval.
\end{proof}

\begin{example}[Bounded two-point jumps]\label{ex:twopoint}
Let $X$ be centered compound Poisson with $\PP(J=1)=39/40$ and $\PP(J=5)=1/40$, so
$b=5$ and $\tau=\infty$. Formula \eqref{eq:varformula} gives $\cstar(V)=0.895416$, attained
at the interior point $t_-=0.473723$ used in Section~\ref{sec:noloss}; this is the value
of \cite{Chen2026a}. The intensity $\lambda$ is left free: since $K_X$ and $V$
are both proportional to $\lambda$, the ratio $\MR=2K_X/(Vt^2)$ does not depend on it, and
neither does any quantity displayed in this example. Section~\ref{sec:noloss} fixes
$\lambda=1$, where deviation levels enter and the scale does matter. The frontier is given in Table~\ref{tab:twopoint}.

\begin{table}[t]
\centering
\caption{Frontier $\cstar(v)$ for the bounded two-point jump law of
Example~\ref{ex:twopoint}.}
\label{tab:twopoint}
\begin{tabular}{lccccc}
\toprule
$v/V$   & $2$ & $10$ & $10^2$ & $10^4$ & $10^8$\\
\midrule
$\cstar$ & $0.6588$ & $0.4855$ & $0.3707$ & $0.2603$ & $0.1679$\\
\bottomrule
\end{tabular}
\end{table}
The density of $R$ vanishes linearly at $b$, so $\log\MR(t)=5t-2\log t-\log32+o(1)$ and
Remark~\ref{rem:slow} applies with $k=2$. At $v/V=10^8$ the crude form $b/\log(v/V)$ gives
$0.2714$ and the implicit form $1/t_\theta$ gives $0.1991$, against the true value
$0.1679$; the ratio $\cstar t_\theta$ is $0.84$ and increases to $1$ only logarithmically.
\end{example}

\section{Optimising over the frontier}\label{sec:noloss}

We now show that the frontier, rather than any single point of it, is the object that
governs concentration.

Recall the standard Legendre computation for the envelope
$g_{v,c}(t)=vt^2/\{2(1-ct)\}$ \cite[Section~2.4]{BLM2013}: for $y>0$,
\begin{equation}\label{eq:legendre}
  \sup_{0\le t<1/c}\bigl\{ty-g_{v,c}(t)\bigr\}=x
  \qquad\Longleftrightarrow\qquad
  y=\sqrt{2vx}+cx .
\end{equation}
Consequently a feasible pair $(v,c)$ yields the Bernstein bound
$\PP\{X\ge\sqrt{2vx}+cx\}\le e^{-x}$, and the best deviation obtainable from the whole
frontier at confidence level $e^{-x}$ is
\begin{equation}\label{eq:ystar}
  y_*(x)=\inf_{v\ge V}\bigl\{\sqrt{2vx}+\cstar(v)\,x\bigr\}.
\end{equation}
Write $I(y)=\sup_{t}\{ty-K_X(t)\}$ for the Cram\'er transform of $X$, so that the exact
Chernoff bound is $\PP\{X\ge y\}\le e^{-I(y)}$.

Because $I$ need not be surjective onto $(0,\infty)$ we use the generalized inverse
\begin{equation}\label{eq:geninv}
  I^{-1}(x)=\inf\{y>0:I(y)\ge x\},
\end{equation}
and for a feasible pair $(v,c)$ we write $I_{g}(y)=\sup_{0\le t<1/c}\{ty-g_{v,c}(t)\}$,
the supremum being restricted to the domain of the envelope. Define the \emph{tangency
set}
\begin{equation}\label{eq:Tset}
  \mathcal T=\Bigl\{t>0:\ \exists\,v\ge V \text{ with } c=\cstar(v)<\infty,\ t<1/c,\
  K_X(t)=g_{v,c}(t)\Bigr\}\subseteq(0,\tau).
\end{equation}
The inclusion holds because Proposition~\ref{prop:floor} gives $c\ge1/\tau$, so
$t<1/c\le\tau$. When $c>0$, tangency at $t$ is by \eqref{eq:psi} the same as
$\Psi_{v/V}(t)=c$, that is,
$t$ attains the supremum in \eqref{eq:varformula}; when $c=0$ it says that
$K_X(t)=vt^2/2$, which is a strictly stronger requirement than attaining that supremum
(Remark~\ref{rem:whyhyp}).

\begin{theorem}[No loss over the frontier]\label{thm:noloss}
Assume $\tau>0$ and let $x>0$.
\begin{enumerate}
\item[(i)] $y_*(x)\ge I^{-1}(x)$.
\item[(ii)] Let $t_v\in\mathcal T$, witnessed by $v\ge V$ with $c=\cstar(v)$. Put
$y=K_X'(t_v)$ and $x=I(y)$. Then
\[
  \sqrt{2vx}+\cstar(v)\,x\;=\;y\;=\;I^{-1}(x),
\]
so the infimum in \eqref{eq:ystar} is attained at this $v$ and equals the exact Chernoff
deviation.
\item[(iii)] Consequently $y_*=I^{-1}$ on the set of levels
$\{I(K_X'(t)):t\in\mathcal T\}$.
\item[(iv)] Conversely, if $y_*(x)=I^{-1}(x)$ then $K_X$ has a Legendre maximiser at $y=I^{-1}(x)$ and
it belongs to $\mathcal T$. Equality at level $x$ therefore holds exactly when a Legendre
maximiser at that level exists and lies in $\mathcal T$.
\end{enumerate}
The infimum in \eqref{eq:ystar} is attained: $v\mapsto\sqrt{2vx}+\cstar(v)x$ is finite and
continuous on $[V,\infty)$ by Propositions~\ref{prop:degenerate} and \ref{prop:cont}, and
tends to infinity.
\end{theorem}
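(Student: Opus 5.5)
The plan rests on one comparison principle: if $f\le g$ pointwise on an interval, then the Legendre transform of $g$ over that interval is at most that of $f$. Throughout, feasibility of $(v,\cstar(v))$ is supplied by Theorem~\ref{thm:var}, which holds whenever $\cstar(v)<\infty$; by Proposition~\ref{prop:degenerate} this is true for every $v\ge V$ since $\tau>0$.

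\emph{Part (i).} Fix $v\ge V$, set $c=\cstar(v)$ and $y=\sqrt{2vx}+cx$. Feasibility gives $K_X\le g_{v,c}$ on $[0,1/c)$. Hence, restricting the supremum defining $I$ to $[0,1/c)$,
\[
I(y)\ge\sup_{0\le t<1/c}\{ty-K_X(t)\}\ge I_g(y)=x
\]
by \eqref{eq:legendre}. The definition \eqref{eq:geninv} then gives $I^{-1}(x)\le y$, and taking the infimum over $v$ proves (i).

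The attainment claim at the end of the statement is routine. The map $v\mapsto\sqrt{2vx}+\cstar(v)x$ is continuous by Proposition~\ref{prop:cont} and convexity, and it tends to infinity because $\cstar\ge0$.

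\emph{Part (ii).} Suppose $K_X(t_v)=g(t_v)$ with $t_v<1/c$ and $K_X\le g$ on $[0,1/c)$. The function $g-K_X$ is $\ge0$ and vanishes at the interior point $t_v$, and both functions are differentiable there because $t_v<\tau$. So the first derivatives match: $K_X'(t_v)=g'(t_v)=y$.

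Since $K_X$ is convex, $t_v$ maximises $ty-K_X(t)$ over all $t$. Therefore
\[
I(y)=t_vy-K_X(t_v)=t_vy-g(t_v).
\]
By concavity of $ty-g$ on $[0,1/c)$ and $g'(t_v)=y$, the right-hand side equals $I_g(y)$. So $x=I(y)=I_g(y)$, and \eqref{eq:legendre} gives $y=\sqrt{2vx}+cx$.

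It remains to show $y=I^{-1}(x)$. Since $I$ is nondecreasing on $(0,\infty)$, $I^{-1}(x)\le y$. For the reverse inequality, $I(y')<x$ for every $y'<y$ provided $I$ is strictly increasing below $y$. This holds because $I$ is convex with $I(0)=0$ and $I(y)=x>0$, so $I$ is strictly increasing on the region where it is positive. Combined with (i), the infimum in \eqref{eq:ystar} is attained at this $v$, which proves (ii); (iii) is an immediate restatement.

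\emph{Part (iv)}, the converse, is the main obstacle. Let $v$ attain $y_*(x)$, let $c=\cstar(v)$, and put $y=\sqrt{2vx}+cx=I^{-1}(x)$. Let $t_g\in(0,1/c)$ be the envelope's maximiser, with $g'(t_g)=y$ and $t_gy-g(t_g)=x$. Since $K_X\le g$,
\[
t_gy-K_X(t_g)\ge x.
\]
Two things must now be shown.

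First, $I(y)=x$ and no larger, with the supremum attained. If $I(y)>x$, continuity of $I$ at interior points of its finiteness domain would give $I(y')\ge x$ for some $y'<y$, contradicting $y=I^{-1}(x)$; so $I(y)=x$ exactly. Then $t_gy-K_X(t_g)\ge x=I(y)\ge t_gy-K_X(t_g)$, which forces $K_X(t_g)=g(t_g)$ and shows that $t_g$ is a Legendre maximiser at $y$.

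Second, $t_g\in\mathcal T$. This follows because $t_g<1/c$ with $c=\cstar(v)$.

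The delicate step is ruling out $I(y)>x$ when $y$ sits at a discontinuity of $I$, that is, at the edge of $\operatorname{dom} I$. I would handle it by noting that $I$ is lower semicontinuous and convex, and that $y$ lies in the interior of the region where $I$ is finite, because $y<y+\delta$ remains attainable. If that argument fails, I would instead use optimality of $v$ (a first-order condition in $v$) to exclude the jump.
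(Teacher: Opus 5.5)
The gap is in (iv), at the step you flagged as delicate. Your parts (i)--(iii) and the attainment claim follow the paper's proof. The only difference is in (ii): you identify $I_g(y)$ through $g'(t_v)=y$ and concavity, while the paper sandwiches $x\le I_g(y)\le I(y)=x$; both work.

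\textbf{The step $I(y)\le x$.} You obtain it from continuity of $I$ at $y$, which requires $y\in\operatorname{int}\operatorname{dom}I$. The reason you give for that does not work. Knowing that $y+\delta$ is still of the form $\sqrt{2v'x}+\cstar(v')x$ yields, through (i), only $I(y+\delta)\ge x$. That is a \emph{lower} bound on $I$. Finiteness of $I$ to the right of $y$ would need a lower bound on $K_X$, which no envelope provides.

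The worry is not vacuous. For the spectrally negative law of Remark~\ref{rem:whyhyp} one has $\tau=\infty$, yet $I=\infty$ to the right of $\operatorname{ess\,sup}X$. So $\operatorname{dom}I$ can have a finite right endpoint. The fallback via a first-order condition in $v$ is also unavailable in general, since $\cstar$ need not be differentiable.

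None of this is needed; you name the right tool but attach it to the wrong claim. By \eqref{eq:geninv}, $y=I^{-1}(x)$ gives $I(y')<x$ for every $0<y'<y$. Since $I$ is lower semicontinuous as a supremum of affine functions, $I(y)\le\liminf_{y'\uparrow y}I(y')\le x$. This is exactly the paper's step. It uses lower semicontinuity only from the left, and it shows a posteriori that $y\in\operatorname{dom}I$. Your chain $x=t_gy-g_{v,c}(t_g)\le t_gy-K_X(t_g)\le I(y)\le x$ then closes.

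\textbf{The converse half of (iv).} You prove only one direction of the ``exactly when''. Suppose a Legendre maximiser $t^*\in\mathcal T$ exists at $y=I^{-1}(x)$. Part (ii) then gives equality at the level $I(K_X'(t^*))=I(y)$, so you still must show $I(y)=x$.

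Here interiority is genuinely available. For $s\in(t^*,\tau)$, $t^*<\tau$ gives $y=K_X'(t^*)<K_X'(s)$ and $I(K_X'(s))=sK_X'(s)-K_X(s)<\infty$. So $I$ is finite on a neighbourhood of $y$ and hence continuous there. Since $I$ is nondecreasing on $[0,\infty)$, $I(y')<x$ for $0<y'<y$ and $I(y')\ge x$ for $y'>y$. Continuity then forces $I(y)=x$.
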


\begin{proof}
(i) Let $v\ge V$ with $c=\cstar(v)<\infty$ and put $y=\sqrt{2vx}+cx$, so that $I_g(y)=x$
by \eqref{eq:legendre}. Feasibility gives $K_X\le g_{v,c}$ on $[0,1/c)$, hence
\[
  I(y)=\sup_{t\in\R}\{ty-K_X(t)\}\;\ge\;\sup_{0\le t<1/c}\{ty-K_X(t)\}\;\ge\;I_g(y)=x,
\]
so $y\ge I^{-1}(x)$ by \eqref{eq:geninv}. Taking the infimum over $v$ gives (i).

(ii) By \eqref{eq:Tset} we have $t_v<1/c$ and $K_X(t_v)=g_{v,c}(t_v)$; since
$K_X\le g_{v,c}$ on $[0,1/c)$ with equality at the interior point $t_v$, the two functions
are in fact tangent there. Since $K_X$ is finite on the open interval
$(0,\tau)$ it is real analytic there, so $y=K_X'(t_v)$ is defined; the concave function
$t\mapsto ty-K_X(t)$ has vanishing derivative at the interior point $t_v$, whence
$I(y)=t_vy-K_X(t_v)=x$. Moreover
\[
  I_g(y)\;\ge\;t_vy-g_{v,c}(t_v)\;=\;t_vy-K_X(t_v)\;=\;x,
\]
while the display in (i) gives $I_g(y)\le I(y)=x$; so $I_g(y)=x$ and
\eqref{eq:legendre} yields $y=\sqrt{2vx}+cx$. Finally $I$ is convex with $I(0)=0$, because
$K_X\ge0$ and $K_X(0)=0$; hence for $0<y'<y$ we have $I(y')\le(y'/y)I(y)<x$, so
$I^{-1}(x)=y$. With (i) this shows the infimum is attained at $v$.

(iii) is (ii) applied at each $t\in\mathcal T$.

(iv) Suppose $y_*(x)=I^{-1}(x)$, and let $v$ attain the infimum, with $c=\cstar(v)<\infty$
and $y=\sqrt{2vx}+cx=I^{-1}(x)$. The map $t\mapsto ty-g_{v,c}(t)$ is continuous on
$[0,1/c)$ and tends to $-\infty$ at the right endpoint when $c>0$, and is a concave
quadratic when $c=0$; so it attains $I_g(y)=x$ at some $t_g$, and $t_g>0$ because $y>0$.
Next, $I^{-1}(x)=y$ gives $I(y')<x$ for every $y'<y$, and $I$ is lower semicontinuous
being a supremum of affine functions, so $I(y)\le\liminf_{y'\uparrow y}I(y')\le x$. Hence
\[
  x=t_gy-g_{v,c}(t_g)\;\le\;t_gy-K_X(t_g)\;\le\;I(y)\;\le\;x,
\]
using $K_X\le g_{v,c}$. All the inequalities are equalities, so $K_X(t_g)=g_{v,c}(t_g)$
and $t_g$ is a Legendre maximiser for $K_X$ at $y$. Since $t_g<1/c$, this places
$t_g\in\mathcal T$ by \eqref{eq:Tset}. The maximiser is unique: $X$ is nondegenerate, so
$K_X''>0$ on $(0,\tau)$ and $t\mapsto ty-K_X(t)$ is strictly concave there.

For the converse, suppose the Legendre maximiser $t^*$ at $y=I^{-1}(x)$ exists and lies in
$\mathcal T$. Then $K_X'(t^*)=y$ with $t^*<\tau$, so $0<y<\lim_{s\uparrow\tau}K_X'(s)$, and
on that range $I$ is finite, hence continuous, being convex. Now $I$ is nondecreasing on
$[0,\infty)$, so $I^{-1}(x)=y$ gives $I(y')<x$ for $y'<y$ and $I(y')\ge x$ for $y'>y$;
continuity forces $I(y)=x$. Part (ii) applied to $t^*$ then gives
$y_*(x)=y=I^{-1}(x)$. Hence equality at level $x$ occurs precisely when the Legendre
maximiser at that level lies in $\mathcal T$.
\end{proof}

\begin{remark}\label{rem:whyhyp}
The restriction to $c>0$ above cannot be dropped: when $\cstar(v)=0$ the identity
$c=\Psi_{v/V}(t_v)$ fails, because the maximum with $0$ in \eqref{eq:varformula} binds, and
attaining the supremum of $\Psi_{v/V}$ is then no longer the same as tangency. For
instance, let $X$ be centered compound Poisson with all jumps equal to $-1$ and intensity
$1$, so $V=1$, $\tau=\infty$ and $K_X(t)=e^{-t}-1+t$. At $\theta=1.5$ the function
$\Psi_{1.5}$ does attain its supremum at the interior point $t_v=7.3296$, but the value
there is $-0.7320$, so $\cstar(1.5)=0$; tangency at $t_v$ would require $\MR(t_v)=1.5$,
whereas $\MR(t_v)=0.2357$. The point $t_v$ therefore does not belong to $\mathcal T$ and
(ii) does not apply to it. By Proposition~\ref{prop:zero} this degeneracy occurs exactly
for spectrally negative laws.
\end{remark}

The set $\mathcal T$ is therefore the whole story, and it is not determined by the control
mechanism. The next two examples are both boundary controlled, both drawn from the
tempered stable family of Example~\ref{ex:ts}, and they fall on opposite sides.

\begin{example}[Exact envelope: no loss at any level]\label{ex:noloss-exact}
Take $Y=-1$, that is $\nu(dx)=Ce^{-Mx}dx$: a centered compound Poisson law with
exponential jumps of mean $1/M$. By Proposition~\ref{prop:ts} the pole is boundary
controlled, $\cstar(V)=1/M=1/\tau$, and $R$ is exactly exponential with mean $1/M$, so
$\MR(t)=(1-t/M)^{-1}$ and
\[
  K_X(t)=\frac{Vt^2}{2}\,\MR(t)=\frac{Vt^2}{2(1-t/M)} .
\]
The envelope is an identity, not a bound. Hence $\Psi_1\equiv1/M=\cstar(V)>0$ on $(0,\tau)$, every
$t\in(0,\tau)$ is a tangency point, $\mathcal T=(0,\tau)$, and
Theorem~\ref{thm:noloss}(iii) gives $y_*(x)=I^{-1}(x)$ on $\{I(K_X'(t)):t\in\mathcal T\}$,
which here is all of $(0,\infty)$: $K_X(t)=Vt^2/\{2(1-t/M)\}$ gives $K_X'(t)\uparrow\infty$
as $t\uparrow M$, so $t\mapsto I(K_X'(t))$ maps $(0,M)$ continuously and increasingly onto
$(0,\infty)$. Equality therefore holds at every level. This is the
equality law of \cite[Theorem 4.3]{Chen2026a} seen from the present angle: boundary
control and losslessness are compatible.
\end{example}

\begin{example}[Genuine loss under boundary control]\label{ex:gaploss}
Take $Y=0$, $C=M=1$, that is a centered exponential variable of unit mean, with $V=1$,
$\tau=1$ and $\cstar\equiv1$ by Example~\ref{ex:gamma}. Here $\mathcal T=\emptyset$.
Indeed $\Psi_\theta\le\Psi_1$ and, expanding,
\[
  \MR(t)=\frac{2\bigl(-\log(1-t)-t\bigr)}{t^2}=\sum_{m\ge0}\frac{2}{m+2}\,t^m
  \;<\;\sum_{m\ge0}t^m=\frac{1}{1-t},\qquad 0<t<1,
\]
because the coefficients satisfy $2/(m+2)\le1$ with equality only at $m=0$. Rearranging,
$\Psi_1(t)<1=\cstar(\theta V)$ for every $t\in(0,1)$ and every $\theta\ge1$, so no
interior maximiser realises the pole. By Theorem~\ref{thm:noloss}(iv) the inequality in
(i) is strict at every level. The frontier being constant, the infimum is attained at
$v=V$ and $y_*(x)=\sqrt{2x}+x$, while $I(y)=y-\log(1+y)$. The relative loss
$1-I^{-1}(x)/y_*(x)$ is
\[
  5.7\%,\ 11.1\%,\ 13.1\%,\ 12.9\%,\ 8.3\%,\ 3.6\%
  \qquad\text{at } x=0.1,\,1,\,5,\,10,\,10^2,\,10^3,
\]
with maximum $13.1\%$ near $x=5.5$.
\end{example}

\begin{remark}
Examples~\ref{ex:noloss-exact} and \ref{ex:gaploss} sit at $Y=-1$ and $Y=0$ of the same
one-parameter family and are both boundary controlled, yet one is exactly lossless and the
other is not. The control mechanism, which decides the shape of the frontier
(Section~\ref{sec:mech}), therefore does not decide whether the optimised envelope attains
the Chernoff bound. That is governed by $\mathcal T$, equivalently by whether the envelope
can be made tangent to $K_X$ at the relevant Legendre maximiser.
\end{remark}

\begin{proposition}[The tangency set is bounded below by the variance-exact maximiser]
\label{prop:Tlower}
Assume the supremum $\sup_{t>0}\Psi_1$ is attained, and set
$t_-=\inf\{t>0:\Psi_1(t)=\sup_{s>0}\Psi_1(s)\}$. Then $\mathcal T\subseteq[t_-,\infty)$.
\end{proposition}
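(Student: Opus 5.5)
Fix $t\in\mathcal T$, witnessed by some $v\ge V$ with $c=\cstar(v)<\infty$ and $t<1/c$, and write $\theta=v/V\ge1$. The plan has three steps: show that $t$ is a maximiser of $\Psi_\theta$; show that $t_-$ is itself a maximiser of $\Psi_1$; and then use the penalty decomposition of Lemma~\ref{lem:monpen} to show that no point to the left of $t_-$ can maximise $\Psi_\theta$.

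\emph{Step 1: $t$ maximises $\Psi_\theta$.} Tangency $K_X(t)=g_{v,c}(t)$ reads, via Lemma~\ref{lem:fact}, as $\MR(t)=\theta/(1-ct)$, that is $\Psi_\theta(t)=c$. This holds in both cases $c>0$ and $c=0$; in the case $c=0$ it says $\Psi_\theta(t)=0$. Theorem~\ref{thm:var} gives $\sup_{s>0}\Psi_\theta(s)\le\cstar(v)=c$. Hence $t$ attains the supremum of $\Psi_\theta$. (When $c=0$ tangency is stronger than attainment, as Remark~\ref{rem:whyhyp} notes, but we only need this direction.)

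\emph{Step 2: location of $t_-$.} Put $c_0=\sup_{s>0}\Psi_1(s)$. The attainment set $A=\{s>0:\Psi_1(s)=c_0\}$ is nonempty by hypothesis.
\begin{itemize}
\item If $t_-=0$ the claim $\mathcal T\subseteq[0,\infty)$ is trivial, so assume $t_->0$.
\item No point $s>\tau$ lies in $A$. There $\Psi_1(s)=1/s<1/\tau\le c_0$, the last inequality by Proposition~\ref{prop:floor}.
\item Consequently $A\subseteq(0,\tau]$. On $(0,\tau]$ the function $\Psi_1$ is continuous by Lemma~\ref{lem:psicont}, so $A$ is closed in $(0,\tau]$.
\item Since $t_->0$, it follows that $t_-\in A$. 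We also get $\Psi_1(s)<c_0$ for every $0<s<t_-$.
\item Finally, $\phi(t_-)=\{t_-\MR(t_-)\}^{-1}$ is finite, because $t_->0$ and $\MR(t_-)>0$.
\end{itemize}

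\emph{Step 3: comparison.} The case $\theta=1$ is immediate: $t$ maximises $\Psi_1$, so $t\in A$ and $t\ge t_-$ by the definition of $t_-$. For $\theta>1$, take any $0<s<t_-$. By \eqref{eq:pendecomp}, and since $\phi$ is nonincreasing (Lemma~\ref{lem:monpen}),
\[
  \Psi_\theta(s)=\Psi_1(s)-(\theta-1)\phi(s)\le\Psi_1(s)-(\theta-1)\phi(t_-)
  <c_0-(\theta-1)\phi(t_-)=\Psi_\theta(t_-).
\]
Here $\phi(t_-)<\infty$ makes the right side a genuine finite comparison. So no $s<t_-$ maximises $\Psi_\theta$, and by Step 1 we conclude $t\ge t_-$.

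The only delicate point is Step 2. We need that the infimum $t_-$ is itself attained, so that the strict inequality $\Psi_1(s)<c_0$ holds on $(0,t_-)$ and $\phi(t_-)$ is finite. This relies on the upward jump at $\tau$ described in Lemma~\ref{lem:psicont}, which excludes attainment beyond $\tau$, together with closedness of the attainment set on $(0,\tau]$.
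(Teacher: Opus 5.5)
Your proof is correct and follows the paper's route: a tangency point maximises $\Psi_\theta$, and the decomposition $\Psi_\theta=\Psi_1-(\theta-1)\phi$ with $\phi$ nonincreasing excludes every point to the left of $t_-$. The paper compares a hypothetical $t<t_-$ with an arbitrary maximiser $t_*\ge t_-$ of $\Psi_1$ instead, which gives $\Psi_1(t)\ge\Psi_1(t_*)$ and so a maximiser below $t_-$, a contradiction; that route makes your Step 2 (attainment of $t_-$) unnecessary, while yours additionally yields the strict inequality $\Psi_\theta(s)<\Psi_\theta(t_-)$ for every $s<t_-$.
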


\begin{proof}
Let $t\in\mathcal T$, witnessed by $v=\theta V$ with $c=\cstar(v)$. If $c>0$ then
$\Psi_\theta(t)=c=\sup_{s>0}\Psi_\theta$ by the discussion after \eqref{eq:Tset}; if $c=0$
then tangency reads $\MR(t)=\theta$, so $\Psi_\theta(t)=0$, while $\cstar(v)=0$ forces
$\sup_{s>0}\Psi_\theta\le0$. In both cases $t$ attains $\sup_{s>0}\Psi_\theta$.

Suppose $t<t_-$, and let $t_*$ be any maximiser of $\Psi_1$, so $t<t_-\le t_*$. By
\eqref{eq:pendecomp} and $\Psi_\theta(t)\ge\Psi_\theta(t_*)$,
\[
  \Psi_1(t)\;\ge\;\Psi_1(t_*)+(\theta-1)\{\phi(t)-\phi(t_*)\}\;\ge\;\Psi_1(t_*),
\]
because $\theta\ge1$ and $\phi(t)\ge\phi(t_*)$ by Lemma~\ref{lem:monpen}. Hence $t$ is a maximiser of
$\Psi_1$ with $t<t_-$, contradicting the definition of $t_-$ as the infimum of the
maximisers.
\end{proof}

\begin{remark}\label{rem:tminus}
Attainment of $\sup_{t>0}\Psi_1$ does not make $t_-$ positive, and the infimum defining
$t_-$ need not itself be attained: in Example~\ref{ex:noloss-exact} every point of
$(0,\tau)$ is a maximiser, so $t_-=0$ and Proposition~\ref{prop:Tlower} is vacuous there.
\end{remark}

\begin{corollary}[Maximisers away from the origin force a gap at small deviations]
\label{cor:smallgap}
Assume $\cstar(V)>0$, that $\sup_{t>0}\Psi_1$ is attained, and that its maximisers are
bounded away from the origin, in the sense that the quantity $t_-$ of
Proposition~\ref{prop:Tlower} satisfies $0<t_-<\tau$ (Remark~\ref{rem:tminus} shows that
attainment alone does not give this). Put $x_0=I(K_X'(t_-))$. Then
$y_*(x)>I^{-1}(x)$ for every $x\in(0,x_0)$: no pair on the frontier attains the Chernoff
deviation at any level below $x_0$.
\end{corollary}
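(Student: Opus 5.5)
The argument combines Theorem~\ref{thm:noloss}(iv) with Proposition~\ref{prop:Tlower}. I will argue by contradiction. Fix $x\in(0,x_0)$ and suppose $y_*(x)=I^{-1}(x)$. By Theorem~\ref{thm:noloss}(iv) there is then a Legendre maximiser $t^*$ of $K_X$ at $y=I^{-1}(x)$, and it lies in $\mathcal T\subseteq(0,\tau)$. Proposition~\ref{prop:Tlower} applies because $\sup_{t>0}\Psi_1$ is assumed to be attained, and it gives $t^*\ge t_-$. So it suffices to show that the level determined by a tangency point $t\ge t_-$ is at least $x_0$. This contradicts $x<x_0$.

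To get that, I will use the part of the proof of Theorem~\ref{thm:noloss}(iv) that gives $I(y)=x$ and $K_X'(t^*)=y$, with $t^*$ the maximiser in the interior of $(0,\tau)$. Hence $x=I(K_X'(t^*))$. I then need monotonicity of the map $t\mapsto I(K_X'(t))$ on $(0,\tau)$. The function $K_X$ is strictly convex and real analytic on $(0,\tau)$, since nondegeneracy gives $K_X''>0$ there, as noted in that proof. So $K_X'$ is strictly increasing and positive on $(0,\tau)$; positivity holds because $K_X'(0)=0$ for a centered law. Also $I(K_X'(t))=tK_X'(t)-K_X(t)$, whose derivative is $tK_X''(t)>0$. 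The map is therefore strictly increasing. Because $t_-<\tau$, the quantity $x_0=I(K_X'(t_-))$ is defined, and $t^*\ge t_-$ gives $x=I(K_X'(t^*))\ge x_0$. This is the desired contradiction.

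The main delicate step is the boundary case $t^*=t_-$. It gives $x=x_0$, which is excluded because $x<x_0$ is strict, so no separate treatment is needed there. I must also make sure that $y_*(x)=I^{-1}(x)$ really produces $I(y)=x$ rather than only $I(y)\le x$. This is exactly what part (iv) of Theorem~\ref{thm:noloss} establishes, since all the inequalities in that chain become equalities. Finally, the hypothesis $\cstar(V)>0$ is used only to make sure that $\Psi_1$ attaining its supremum is the relevant notion, so that Proposition~\ref{prop:Tlower} can be invoked; its proof already covers the witnesses with $c=0$.
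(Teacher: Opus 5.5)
Your argument is correct, but it reaches the contradiction by a different route from the paper.

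The paper first shows that $t_-$ itself lies in $\mathcal T$. Continuity of $\Psi_1$ at $t_-$ gives $\Psi_1(t_-)=\cstar(V)$, and $\cstar(V)>0$ turns this into tangency. The paper then applies Theorem~\ref{thm:noloss}(ii) at $t_-$ to get $I^{-1}(x_0)=K_X'(t_-)$, and uses a left-continuity argument for the convex function $I$ to show $I^{-1}(x)<K_X'(t_-)$ when $x<x_0$. Finally it compares derivatives: $K_X'(t_g)=I^{-1}(x)<K_X'(t_-)$ forces $t_g<t_-$, contradicting Proposition~\ref{prop:Tlower}.

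You compare in the level variable instead. The chain of equalities in the proof of Theorem~\ref{thm:noloss}(iv) gives $I(y)=x$, and the tangency point satisfies $K_X'(t^*)=y$. The map $t\mapsto I(K_X'(t))=tK_X'(t)-K_X(t)$ has derivative $tK_X''(t)>0$, so it is strictly increasing on $(0,\tau)$, and $t^*\ge t_-$ gives $x\ge x_0$ at once.

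Your route is shorter. It needs neither $t_-\in\mathcal T$ nor the semicontinuity argument for $I$. It also never uses $\cstar(V)>0$: Proposition~\ref{prop:Tlower} has no positivity hypothesis, so your closing remark about where that assumption enters is inaccurate; the assumption is simply superfluous for your argument.

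The paper's route buys extra information. Because $t_-\in\mathcal T$, Theorem~\ref{thm:noloss}(ii) gives equality $y_*(x_0)=I^{-1}(x_0)$ at the threshold itself. So the strict-gap interval $(0,x_0)$ cannot be enlarged, which is what the numerical discussion after the corollary checks. That is the one place where $\cstar(V)>0$ is genuinely needed.
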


\begin{proof}
First, $\Psi_1(t_-)=\cstar(V)$: the function $\Psi_1$ is continuous at $t_-\in(0,\tau)$ by
Lemma~\ref{lem:psicont} and $t_-$ is the infimum of a set of maximisers, so a sequence of
maximisers decreasing to $t_-$ transfers the value. Since $t_-\in(0,\tau)$ we have
$\MR(t_-)<\infty$, so $\cstar(V)=\Psi_1(t_-)<1/t_-$ and
$t_-\in\mathcal T$; Theorem~\ref{thm:noloss}(ii) applied at $t_-$ gives
$I^{-1}(x_0)=K_X'(t_-)$. Let $0<x<x_0$, put $y=I^{-1}(x)$ and $y_0=K_X'(t_-)$. Then $y<y_0$: by
Theorem~\ref{thm:noloss}(ii) $I(y_0)=x_0$, and $I$ is finite and convex on $[0,y_0]$, so
$\limsup_{y'\uparrow y_0}I(y')\le I(y_0)$, while lower semicontinuity gives the reverse
inequality for the $\liminf$; hence $\lim_{y'\uparrow y_0}I(y')=x_0>x$ and some $y'<y_0$ already
satisfies $I(y')>x$, whence $I^{-1}(x)\le y'<y_0$. If $y_*(x)=I^{-1}(x)$, the proof of
Theorem~\ref{thm:noloss}(iv) produces a Legendre maximiser $t_g\in\mathcal T$ of $K_X$ at
$y$; then $K_X'(t_g)=y<K_X'(t_-)$ and $K_X'$ is strictly increasing on $(0,\tau)$, so
$t_g<t_-$, contradicting Proposition~\ref{prop:Tlower}. Equality therefore fails, and
Theorem~\ref{thm:noloss}(i) gives the strict inequality.
\end{proof}

Example~\ref{ex:twopoint}, taken with intensity $\lambda=1$ so that $V=1.6$, is interior
controlled with $t_-=0.4737$ and
$x_0=I(K_X'(t_-))=0.5415$. Numerically the relative excess $y_*(x)/I^{-1}(x)-1$ is
$0.149\%$ at $x=0.01$, $0.180\%$ at $x=0.05$ and $0.083\%$ at $x=0.2$, and vanishes to
machine accuracy at $x=x_0$, as Theorem~\ref{thm:noloss}(ii) applied at $t_-$ predicts, and
also at every larger level tested, which the theory does not require.
The gap is small but genuine, and it is invisible if one computes only at moderate levels.

\begin{corollary}[Strict pure local control makes the variance-exact point suboptimal]
\label{cor:neveropt}
Assume the hypotheses of Theorem~\ref{thm:edge} or those of Theorem~\ref{thm:cube}; more
generally, assume only that $\cstar'(V^+)=-\infty$. Then
\[
  y_*(x)<\sqrt{2Vx}+\cstar(V)\,x\qquad\text{for every }x>0 .
\]
Under boundary control, by contrast, the infimum in \eqref{eq:ystar} is attained at $v=V$
for every $x>0$.
\end{corollary}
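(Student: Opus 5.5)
Fix $x>0$ and set $f(v)=\sqrt{2vx}+\cstar(v)\,x$ on $[V,\infty)$. Since $y_*(x)=\inf_{v\ge V}f(v)\le f(v)$ for every $v\ge V$, it suffices to exhibit a single $v>V$ with $f(v)<f(V)$. The plan is a one-sided difference quotient at $V$. The square-root term is smooth at $V$ with right derivative $\sqrt{x/(2V)}$, a finite number. Under the hypotheses of Theorem~\ref{thm:edge} or Theorem~\ref{thm:cube}, $\cstar$ has right derivative $-\infty$ at $V$, as stated in those theorems. This is the only input used, which is why the corollary is phrased under the weaker assumption $\cstar'(V^+)=-\infty$.

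Concretely, for $v=V(1+\varepsilon)$ I will write
\[
  \frac{f(v)-f(V)}{v-V}=\frac{\sqrt{2vx}-\sqrt{2Vx}}{v-V}+x\,\frac{\cstar(v)-\cstar(V)}{v-V}.
\]
The first quotient is at most $\sqrt{x/(2V)}$ by concavity of the square root. The second tends to $-\infty$ as $v\downarrow V$. Both values involved are finite, since $\tau>0$ under either hypothesis and Proposition~\ref{prop:degenerate} applies. Hence the whole quotient is negative for $v$ close enough to $V$, so $f(v)<f(V)$ there and $y_*(x)\le f(v)<f(V)$, which is the claim. I would also record that convexity of $\cstar$ (Theorem~\ref{thm:convex}) makes the difference quotient monotone, so it has a limit in $[-\infty,0]$; this ensures the phrase ``$\cstar'(V^+)=-\infty$'' is meaningful.

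For the contrast under boundary control, $\cstar(V)=1/\tau$ and Proposition~\ref{prop:floor} gives $\cstar(v)=1/\tau$ for all $v\ge V$. Then $f(v)=\sqrt{2vx}+x/\tau$ is strictly increasing in $v$, so the infimum in \eqref{eq:ystar} is attained at $v=V$ for every $x>0$.

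I expect no real obstacle. The only point needing care is that the derivative statement in Theorems~\ref{thm:edge} and \ref{thm:cube} is a genuine right-limit of difference quotients, not merely an asymptotic expansion. This follows directly from the displayed expansions $m_1-\cstar(\theta V)\asymp\varepsilon^{1/2}$ or $\varepsilon^{2/3}$, each divided by $V\varepsilon$.
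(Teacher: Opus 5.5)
Your proposal is correct and follows essentially the same route as the paper. It compares $f(V(1+\varepsilon))$ with $f(V)$, bounds the square-root term's contribution by $O(\varepsilon)$, and uses $\cstar'(V^+)=-\infty$ to make the pole's drop dominate. For the boundary-controlled half it uses the constancy of the frontier from Proposition~\ref{prop:floor}. Your concavity bound on the first quotient and your remark that convexity makes the difference quotient monotone are harmless refinements of the paper's $O(\varepsilon)$ step.
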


\begin{proof}
Write $h(v)=\sqrt{2vx}+\cstar(v)x$ and $\rho(\varepsilon)=\cstar(V)-\cstar(V+\varepsilon V)$,
so that
\[
  h(V+\varepsilon V)-h(V)=\sqrt{2Vx}\bigl(\sqrt{1+\varepsilon}-1\bigr)-x\rho(\varepsilon)
  =O(\varepsilon)-x\rho(\varepsilon).
\]
The hypothesis $\cstar'(V^+)=-\infty$ says exactly that
$\rho(\varepsilon)/\varepsilon\to\infty$, so the right-hand side is negative for all small
$\varepsilon>0$; hence $h$ takes values below $h(V)$ and the infimum is strictly smaller.
Theorem~\ref{thm:edge} gives $\rho(\varepsilon)\sim2\sqrt{\kappa\varepsilon}$ and
Theorem~\ref{thm:cube} gives $\rho(\varepsilon)\sim3(\gamma\varepsilon^2/4)^{1/3}$, both of
order larger than $\varepsilon$. Under boundary control, by contrast, $\cstar$ is constant
by Proposition~\ref{prop:floor}, so $h$ is strictly increasing and its infimum is attained
at $v=V$.
\end{proof}

\begin{remark}
The hypothesis $m_1>1/\tau$ is exactly what separates the two halves of the corollary: it
gives $\cstar(V)=m_1>1/\tau$, whereas boundary control means $\cstar(V)=1/\tau$. It is a
genuine assumption, not a consequence of pure local control, which forces only
$m_1\ge1/\tau$ (Remark~\ref{rem:implied}), and it cannot be omitted:
Example~\ref{ex:knifefloor} is purely locally controlled with $m_1=1/\tau$, and falls under
the second half. What drives the first half is only the infinite one-sided derivative, and
that hypothesis is essentially necessary: since $h'(V^+)=\sqrt{x/(2V)}+\cstar'(V^+)x$, a
finite slope would leave $v=V$ locally optimal at all small enough $x$. Interior control
does produce a finite slope --- near an interior maximiser $t_-$ one has
$\sup_t\Psi_\theta\approx\Psi_1(t_-)-\varepsilon\phi(t_-)$ by \eqref{eq:pendecomp} --- which
is consistent with Corollary~\ref{cor:smallgap} and with the numerics for
Example~\ref{ex:twopoint}, where the optimal proxy stays at $v=V$ at small levels.
\end{remark}

Table~\ref{tab:noloss} illustrates both statements for the law of Example~\ref{ex:g4},
normalised so that $V=1$. The column ``naive'' is the deviation obtained from the
variance-exact point alone; ``best'' is the minimum in \eqref{eq:ystar}; ``Chernoff'' is
$I^{-1}(x)$ computed directly from $K_X$. The optimal proxy is reported to two decimals
only: the objective in \eqref{eq:ystar} is very flat near its minimiser, and at $x=100$ a
change of $0.01$ in $v$ moves the minimum by less than $10^{-4}$.

\begin{table}[t]
\centering
\caption{Deviation reaching confidence $e^{-x}$ for the centered compound Poisson law with
$\mathrm{Gamma}(4,1)$ jumps, normalised to $V=1$. The optimised sub-gamma bound coincides
numerically with the exact Chernoff deviation in every row.}
\label{tab:noloss}
\begin{tabular}{rrrrrr}
\toprule
$x$ & naive $(v=V)$ & optimal $v$ & best & Chernoff & saving \\
\midrule
$0.25$ & $1.2071$   & $1.09$ & $1.1557$  & $1.1557$  & $4.3\%$ \\
$0.5$  & $2.0000$   & $1.15$ & $1.8738$  & $1.8738$  & $6.3\%$ \\
$1$    & $3.4142$   & $1.23$ & $3.1108$  & $3.1108$  & $8.9\%$ \\
$2$    & $6.0000$   & $1.34$ & $5.2861$  & $5.2861$  & $11.9\%$ \\
$5$    & $13.1623$  & $1.56$ & $11.0159$ & $11.0159$ & $16.3\%$ \\
$10$   & $24.4721$  & $1.81$ & $19.6382$ & $19.6382$ & $19.8\%$ \\
$25$   & $57.0711$  & $2.28$ & $43.2717$ & $43.2717$ & $24.2\%$ \\
$100$  & $214.1421$ & $3.54$ & $149.5782$& $149.5782$& $30.2\%$ \\
\bottomrule
\end{tabular}
\end{table}

The pattern is what Theorem~\ref{thm:noloss} predicts; that the eight levels lie in
$I(K_X'(\mathcal T))$ is checked numerically, not derived. As $x$ grows the optimal
proxy moves to the right along the frontier, tracking the Legendre maximiser as it
approaches $\tau$; the saving relative to the variance-exact point grows from four percent
in the near-Gaussian regime to thirty percent at $x=100$. In the opposite direction, as
$x\downarrow0$ the optimal proxy returns to $V$, which is the sense in which the
variance-exact normalisation is correct: it is optimal only in the limit of small
deviations, and by Corollary~\ref{cor:neveropt} it is not optimal at any positive
deviation level for this law.

\begin{remark}
Theorem~\ref{thm:noloss} qualifies a common reading of sub-gamma bounds. The gap between a
Bernstein bound and the exact Chernoff bound is often attributed to the crudeness of the
quadratic-over-linear envelope. For the law of Example~\ref{ex:g4} the
envelope family is not crude at all: it is exactly sharp at every level tested once both
of its parameters are allowed to move along the feasible frontier, and what is crude is
the habit of reporting a single pair. We do not claim this for every such law, since it
depends on $\mathcal T$, which we do not determine in general: Example~\ref{ex:gaploss}
exhibits a law for which no pair on the frontier reaches the Chernoff bound at any level,
and Corollary~\ref{cor:smallgap} shows that a law whose variance-exact maximisers are
bounded away from the origin always fails below an explicit threshold. In those cases the residual gap is a limitation of the
quadratic-over-linear shape rather than of the normalisation. Theorem~\ref{thm:noloss}(iv) says exactly which case one is in.
\end{remark}

\section{Discussion}

Fixing the quadratic proxy at the true variance makes the sub-gamma pole a well-defined
functional of the law, and the preceding papers of this series computed it. The present
paper shows that this normalisation is a boundary point of a one-parameter problem, and
that the boundary point is singular. Three consequences seem worth emphasising.

First, the two endpoint obstructions are not symmetric. A boundary-controlled law has a
completely flat frontier, so for such laws the variance-exact answer is the whole answer.
A purely locally controlled law whose local obstruction strictly dominates the floor has a
frontier that leaves its endpoint with infinite slope --- a square root when $\kappa>0$, a
cube root in the remaining case $\kappa=0$, $\gamma>0$, and an
$\varepsilon^{j/(j+1)}$ law in general --- so for such laws the variance-exact answer is
the least informative point of the curve. The
third-cumulant obstruction, which is the most quoted feature of the variance-exact
problem, exists only at $v=V$.

Second, the phase transitions found in the variance-exact problem are transitions of the
same knife-edge type. The Skellam threshold $p_+=(2+\sqrt3)/4$ separates local from
interior control of the right pole at $v=V$, with $p_-=(2-\sqrt3)/4$ playing the same role
for the left pole by symmetry, and neither has an analogue for $v>V$. What the transition
does leave behind is a change in the edge exponent: the frontier leaves $v=V$ like
$\sqrt{\theta-1}$ for $p>p_+$ and like $(\theta-1)^{2/3}$ at $p=p_+$ itself
(Theorems~\ref{thm:edge} and \ref{thm:cube}). This suggests that such
thresholds should be read as statements about a normalisation rather than about the law.

Third, losslessness of the two-parameter family is a separate question from the shape of
the frontier, and Theorem~\ref{thm:noloss}(iv) reduces it to a single set: the collection
$\mathcal T$ of interior points at which some envelope on the frontier is tangent to
$K_X$. Where the relevant Legendre maximiser lies in $\mathcal T$, optimising over the
frontier reproduces the Chernoff bound exactly and nothing is given up by working with
sub-gamma envelopes, provided one works with all of them; where it does not, the gap is
real. Examples~\ref{ex:noloss-exact} and \ref{ex:gaploss} show that the control mechanism
does not decide between the two globally, since two boundary-controlled members of one
family fall on opposite sides. In practice this means reporting the frontier, or at least $\cstar(V)$ together with the
interval on which the curve is non-constant --- when $\tau<\infty$ that interval is
$[V,v^*)$ with $v^*$ as in Proposition~\ref{prop:vstar} --- rather than a single pole.

Several questions remain. The hypotheses that drive Section~\ref{sec:mech} and
Section~\ref{sec:asym} --- $\cstar(V)>\max\{0,1/\tau\}$ in Theorem~\ref{thm:drop}, and pure
local control \eqref{eq:purelocal} in Theorem~\ref{thm:edge} --- are conditions on the
function $\Psi_1$ rather than on the L\'evy triplet, and we verify them family by family
(Proposition~\ref{prop:ts}); a triplet-level criterion for either would be worth having.
The critical proxy $v^*$ of Proposition~\ref{prop:vstar} is
finite when $\MR(\tau^-)<\infty$, but we have no description of $v^*$ in terms of the
L\'evy triplet either. We do not determine $\mathcal T$ in general. It is all of $(0,\tau)$ when
the envelope is an identity (Example~\ref{ex:noloss-exact}), empty for the centered
exponential (Example~\ref{ex:gaploss}), empty again for Example~\ref{ex:knifefloor}, and
appears to be $(0,\tau)$ in Example~\ref{ex:g4}; it is bounded away from the origin whenever
the maximisers at $v=V$ are
(Proposition~\ref{prop:Tlower}), so a gap is forced at every small enough level
(Corollary~\ref{cor:smallgap}); a description in terms of the L\'evy triplet, together
with the levels $I(K_X'(\mathcal T))$ it covers, is the natural next step and would settle
exactly when the sub-gamma family is lossless. Finally, the multivariate frontier of
Section~\ref{sec:multi} is a family of positively homogeneous functions indexed by
$\theta$, and the geometric questions studied in \cite{ChenWang2026b} for $\theta=1$, in
particular subadditivity, may behave better for $\theta>1$, where local control is
unavailable.

\section*{Declaration of competing interest}

The authors declare that they have no known competing financial interests or personal
relationships that could have appeared to influence the work reported in this paper.

\section*{CRediT authorship contribution statement}

\textbf{Yichuan Chen:} Conceptualization, Methodology, Formal analysis, Investigation,
Software, Writing -- original draft. \textbf{Xin Wang:} Supervision, Writing -- review \&
editing.

\section*{Funding}

This work received no specific grant from any funding agency in the public, commercial,
or not-for-profit sectors.

\section*{Data availability}

This article has no associated data: no datasets were generated or analysed. Every
numerical value reported in Section~\ref{sec:examples} and Table~\ref{tab:noloss} is
computed from the closed-form expressions stated in the text.

\section*{Declaration of generative AI and AI-assisted technologies in the manuscript
preparation process}

During the preparation of this work the authors used Claude (Anthropic) for: language
editing; adversarial checking of the proofs and of the numerical computations, a process
that identified errors in earlier drafts and led to revisions of several results; and a
partial Lean~4 formalization of the results of Sections~\ref{sec:setting}--\ref{sec:frontier}.
The tool was not used to generate mathematical results. The authors reviewed and edited
all output of the tool, verified every mathematical statement independently, and take
full responsibility for the content of this publication.

\end{document}